\newtheorem*{thma}{Theorem~A}
\newtheorem*{thmb}{Theorem~B}
\newtheorem*{thmc}{Theorem~C}
\newtheorem*{thmd}{Theorem~D}
\newtheorem{thm}{Theorem}[section]
\newtheorem{cor}[thm]{Corollary}
\newtheorem{prop}[thm]{Proposition}
\newtheorem{fact}[thm]{Fact}
\newtheorem{lemma}[thm]{Lemma}
\newtheorem{claim}{Claim}[thm]
\theoremstyle{defn}
\newtheorem{defn}[thm]{Definition}
\newtheorem{conv}[thm]{Convention}
\theoremstyle{remark}
\newtheorem{remark}[thm]{Remark}
\newcommand\last[2]{\eth_{#1,#2}}
\newcommand\sq{\sqsubseteq}
\newcommand\s{\subseteq}
\newcommand\br{\blacktriangleright}
\newcommand{\forces}{\Vdash}
\renewcommand{\restriction}{\mathbin\upharpoonright}
\renewcommand\mid{\mathrel{|}\allowbreak}
\newcommand\diagonal{\bigtriangleup}
\newcommand\diagonaal{\bigtriangledown}
\newcommand\symdiff{\triangle}
\DeclareMathOperator{\id}{id}
\DeclareMathOperator{\ns}{NS}
\DeclareMathOperator{\reg}{Reg}
\DeclareMathOperator{\card}{Card}
\DeclareMathOperator{\cf}{cf}
\DeclareMathOperator{\cl}{cl}
\DeclareMathOperator{\Tr}{Tr}
\DeclareMathOperator{\tr}{tr}
\DeclareMathOperator{\im}{Im}
\DeclareMathOperator{\otp}{otp}
\DeclareMathOperator{\acc}{acc}
\DeclareMathOperator{\nacc}{nacc}
\DeclareMathOperator{\pr}{Pr}
\DeclareMathOperator{\pl}{P\ell}
\DeclareMathOperator{\stat}{stat}
\DeclareMathOperator{\ssup}{ssup}
\author{Assaf Rinot}
\address{Department of Mathematics, Bar-Ilan University, Ramat-Gan 5290002, Israel.}
\urladdr{http://www.assafrinot.com}
\author{Jing Zhang}
\address{Department of Mathematics, Bar-Ilan University, Ramat-Gan 5290002, Israel.}
\urladdr{https://jingjzzhang.github.io/}
\subjclass[2010]{Primary 03E02; Secondary 03E35.}
\title{Transformations of the transfinite plane}
\begin{document}
\begin{abstract} We study the existence of transformations of the transfinite plane that allow one to reduce Ramsey-theoretic statements concerning uncountable Abelian groups
into classical partition relations for uncountable cardinals.

To exemplify: we prove that for every inaccessible cardinal $\kappa$,
if $\kappa$ admits a stationary
set that does not reflect at inaccessibles, then the classical negative partition relation $\kappa\nrightarrow[\kappa]^2_\kappa$ implies that for every Abelian group $(G,+)$ of size $\kappa$, there exists a map $f:G\rightarrow G$
such that, for every  $X\s G$ of size $\kappa$ and every $g\in G$, 
there exist $x\neq y$ in $X$ such that $f(x+y)=g$. 
\end{abstract}
\maketitle
\section{Introduction}

Ramsey's theorem \cite{ramsey} asserts that every infinite graph contains an infinite induced subgraph which is either a clique or an anti-clique.
In other words, for every function (or \emph{coloring}, or \emph{partition}, depending on one's perspective) $c:[\mathbb N]^2\rightarrow 2$, there exists an infinite $X\s\mathbb N$ which is \emph{monochromatic} 
in the sense that, for some $i\in 2$, $c(x,y)=i$ for every pair $x<y$ of elements of $X$.
A strengthening of Ramsey's theorem due to Hindman \cite{MR0349574} concerns the additive structure $(\mathbb N,+)$ and asserts that for every partition $c:\mathbb N\rightarrow2$, there exists an infinite $X\s\mathbb N$ 
which is monochromatic in the  sense that, for some $i\in 2$, 
for every finite increasing sequence $x_0<\cdots<x_n$ of elements of $X$, $c(x_0+\cdots+x_n)=i$.

A natural generalization of Ramsey's and Hindman's theorems would assert that in any $2$-partition of an uncountable structure, there must exist an uncountable monochromatic subset.
However, this is not case. Already in the early 1930's, Sierpi\'nski found a coloring $c:[\mathbb R]^2\rightarrow2$ admitting no uncountable monochromatic set \cite{MR1556708}.
In contrast, a counterexample concerning the additive structure $(\mathbb R,+)$ was discovered only a few years ago \cite{MR3696151}, by Hindman, Leader and Strauss.

In this paper, we study the existence of transformations of the transfinite plane that allow one, among other things, to reduce the additive problem into the considerably simpler Ramsey-type problem.

Throughout the paper, $\kappa$ denotes a regular uncountable cardinal,
and $\theta,\chi$ denote (possibly finite) cardinals $\le\kappa$. The class of transformations of interest is captured by the following definition.

\begin{defn}\label{defpl1} $\pl_1(\kappa)$ asserts the existence of a transformation $\mathbf t:[\kappa]^2\rightarrow[\kappa]^2$ satisfying the following:
\begin{itemize}
\item for every $(\alpha,\beta)\in[\kappa]^2$, if $\mathbf t(\alpha,\beta)=(\alpha^*,\beta^*)$, then $\alpha^*\le\alpha<\beta^*\le\beta$;
\item for every family $\mathcal A$ consisting of $\kappa$ many pairwise disjoint finite subsets of $\kappa$,
there exists a stationary $S\s\kappa$ such that, for every pair $\alpha^*<\beta^*$ of elements of $S$,
there exists a pair $a<b$ of elements of $\mathcal A$ with $\mathbf t[a\times b]=\{(\alpha^*,\beta^*)\}$.
\end{itemize}
\end{defn}

\begin{thma} If $\pl_1(\kappa)$ holds,
then the following are equivalent:
\begin{itemize}
\item There exists a coloring $c:[\kappa]^2\rightarrow\theta$ such that, for every $X\s\kappa$ of size $\kappa$,
and every $\tau\in\theta$, there exist $x\neq y$ in $X$ such that $c(x,y)=\tau$;
\item For every Abelian group $(G,+)$ of size $\kappa$, there exists a coloring $c:G\rightarrow\theta$
such that, for all $X,Y\s G$ of size $\kappa$,
and every $\tau\in\theta$, there exist $x\in X$ and $y\in Y$ such that $c(x+y)=\tau$. 
\end{itemize}
\end{thma}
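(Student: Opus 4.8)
We prove the two implications separately; only the passage from the first statement to the second will use $\pl_1(\kappa)$.

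\emph{The second statement implies the first.} This direction does not need $\pl_1(\kappa)$. Fix any Abelian group operation $+$ on the set $\kappa$ (for instance, transport the structure of $\bigoplus_{\alpha<\kappa}\mathbb Z/2\mathbb Z$ along a bijection), and let $c:G\to\theta$ be the coloring supplied by the second bullet for $G:=(\kappa,+)$. Define $c':[\kappa]^2\to\theta$ by $c'(\{\alpha,\beta\}):=c(\alpha+\beta)$, which is well defined since $+$ is commutative. Given $X\s\kappa$ of size $\kappa$ and $\tau\in\theta$, partition $X$ into two disjoint sets $X_0,X_1$, each of size $\kappa$, and apply the hypothesis to the pair $(X_0,X_1)$ to obtain $x\in X_0$ and $y\in X_1$ with $c(x+y)=\tau$. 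Then $x\neq y$ and $c'(\{x,y\})=\tau$, as wanted.

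\emph{The first statement implies the second.} Fix a coloring $c:[\kappa]^2\to\theta$ as in the first bullet and a transformation $\mathbf t$ witnessing $\pl_1(\kappa)$. Let $(G,+)$ be an Abelian group of size $\kappa$; replacing $G$ by an isomorphic copy, we may assume that its underlying set is $\kappa$. The plan is to produce a coloring $d:G\to\theta$ together with a decoding scheme that, from any pair $a<b$ of finite subsets of $\kappa$ on which $\mathbf t$ is constant, extracts a witnessing pair. The natural template for $d$ is $d(z):=c(\mathbf t(p_z))$, where $p_z\in[\kappa]^2$ is a pair chosen so that $z$ is the $+$-sum of the two group elements indexed by the entries of $p_z$; the decoding should, given $a<b$, locate $x\in X$ and $y\in Y$ with $p_{x+y}\in a\times b$. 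Granting this, the argument runs as follows. Fix $X,Y\s G$ of size $\kappa$ and $\tau\in\theta$. Build a family $\mathcal A$ of $\kappa$ many pairwise disjoint finite subsets of $\kappa$ encoding $X$ and $Y$ so that the decoding above applies to each pair of members $a<b$ of $\mathcal A$. Apply the second clause of $\pl_1(\kappa)$ to $\mathcal A$ to obtain a stationary $S\s\kappa$. As $|S|=\kappa$, the choice of $c$ yields $\alpha^*<\beta^*$ in $S$ with $c(\{\alpha^*,\beta^*\})=\tau$. Now $\pl_1(\kappa)$ provides $a<b$ in $\mathcal A$ with $\mathbf t[a\times b]=\{(\alpha^*,\beta^*)\}$; let $x\in X$, $y\in Y$ be the pair decoded from $(a,b)$. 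Since $p_{x+y}\in a\times b$, we get $\mathbf t(p_{x+y})=(\alpha^*,\beta^*)$, hence $d(x+y)=c(\{\alpha^*,\beta^*\})=\tau$, as required.

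The crux — and the step I expect to be the main obstacle — is to make the definition of $d$ and the encoding/decoding scheme consistent in the presence of the genuinely many-to-one behaviour of the sum map $(x,y)\mapsto x+y$ on $G$: a single group element $z$ has many representations $z=x+y$, so the selection $z\mapsto p_z$ must be arranged, uniformly over all large $X$ and $Y$, to be compatible with the members of $\mathcal A$. This is exactly where the strength of $\pl_1(\kappa)$ is used: $\mathbf t$ collapses an \emph{entire} finite combinatorial rectangle $a\times b$ to a single point, and the slack in $a$ and $b$ is what absorbs the ambiguity in the representations of sums and lets the decoding find some $p_{x+y}$ inside $a\times b$. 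Some further care is needed to ensure that $\mathcal A$ can be taken with pairwise disjoint members (thin out $X$ and $Y$ first) and that the selection $z\mapsto p_z$ is compatible with the order constraint $\alpha^*\le\alpha<\beta^*\le\beta$ built into $\mathbf t$.
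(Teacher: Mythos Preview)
Your backward direction is correct and essentially the paper's argument; the paper uses the concrete group $([\kappa]^{<\omega},\symdiff)$, which lets one write $c'(x,y):=d(\{x,y\})$ directly without splitting $X$ into two pieces, but your version works just as well.

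The forward direction, however, is not a proof but a plan with its main step missing, and you say so yourself. The coloring $d$ must be defined before $X$ and $Y$ are given, so the selection $z\mapsto p_z$ is fixed once and for all; yet for every later choice of $X,Y$ and the resulting family $\mathcal A$, you need every eligible pair $(a,b)\in[\mathcal A]^2$ to contain the \emph{particular} preselected $p_{x+y}$ for some $x\in X$, $y\in Y$. Your paragraph about ``the slack in $a$ and $b$ absorbing the ambiguity'' does not explain how to arrange this, and it is exactly here that all the content lies. Nothing you have written constrains $p_z$ or $\mathcal A$ enough to make the decoding succeed.

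The paper sidesteps this by factoring the implication. First it observes (your $c\circ\mathbf t$, without the group) that $\pl_1(\kappa)$ together with $\kappa\nrightarrow[\kappa]^2_\theta$ yields $\pr_1(\kappa,\kappa,\theta,\omega)$: define $c^*(\alpha,\beta):=c(\alpha^*,\beta^*)$ where $\mathbf t(\alpha,\beta)=(\alpha^*,\beta^*)$; given any $\mathcal A$, the stationary set $S$ supplied by $\pl_1(\kappa)$ has size $\kappa$, so $c$ attains the desired colour on $[S]^2$, and the rectangle $a\times b$ produced by $\pl_1$ is then $c^*$-monochromatic. The passage from $\pr_1(\kappa,\kappa,\theta,\omega)$ to the Abelian-group statement is then quoted as a black box from \cite[Theorem~4.2]{paper27}. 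So the group-theoretic encoding/decoding you are trying to improvise is in fact a stand-alone (and nontrivial) theorem; if you want a self-contained proof you must actually carry it out, not gesture at it.
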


As the proof of Theorem~A will make clear, the theorem remains valid even after relaxing Definition~\ref{defpl1}
to omit the first bullet and to weaken ``stationary $S\s\kappa$'' into ``cofinal $S\s\kappa$''.
The reason we have added these extra requirements is to connect this line of investigation with other well-known problems,
such as the problem of whether the product of any two $\kappa$-cc posets must be $\kappa$-cc (cf.~\cite{paper18}):
\begin{thmb}  If $\pl_1(\kappa)$ holds, then for every positive integer $n$ there exists a poset $\mathbb P$ such that $\mathbb P^n$ satisfies the $\kappa$-cc, but $\mathbb P^{n+1}$ does not.
\end{thmb}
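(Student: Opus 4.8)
The plan is to fix a positive integer $n$ together with an array $\mathbf t$ witnessing $\pl_1(\kappa)$, and to manufacture $\mathbb P=\mathbb P_n$ as a poset of finite ``$\mathbf t$-coherent'' conditions. Alongside $\mathbf t$ I would fix a partition $\langle S_i:i\le n\rangle$ of $\kappa$ into $n+1$ stationary sets (available since $\kappa$ is regular and uncountable); these serve to encode ``which coordinate of a product is responsible'' for a given clash. The conditions of $\mathbb P$ are finite (possibly labelled) subsets $p$ of $\kappa$ subject to a coherence constraint referring only to $\mathbf t\restriction[p]^2$ and to the $S_i$'s, ordered by end-extension/reverse inclusion so that the constraint is inherited by sub-conditions, making $\mathbb P$ a genuine notion of forcing. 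The two design goals for the constraint are: (G1) any two conditions $p,q$ of the same finite ``shape'' on which $\mathbf t$ takes a single value are compatible in $\mathbb P$; and (G2) for each $i\le n$ there is a $\kappa$-sequence $\langle q^i_\xi:\xi<\kappa\rangle$ of conditions such that, for $\xi\ne\eta$, the conditions $q^i_\xi$ and $q^i_\eta$ are incompatible in $\mathbb P$ exactly when a designated ordinal $\delta(\xi,\eta)$ --- computed from $\mathbf t$ on the relevant data --- lands in $S_i$.

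Granting such a $\mathbb P$, the two halves of Theorem~B are then straightforward. For the failure of the $\kappa$-cc in $\mathbb P^{n+1}$, consider $\langle(q^0_\xi,\dots,q^n_\xi):\xi<\kappa\rangle$: given $\xi\ne\eta$, the ordinal $\delta(\xi,\eta)$ belongs to exactly one block $S_i$, whence $q^i_\xi\perp q^i_\eta$ by (G2), so the two $(n+1)$-tuples are incompatible in $\mathbb P^{n+1}$ and we have a $\kappa$-sized antichain. (It is here that the first bullet of Definition~\ref{defpl1}, $\alpha^*\le\alpha<\beta^*\le\beta$, is used, to guarantee that the $q^i_\xi$ are legitimate conditions and that the clash is correctly localized.) For the $\kappa$-cc of $\mathbb P^n$, let $\langle(p^0_\xi,\dots,p^{n-1}_\xi):\xi<\kappa\rangle$ be given; after the usual $\Delta$-system reduction (handling the root separately, then re-indexing along a club so that the sets $a_\xi:=\bigcup_{l<n}p^l_\xi$ are pairwise disjoint and $<$-increasing) we may assume they all share one common shape. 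Apply $\pl_1(\kappa)$ to $\mathcal A:=\{a_\xi:\xi<\kappa\}$ to get a stationary $S\s\kappa$; the set of ``exceptional'' target pairs determined by the fixed shape is non-stationary, so stationarity of $S$ lets us choose $\alpha^*<\beta^*$ in $S$ avoiding it. Then $\pl_1(\kappa)$ yields $\xi\ne\eta$ with $\mathbf t[a_\xi\times a_\eta]=\{(\alpha^*,\beta^*)\}$, so $\mathbf t$ is single-valued on each $p^l_\xi\times p^l_\eta$, and (G1) gives $p^l_\xi\cup p^l_\eta\in\mathbb P$ for every $l<n$; thus the two $n$-tuples are compatible in $\mathbb P^n$ --- contradiction.

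The main obstacle is the simultaneous fulfilment of (G1) and (G2): the former pulls toward a ``tolerant'' constraint, under which single-valuedness of $\mathbf t$ on a product already forces compatibility, while the latter demands a constraint rich enough that clashes can be produced and, crucially, spread across \emph{all} $n+1$ coordinates rather than being absorbable by $n$ of them --- which would destroy the $\kappa$-cc of $\mathbb P^n$. Concretely, one must arrange that each fixed shape, once its cross-pairs with another condition are governed by a single value $(\alpha^*,\beta^*)$, implicates at most $n$ of the blocks $S_i$ --- so that an uncontrolled stationary $S$ coming out of $\pl_1(\kappa)$ still contains a usable pair $(\alpha^*,\beta^*)$ --- whereas the $n+1$ ``diagonal'' families of (G2) between them sweep out all $n+1$ blocks. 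This balancing act, together with the verification of downward closure of the constraint, is the combinatorial heart of the argument. It parallels the known constructions producing $\kappa$-cc-non-productive forcings from strong colorings (cf.~\cite{paper18}), with the weak ``stationarily many colors, depending on the family'' output of $\pl_1(\kappa)$ replacing the usual $\pr_1$-type hypothesis and the first bullet of Definition~\ref{defpl1} supplying the interval localization that makes this substitution work; alternatively, if one first distils from $\mathbf t$ a coloring of $[\kappa]^2$ with the requisite strong unboundedness, one may quote the relevant product chain-condition theorem of \cite{paper18} directly.
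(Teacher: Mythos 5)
Your main plan (Track A, the direct ``coherent conditions'' poset built from $\mathbf t$ and a partition $\langle S_i:i\le n\rangle$) is not carried through: you explicitly flag the simultaneous satisfaction of (G1) and (G2) as ``the combinatorial heart of the argument'' and then leave it unresolved. That is not a harmless omission --- it is exactly the place where the proof has to be done, and your text offers no candidate coherence constraint, so there is nothing to check. Moreover the balancing you describe (``each fixed shape implicates at most $n$ of the blocks $S_i$'') is not something you can impose by fiat on an arbitrary $\mathbf t$ witnessing $\pl_1(\kappa)$: the hypothesis says nothing about how the single value $(\alpha^*,\beta^*)$ interacts with a pre-chosen stationary partition, so there is no evident mechanism forcing $\delta(\xi,\eta)$ to sweep all $n+1$ blocks for the diagonal families while staying within $n$ of them for generic products.

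The parenthetical at the end is in fact the paper's actual route, but you treat the key step as a throwaway. The paper does \emph{not} build a poset from $\mathbf t$ directly. It first distils a genuine coloring, and here the work is nontrivial: $\pl_1(\kappa)$ does not hand you a coloring, only a transformation, and to manufacture colors one needs a separate source of ``strong'' coloring behaviour. The paper gets this by showing (Lemma~\ref{lemma212}) that $\pl_1(\kappa,1,2)$ forces the $C$-sequence number $\chi(\kappa)$ to exceed $1$, then invoking Todorcevic's oscillation theorem (Fact~\ref{oscfact}) to obtain $\kappa\nrightarrow[\kappa]^2_\omega$, and finally composing through $\mathbf t$ as in Lemma~\ref{thm214}(2) to obtain $\pr_1(\kappa,\kappa,\omega,\omega)$ (this is Corollary~\ref{cor216}). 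Only then does one get a coloring $c:[\kappa]^2\to n+1$ witnessing $\pr_1(\kappa,\kappa,n+1,\omega)$ and define the simple poset $\mathbb P$ with conditions $(i,x)$ where $i<n+1$, $x\in[\kappa]^{<\omega}$ and $i\notin c``[x]^2$, ordered by coordinatewise agreement and end-extension; the antichain in $\mathbb P^{n+1}$ consists of the tuples $\langle(i,\{\alpha\})\mid i<n+1\rangle$, and the $\kappa$-cc of $\mathbb P^n$ follows by pigeonholing on the $n$-tuple of color-indices, choosing a spare color $i^*$, and applying $\pr_1$ with $\chi=\omega$ after a $\Delta$-system refinement. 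Your phrase ``distil a coloring with the requisite strong unboundedness'' elides precisely the $\chi(\kappa)>1$ argument, which is the step where $\pl_1(\kappa)$ is actually leveraged; without it you have no coloring at all, and the rest of Track B cannot start.

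So: Track A is a sketch with an unfilled hole at its centre, and Track B names the right destination but skips the one genuinely nontrivial leg of the journey. To repair the proof you should prove (or cite) that $\pl_1(\kappa,1,2)$ implies $\chi(\kappa)>1$, deduce $\kappa\nrightarrow[\kappa]^2_\omega$, upgrade to $\pr_1(\kappa,\kappa,\omega,\omega)$ via the composition $c^*(\alpha,\beta):=c(\alpha^*,\beta^*)$ where $\mathbf t(\alpha,\beta)=(\alpha^*,\beta^*)$, and then run the standard product-forcing construction from $\pr_1(\kappa,\kappa,n+1,\omega)$.
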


Now, to formulate the main results of this paper, let us consider a more informative variation of $\pl_1(\kappa)$.
\begin{defn}\label{fulldefpl1} $\pl_1(\kappa,\theta,\chi)$ asserts the existence of a function $\mathbf t:[\kappa]^2\rightarrow[\kappa]^3$ satisfying the following:
\begin{itemize}
\item for all $(\alpha,\beta)\in[\kappa]^2$, if $\mathbf t(\alpha,\beta)=(\tau^*,\alpha^*,\beta^*)$, then $\tau^*\le\alpha^*\le\alpha<\beta^*\le\beta$;
\item for all $\sigma<\chi$ and every family $\mathcal A\s [\kappa]^{\sigma}$
consisting of $\kappa$ many pairwise disjoint sets,
there exists a stationary $S\s\kappa$ such that, for all $(\alpha^*,\beta^*)\in[S]^2$ and $\tau^*<\min\{\theta,\alpha^*\}$,
there exist $(a,b)\in[\mathcal A]^2$ with $\mathbf t[a\times b]=\{(\tau^*,\alpha^*,\beta^*)\}$.
\end{itemize}
\end{defn}
\begin{remark} $\pl_1(\kappa)$ of Definition~\ref{defpl1} is $\pl_1(\kappa,1,\aleph_0)$.
\end{remark}

In \cite{paper13}, by building on the work of Eisworth in \cite{MR3087058,MR3087059}, 
the first author proved that $\pl_1(\lambda^+,\cf(\lambda),\cf(\lambda))$ holds for every singular cardinal $\lambda$.\footnote{The first bullet
of Definition~\ref{fulldefpl1} is not stated explicitly, but may be verified to hold in all the relevant arguments of \cite{MR3087058,MR3087059,paper13}.}
The proof of that theorem was a combination of walks on ordinals, club-guessing considerations, applications of elementary submodels, and oscillation of {\it pcf} scales.
Here, we replace the last ingredient by the oscillation oracle $\pl_6(\ldots)$ from \cite{paper15},
and there are a few additional differences which are too technical to state at this point.

The main result of this paper reads as follows:
\begin{thmc} For $\chi=\cf(\chi)\ge\omega$, $\pl_1(\kappa,\theta,\chi)$ holds in any of the following cases:
\begin{enumerate}
\item $\chi<\chi^+<\theta=\kappa$ and $\square(\kappa)$ holds;
\item $\chi<\chi^+<\theta=\kappa$ and $E^\kappa_{\ge\chi}$ admits a stationary set that does not reflect;
\item $\chi<\chi^+=\theta<\kappa$, $\kappa$ is inaccessible, and $E^\kappa_{\ge\chi}$ admits a stationary set that does not reflect at inaccessibles.
\end{enumerate}
\end{thmc}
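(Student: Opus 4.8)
The plan is to construct the transformation $\mathbf t$ by walking on ordinals along a $C$-sequence $\vec C=\langle C_\alpha\mid\alpha<\kappa\rangle$ chosen to fit each hypothesis, while reading off the color coordinate $\tau^*$ from a coloring supplied by the oscillation oracle $\pl_6(\ldots)$ of \cite{paper15}. First I would isolate a single abstract input that suffices for the whole construction: a nontrivial $C$-sequence $\vec C$ that is \emph{amenable} and carries a club-guessing sequence $\langle x_\delta\mid\delta\in S_0\rangle$ with guessing points in $E^\kappa_{\ge\chi}$ (and, in case~(3), at inaccessibles), together with an instance of $\pl_6(\ldots)$ whose parameters match $(\kappa,\theta,\chi)$. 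The three cases then become three ways of securing this input. Case~(1) runs through coherence: a $\square(\kappa)$-sequence is coherent and amenable, and the walks along it carry enough built-in guessing to manufacture $S_0$. Cases~(2) and~(3) run through non-reflection: starting from the given non-reflecting stationary subset $T$ of $E^\kappa_{\ge\chi}$, thin it, close it off, and interleave it with a ZFC club-guessing sequence on $E^\kappa_{\ge\chi}$ to obtain $\vec C$ and $S_0$ with $S_0\s T$; in case~(3) the inaccessibility of $\kappa$ is exactly what lets the guessing points be taken inaccessible, which is why only non-reflection at inaccessibles is needed there. In every case $\pl_6(\ldots)$ with the right parameters is available by the results of \cite{paper15}.

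With $\vec C$ fixed, set up the walk $\Tr(\cdot,\cdot)$ together with its trace and the usual characteristics, writing $\last{\alpha}{\beta}$ for the last step of the walk from $\beta$ down to $\alpha$. Define $\mathbf t(\alpha,\beta):=(\tau^*,\alpha^*,\beta^*)$ by taking $\beta^*:=\last{\alpha}{\beta}$, so that $\alpha<\beta^*\le\beta$; taking $\alpha^*$ to be an ordinal $\le\alpha$ read off from $C_{\beta^*}$ below $\alpha$ (say, $\sup(C_{\beta^*}\cap\alpha)$ possibly followed by a further descent step, arranged so that $\alpha^*\le\alpha$); and taking $\tau^*$ to be the color assigned by the $\pl_6$-coloring to the relevant fiber data of the walk, which by the choice of parameters falls below $\min\{\theta,\alpha^*\}$, so that $\tau^*\le\alpha^*$. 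The first bullet of Definition~\ref{fulldefpl1} then holds by inspection.

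For the second bullet, fix $\sigma<\chi$ and a pairwise-disjoint $\mathcal A\s[\kappa]^\sigma$ of size $\kappa$. After passing to a subfamily of full size (still pairwise disjoint), I would arrange that the blocks are so well-separated that, for every $(a,b)\in[\mathcal A]^2$, the relevant portions of the walks from elements of $b$ to elements of $a$ depend only on the pair $(a,b)$ and not on the chosen representatives, and that the sups $\gamma_a:=\sup(a)$ enumerate a set whose closure is a club $D\s\kappa$. Feeding $D$ to the club-guessing sequence produces a stationary $S\s S_0$ such that, for each $\delta\in S$, cofinally many $a\in\mathcal A$ have $\gamma_a$ in the portion of $D$ that is guessed inside $\delta$. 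Given $(\alpha^*,\beta^*)\in[S]^2$ and $\tau^*<\min\{\theta,\alpha^*\}$: using the guessing at $\beta^*$, pick $b\in\mathcal A$ whose walks to ordinals below $\beta^*$ all visit $\beta^*$ as a step with an $\alpha^*$-directed continuation; using the guessing at $\alpha^*$, pick $a\in\mathcal A$ below $\alpha^*$ so that this continuation selects $\alpha^*$. By the separation arrangement, $\last{\alpha'}{\beta'}=\beta^*$ and the read-off ordinal equals $\alpha^*$ for every $(\alpha',\beta')\in a\times b$, so the last two coordinates of $\mathbf t$ are constantly $(\alpha^*,\beta^*)$ on $a\times b$. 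It remains to choose $a$ and $b$ so that, simultaneously, the $\pl_6$-coloring is constantly $\tau^*$ on $a\times b$; for this I would invoke the defining clause of $\pl_6(\ldots)$ applied to the subfamily of blocks left available below $\alpha^*$ by the previous step, a family of precisely the shape on which $\pl_6$ grants a color constant on a product of two of its members, steerable to any prescribed value below $\theta$.

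I expect the crux to be this last coordination: forcing the oscillation value to be constant on the whole rectangle $a\times b$, for every admissible target $\tau^*$, all with a single stationary $S$. This dictates a careful order of operations — the thinning of $\mathcal A$ that secures the walk-theoretic control of $(\alpha^*,\beta^*)$ must be compatible with the sub-thinning on which $\pl_6$ then acts, and conversely the $\pl_6$-extraction must not disturb the walk data already pinned down. In case~(3) there is the additional, genuinely delicate point that the club-guessing, the failure of reflection, and the inaccessibility of $\kappa$ must be made to meet at the guessing points; once the right $\vec C$ is fixed this is bookkeeping, but it is where the hypothesis of that case is really used. Cases~(1) and~(2) should then go through along the same lines with the correspondingly simpler choice of $\vec C$.
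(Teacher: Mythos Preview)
Your proposed definition of $\mathbf t$ cannot work as stated. You set $\beta^*:=\last{\alpha}{\beta}$, but by Lemma~\ref{last}(2), whenever $\last{\alpha}{\beta}>\alpha$ one has $\alpha\in\acc(C_{\beta^*})$. Any ``read off from $C_{\beta^*}$ below $\alpha$'' such as $\sup(C_{\beta^*}\cap\alpha)$ then equals $\alpha$ itself, so your $\alpha^*$ varies across the block $a$ rather than being constant on $a\times b$; the scheme collapses already for $|a|=2$. More generally, insisting that $\beta^*$ be the \emph{last} step of the walk pins $a$ inside $\acc(C_{\beta^*})$ and leaves no room for an $\alpha^*$-mechanism that is both $\le\alpha$ and independent of which $\alpha\in a$ was chosen.

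The paper's construction avoids this by giving $\pl_6$ a much more central role than supplying $\tau^*$. One first encodes information into a function $h:\kappa\to\mu$ (with $\mu=\chi^+$) so that the projected trace $\tr_h(\alpha,\beta)=h\circ\tr(\alpha,\beta)$ lies in ${}^{<\omega}\mu$; the oscillation map $d$ from $\pl_6(\mu,\chi)$ is then applied to this projected trace and returns an \emph{index} $n$ into the walk, together with auxiliary data. One sets $\beta^*:=\Tr(\alpha,\beta)(n)$, an \emph{intermediate} step of the walk chosen by the oracle, and then recovers an ordinal $\eta$ (via $\lambda(\beta^*,\beta)$ or via a coding function $g$) which launches a \emph{second} walk $\Tr(\eta+1,\alpha)$; the ordinal $\alpha^*$ is a prescribed step of that second walk. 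The stationary set $S$ is not produced directly by club-guessing but is of the form $S_\eta$ (those $\epsilon$ that, for cofinally many targets, sit at the right position in walks with $\lambda$-value exactly $\eta$); proving $S_\eta$ stationary for some $\eta$ is where club-guessing and the case hypotheses are actually spent. The three cases differ in how the $C$-sequence is built (coherent from $\square(\kappa)$; avoiding a nonreflecting set; or, in case~(3), a swallowing construction that absorbs a club-guessing sequence on $S^1\subseteq E^\kappa_{\sigma^1}$ and uses non-saturation of an associated ideal), but the transformation template and the use of $\pl_6$ to \emph{select the pivot step} are uniform. Your plan treats $\pl_6$ as a post-hoc color assignment; in the paper it is the device that makes $\beta^*$ (and, through the second walk, $\alpha^*$) constant on the rectangle.
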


By the results of Subsection~\ref{subsectionpr1} below, the principle $\pl_1(\kappa,\theta,\chi)$ is strictly stronger than Shelah's principle $\pr_1(\kappa,\kappa,\theta,\chi)$.
Thus, Clause~(1) improves the main result of \cite{paper18} and Clause~(2) improves the main result of \cite{paper15}.
The result of Clause~(3) provides, in particular, an affirmative answer to a question posed by Eisworth to the first author at the \emph{Set Theory} meeting in Oberwolfach, January 2014.

We conclude the introduction, mentioning two findings in the other direction.
\begin{thmd}For a strongly inaccessible cardinal $\kappa$: \begin{enumerate}
\item the existence of a coherent $\kappa$-Souslin tree does not imply $\pl_1(\kappa)$;
\item for any $\chi\in\reg(\kappa)$, the existence of a nonreflecting stationary subset of $E^\kappa_\chi$
does not imply $\pl_1(\kappa,1,\chi^+)$.
\end{enumerate}
\end{thmd}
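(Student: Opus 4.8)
The plan is, for both clauses, to start from a model $V$ in which $\kappa$ is weakly compact and to introduce the relevant combinatorial object by forcing. Weak compactness makes the \emph{target} principle fail in $V$: by Theorem~B with $n=1$, $\pl_1(\kappa)$ yields a poset $\mathbb{P}$ with $\mathbb{P}$ $\kappa$-cc and $\mathbb{P}\times\mathbb{P}$ not $\kappa$-cc, whereas the relation $\kappa\rightarrow(\kappa)^2_2$ that accompanies weak compactness promotes every $\kappa$-cc poset to a $\kappa$-Knaster poset, and $\kappa$-Knasterness is productive; also, every stationary subset of $\kappa$ reflects. Since $\pl_1(\kappa,1,\chi^+)\Rightarrow\pl_1(\kappa)=\pl_1(\kappa,1,\aleph_0)$ (using that $\chi\in\reg(\kappa)$ is infinite), both principles under consideration fail in $V$. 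The point of friction is that weak compactness \emph{itself} cannot be preserved, since a coherent $\kappa$-Souslin tree, as well as a non-reflecting stationary subset of $E^\kappa_\chi$, destroys it; so in each clause one must isolate a consequence of the target principle that the object-adjoining forcing cannot manufacture, and keep its negation alive.

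For Clause~(1): over $V$ (after a routine $\gch$-type preparation) force with the standard $({<}\kappa)$-directed-closed, $\kappa^+$-cc poset $\mathbb{Q}$ whose generic object is a coherent $\kappa$-Souslin tree $T$; the usual sealing-of-maximal-antichains argument, using the $({<}\kappa)$-closure of $\mathbb{Q}$ together with $2^{<\kappa}=\kappa$, makes the generic tree Souslin rather than merely Aronszajn, and the construction can be set up so that $T$ is \emph{free} (all of its finite derived powers are Souslin). Thus $T$ witnesses the hypothesis of Clause~(1) in $V^{\mathbb{Q}}$, while $T$ itself, regarded as a notion of forcing, is $\kappa$-cc with $\kappa$-cc powers and so does \emph{not} refute the productivity of the $\kappa$-cc. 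The heart of the matter is to argue that $\mathbb{Q}$ introduces no $\kappa$-cc poset with a non-$\kappa$-cc square, so that ``the $\kappa$-cc is productive'', hence (by Theorem~B) $\neg\pl_1(\kappa)$, persists into $V^{\mathbb{Q}}$. I would obtain this either by a direct preservation analysis for $\mathbb{Q}$, or, more robustly, by carrying out the construction over a ground model, produced from a somewhat larger large cardinal, in which ``the $\kappa$-cc is productive'' has been rendered indestructible under $({<}\kappa)$-directed-closed forcing of size $\le\kappa$.

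For Clause~(2): over $V$ (again with a $\gch$-preparation) force with the standard highly closed, $\kappa^+$-cc poset $\mathbb{S}$ of size $\kappa$ whose generic object is a non-reflecting stationary subset $T$ of $E^\kappa_\chi$; as $\chi\in\reg(\kappa)$ and $\kappa$ is strongly inaccessible, $\mathbb{S}$ keeps $\kappa$ strongly inaccessible, and $T$ witnesses the hypothesis of the clause in $V^{\mathbb{S}}$. Here the invariant must be finer than in Clause~(1): by Theorem~C(2) (applicable as $\chi^+<\kappa$), the set $T$ already yields the weaker principle $\pl_1(\kappa,\kappa,\chi)$, so in particular ``the $\kappa$-cc is not productive'' becomes true in $V^{\mathbb{S}}$, and it is precisely the jump from the parameter $\chi$ to $\chi^+$ that has to be blocked. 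The plan is to show that $\pl_1(\kappa,1,\chi^+)$ entails a failure of stationary reflection at cofinalities $\ge\chi^+$ --- the amalgamation of pairs of sets of size $\chi$ that the principle demands cannot be carried out when every stationary subset of $E^\kappa_{\ge\chi^+}$ reflects --- and then to verify that $\mathbb{S}$, thanks to its closure, preserves ``every stationary subset of $E^\kappa_{\ge\chi^+}$ reflects'' from the weakly compact ground model; together these force $\neg\pl_1(\kappa,1,\chi^+)$ in $V^{\mathbb{S}}$.

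The main obstacle, in both clauses, is exactly this persistence step. For Clause~(1) it is showing that a $({<}\kappa)$-closed forcing --- which adds new subsets of $\kappa$, hence potentially new notions of forcing on $\kappa$ --- cannot spoil the productivity of the $\kappa$-cc; I expect this to call for an indestructibility preparation rather than a bare-hands argument. For Clause~(2) it is the two-sided task of pinning down the exact non-reflection consequence of $\pl_1(\kappa,1,\chi^+)$ at cofinality $\chi^+$ and matching it against the reflection that $\mathbb{S}$ is engineered to protect (in particular, ensuring $\mathbb{S}$ does not itself add a non-reflecting stationary subset of $E^\kappa_{\ge\chi^+}$). The $\gch$-preparations and cardinal-arithmetic bookkeeping are routine by comparison.
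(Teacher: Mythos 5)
The approach diverges from the paper's and the chosen invariants cannot work; the gap is in the persistence step, which you correctly flagged as the obstacle but did not overcome.

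For Clause~(1), your proposed invariant ``the $\kappa$-cc is productive'' is unavailable in \emph{any} model containing a coherent $\kappa$-Souslin tree. A coherent $\kappa$-Souslin tree already yields $\pr_1(\kappa,\kappa,\kappa,\omega)$, and the construction in the proof of Theorem~B uses only $\pr_1(\kappa,\kappa,n+1,\omega)$ (via Corollary~\ref{cor216}), not the full strength of $\pl_1(\kappa)$. So the target model necessarily contains a $\kappa$-cc poset with a non-$\kappa$-cc square -- making the generic tree free does not help, since the offending poset is built from the $\rho$-coloring of the coherent tree, not from the tree itself. The separation cannot be at the level of $\kappa$-cc productivity. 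What the paper actually preserves is the $C$-sequence number $\chi(\kappa)=1$ through the Kunen-style forcing revisited in \cite[\S3]{paper35}, and Lemma~\ref{lemma212} is the bridge: $\chi(\kappa)\le1$ refutes $\pl_1(\kappa,1,2)$. This coexists with $\pr_1(\kappa,\kappa,\kappa,\omega)$, which is exactly the fine line your invariant misses.

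For Clause~(2), the paper again outsources the hard forcing work to \cite[\S3]{paper35}, which produces a model with a nonreflecting stationary subset of $E^\kappa_\chi$ in which $\pr_1(\kappa,\kappa,\kappa,\chi^+)$ \emph{fails}; Fact~\ref{nonreftm} supplies $\kappa\nrightarrow[\stat(\kappa)]^2_\kappa$, and Lemma~\ref{thm214}(2) then shows that $\pl_1(\kappa,1,\chi^+)$ would force $\pr_1(\kappa,\kappa,\kappa,\chi^+)$ to hold -- contradiction. Your proposed invariant, ``every stationary subset of $E^\kappa_{\ge\chi^+}$ reflects,'' is not established to be implied by $\neg\pl_1(\kappa,1,\chi^+)$, and I do not see how to derive ``$\pl_1(\kappa,1,\chi^+)$ entails failure of stationary reflection at cofinalities $\ge\chi^+$''; nothing in the paper suggests such an implication, and proving it would be a new and nontrivial result. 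Moreover, the routine poset $\mathbb{S}$ adding a nonreflecting stationary subset of $E^\kappa_\chi$ is not obviously the right forcing here: the models of \cite[\S3]{paper35} are engineered specifically to kill the relevant $\pr_1$ instance, which is the actual invariant. In short, both clauses are reductions to preserved negative $\pr_1$/$C$-sequence-number facts from \cite{paper35}, not preservation of productivity or reflection, and without that identification the persistence step -- which you rightly single out -- remains open.
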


\subsection{Organization of this paper} In Section~\ref{section1}, we establish some facts about walks on ordinals,
and present a connection between $\pl_1(\kappa,\ldots)$ and two other concepts: the coloring principle $\pr_1(\kappa,\ldots)$ 
and the $C$-sequence number, $\chi(\kappa)$.
The proofs of Theorems A, B and $D$ will be found there.

In Section~\ref{pl6section}, we prove that a strong form of the oscillation oracle $\pl_6(\nu^+,\nu)$ holds for any infinite regular cardinal $\nu$.
This fact will play a role in the later sections.

In Section~\ref{mainthm5}, we provide a proof of Clause~(2) of Theorem~C.
The proof is split into two cases:  $\kappa>\chi^{++}$ and $\kappa=\chi^{++}$.

In Section~\ref{Squaresection}, we provide a proof of Clause~(1) of Theorem~C. 

In Section~\ref{lastclause}, we provide a proof of Clause~(3) of Theorem~C. 

\subsection{Further results}
In an upcoming paper \cite{paper45}, we address the validity of the strongest possible instances of $\pl_1(\kappa,\theta,\chi)$.
Some of the main findings are:
\begin{itemize}
\item $\pl_1(\lambda^+,1,\lambda)$ fails for $\lambda$ singular,
so that Theorem~C is optimal whenever $\kappa$ is a successor of a singular cardinal;
\item $\pl_1(\lambda^+,1,\lambda^+)$ fails for $\lambda$ regular;
\item $\pl_1(\lambda^+,\lambda^+,\lambda)$ holds for $\lambda$ regular satisfying $2^\lambda=\lambda^+$;
\item $\pl_1(\aleph_1,\aleph_1,n)$ holds for all positive integers $n$;
\item $\pl_1(\kappa,\kappa,\kappa)$ holds for $\kappa$ inaccessible such that $\square(\kappa)$ and $\diamondsuit^*(\kappa)$ both hold.
\end{itemize}

\subsection{Notation and conventions}
Let $E^\kappa_\chi:=\{\alpha < \kappa \mid \cf(\alpha) = \chi\}$,
and define $E^\kappa_{\le \chi}$, $E^\kappa_{<\chi}$, $E^\kappa_{\ge \chi}$, $E^\kappa_{>\chi}$,  $E^\kappa_{\neq\chi}$ analogously.
For an ideal $\mathcal I$ over $\kappa$, we write $\mathcal I^+:=\mathcal P(\kappa)\setminus\mathcal I$.
The collection of all sets of hereditary cardinality less than $\kappa$ is denoted by $\mathcal H_\kappa$.
The set of all infinite (resp.~infinite and regular) cardinals below $\kappa$
is denoted by $\card(\kappa)$ (resp.~$\reg(\kappa)$).
The length of a finite sequence $\varrho$ is denoted by $\ell(\varrho)$.
For a subset $S\s\kappa$, we let $\Tr(S):=\{\alpha\in E^\kappa_{>\omega}\mid S\cap\alpha\text{ is stationary in }\alpha\}$;
we say that $S$ is \emph{nonreflecting} (resp.\ \emph{nonreflecting at inaccessibles}) iff $\Tr(S)$ is empty (resp.\ contains no inaccessible cardinals).
For a set of ordinals $a$, we write 
$\ssup(a):=\sup\{\alpha+1\mid \alpha\in a\}$,
$\acc^+(a) := \{\alpha < \ssup(a) \mid \sup(a \cap \alpha) = \alpha > 0\}$,
$\acc(a) := a \cap \acc^+(a)$, $\nacc(a) := a \setminus \acc(a)$,
and $\cl(a):= a\cup\acc^+(a)$.
For sets of ordinals, $a$ and $b$, we let $a\circledast b:=\{(\alpha,\beta)\in a\times b\mid \alpha<\beta\}$,
and write $a < b$ to express that $a\times b$ coincides with $a\circledast b$.

For any set $\mathcal A$, we write
$[\mathcal A]^\chi:=\{ \mathcal B\s\mathcal A\mid |\mathcal B|=\chi\}$ and
$[\mathcal A]^{<\chi}:=\{\mathcal B\s\mathcal A\mid |\mathcal B|<\chi\}$.
This convention admits two refined exceptions:
\begin{itemize}
\item For an ordinal $\sigma$ and a set of ordinals $A$, we write 
$[A]^\sigma$ for $\{ B\s A\mid \otp(B)=\sigma\}$;
\item For a set $\mathcal{A}$ which is either an ordinal or a collection of sets
of ordinals, we interpret $[\mathcal{A}]^2$ as the collection of \emph{ordered} pairs $\{ (a,b)\in\mathcal A\times\mathcal A\mid a<b\}$.
\end{itemize}
In particular, $[\kappa]^2=\{(\alpha,\beta)\mid \alpha<\beta<\kappa\}$.
Likewise, we let $[\kappa]^3:=\{(\alpha,\beta,\gamma)\in\kappa\times\kappa\times\kappa\mid \alpha<\beta<\gamma<\kappa\}$.

\section{Warming up}\label{section1}

\subsection{The foundations of walks on ordinals}\label{subsectionwalks}
\begin{defn}[folklore] $\kappa\nrightarrow[\kappa]^2_\theta$ 
(resp.~$\kappa\nrightarrow[\stat]^2_\theta$)
asserts the existence of a coloring $c:[\kappa]^2\rightarrow\theta$ such that, for every cofinal (resp.~stationary) $X\s\kappa$,
and every $\tau\in\theta$, there exist $(x,y)\in[X]^2$ such that $c(x,y)=\tau$.

Likewise, $\kappa\nrightarrow[\kappa;\kappa]^2_\theta$ 
(resp.~$\kappa\nrightarrow[\stat;\stat]^2_\theta$)
asserts the existence of a coloring $c:[\kappa]^2\rightarrow\theta$ such that, for every two cofinal (resp.~stationary) $X,Y\s\kappa$,
and every $\tau\in\theta$, there exist $(x,y)\in X\circledast Y$ such that $c(x,y)=\tau$.
\end{defn}

In an unpublished note from 1981, Todorcevic proved that $\omega_1\nrightarrow[\stat;\stat]^2_{\omega_1}$ holds.
A few years later, in \cite{TodActa},
the method of \emph{walks on ordinals} was introduced, with the following theorem serving as the primary application.
\begin{fact}[Todorcevic, \cite{TodActa}]\label{nonreftm} $\omega_1\nrightarrow[\omega_1]^2_{\omega_1}$ holds. 
Furthermore, for every regular uncountable cardinal $\kappa$ admitting a nonreflecting stationary set,
$\kappa\nrightarrow[\kappa]^2_{\kappa}$ holds.
\end{fact}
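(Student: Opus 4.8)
The plan is to invoke the method of \emph{walks on ordinals}. First note that $E^{\omega_1}_{>\omega}=\emptyset$, so $\Tr(S)=\emptyset$ for every $S\s\omega_1$; in particular $\omega_1$ is itself a nonreflecting stationary subset of $\omega_1$, and the first assertion is the instance $\kappa=\omega_1$ of the ``furthermore'' clause. So it suffices to fix a regular uncountable $\kappa$ together with a nonreflecting stationary $S\s\kappa$ and, shrinking $S$, to assume $S\s\acc(\kappa)$. Using that $S$ does not reflect, I would then fix a $C$-sequence $\vec C=\langle C_\alpha\mid\alpha<\kappa\rangle$ with $C_{\alpha+1}=\{\alpha\}$, with $C_\alpha$ a club in $\alpha$ for every limit $\alpha<\kappa$, and --- crucially --- with $C_\alpha\cap S=\emptyset$ whenever $\cf(\alpha)>\omega$; this last clause is available precisely because $S\cap\alpha$ is nonstationary in every such $\alpha$.

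Next I would set up the attendant walk: for $(\alpha,\beta)\in[\kappa]^2$ recursively define $\beta=\beta_0>\beta_1>\cdots>\beta_n=\alpha$ via $\beta_{i+1}:=\min(C_{\beta_i}\setminus\alpha)$, and put $\Tr(\alpha,\beta):=\{\beta_0,\dots,\beta_n\}$, $\rho_2(\alpha,\beta):=n$, and $\rho_1(\alpha,\beta):=\max_{i<n}\otp(C_{\beta_i}\cap\alpha)$. One then records the standard locality, coherence and monotonicity properties of walks --- for a fixed $\beta$, ordinals $\alpha$ lying close together have walks that share long common initial segments, and the pertinent order-types vary monotonically along the resulting intervals --- together with the effect of the extra demand on $\vec C$: since $S$ is avoided on ordinals of uncountable cofinality, any walk terminating at a point of $S$ has a tightly rigidified sequence of final steps. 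The coloring $c\colon[\kappa]^2\to\kappa$ witnessing $\kappa\nrightarrow[\kappa]^2_\kappa$ is then a walk-based coloring in the spirit of Todorcevic's $\rho$-function, reading off from $\Tr(\alpha,\beta)$ an ordinal encountered on this final, rigidified stretch of the walk.

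The heart of the argument is the claim that for every cofinal $X\s\kappa$ and every $\tau<\kappa$ there is $(\alpha,\beta)\in[X]^2$ with $c(\alpha,\beta)=\tau$. To prove it I would fix a continuous $\in$-chain $\langle M_i\mid i<\kappa\rangle$ of elementary submodels of $\mathcal H_{\kappa^+}$, each of size less than $\kappa$, with $\vec C,X,c\in M_0$, and thin $X$ by a pressing-down/$\Delta$-system argument to a cofinal $Y$ whose members are ``uniform'' relative to this chain and to $\vec C$. Given $\tau<\kappa$, one locates a limit point $\gamma$ of $Y$ lying in $S$ (possible since the limit points of $Y$ form a club while $S$ is stationary), picks $\beta\in Y$ large enough that $\tau$ and the relevant parameters sit in the appropriate $M_j$ and that the walk from $\beta$ is forced to visit $\gamma$, and then picks $\alpha\in Y\cap\gamma$ sufficiently high and uniform; the rigidity of the walk below $\gamma\in S$ then forces the walk from $\beta$ to $\alpha$ to read off exactly $\tau$, i.e.\ $c(\alpha,\beta)=\tau$.

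The step I expect to be the main obstacle is precisely this claim --- \emph{upgrading unboundedness to surjectivity}. That the walk-derived characteristics already take cofinally many values on $[X]^2$ for every cofinal $X$ can be obtained by a soft minimal-walk argument; the delicate points are (i) arranging, via the $\Delta$-system structure and elementarity, that the walk from $\beta$ down to $\alpha$ is forced to visit a prescribed point of $S$ near $\tau$, and (ii) designing the coloring and the $C$-sequence so that the rigidity there pins the output to $\tau$ rather than merely to some large ordinal. This is exactly the point at which the nonreflecting stationary set $S$ is indispensable --- it is what lets the tail of the walk be steered to an arbitrary location --- whereas over $\omega_1$ no hypothesis is needed since $E^{\omega_1}_{>\omega}$ is empty. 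The remaining ingredients, namely the elementary-submodel bookkeeping and the verification that the homogenized family $Y$ behaves as required, should be routine once the walk has been set up against the right $C$-sequence.
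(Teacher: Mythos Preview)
The paper does not supply a proof of this statement: it is recorded as a \emph{Fact} with a citation to Todorcevic's original paper \cite{TodActa}, so there is nothing in the present paper to compare your plan against.

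That said, your outline tracks the standard walks-on-ordinals argument and is broadly sound, but it is vague at exactly the point you yourself flag as the obstacle. You never commit to a definition of the coloring $c$; phrases like ``a walk-based coloring in the spirit of Todorcevic's $\rho$-function, reading off from $\Tr(\alpha,\beta)$ an ordinal encountered on this final, rigidified stretch'' are not a definition, and the passage from ``the walk is forced to visit a prescribed point of $S$ near $\tau$'' to ``the output is exactly $\tau$'' is where all the content lives. In the actual argument one typically arranges $\acc(C_\alpha)\cap S=\emptyset$ for \emph{all} limit $\alpha$ (for $\cf(\alpha)=\omega$ take $\otp(C_\alpha)=\omega$), which guarantees $\lambda(\delta,\beta)<\delta$ for every $\delta\in S$; one then chooses $\delta\in S$ among the limit points of $X$, picks $\beta\in X$ above $\delta$, and uses the inequality $\lambda(\delta,\beta)<\delta$ to freeze the upper portion of every walk $\tr(\alpha,\beta)$ with $\alpha\in(\lambda(\delta,\beta),\delta)$. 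The coloring is then designed so that its value is determined by the last step of this frozen upper portion (or by an explicitly coded ordinal carried there), and an auxiliary device---club-guessing, a surjection $g$ on the stationary set, or an oscillation---is what lets you \emph{prescribe} that value. Your plan gestures at all of this but does not name the device, so as written it is a correct strategic summary rather than a proof.
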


Later, by a series of results of Shelah concerning cardinals $\kappa>\aleph_1$ together with a result of Moore concerning $\kappa=\aleph_1$, $\kappa\nrightarrow[\kappa;\kappa]^2_\kappa$ holds for any cardinal $\kappa$ which is the successor of an infinite regular cardinal;
see \cite{paper14} for an historical account and a uniform proof of the following:
\begin{fact}[Shelah, Moore]\label{rectangular} $\nu^+\nrightarrow[\nu^+;\nu^+]^2_{\nu^+}$ holds for any infinite regular cardinal $\nu$.
\end{fact}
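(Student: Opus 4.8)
The plan is to treat this as a known (and genuinely hard) theorem and to sketch the walks-on-ordinals proof behind it, which naturally splits according to whether $\nu$ is uncountable. For $\nu$ uncountable the key input is Shelah's club-guessing technology on $E^{\nu^+}_\nu$ together with his strong-colouring machinery (a witness for $\pr_1(\nu^+,\nu^+,\nu^+,\nu)$, refined to the rectangular form); for $\nu=\aleph_0$, where club guessing on $E^{\omega_1}_\omega$ is unavailable, the key input is instead Moore's oscillation analysis of walks along a ladder system. The cleanest route is to follow the uniform account of \cite{paper14}, organising both cases around a single scaffolding.

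Concretely, I would fix on $\kappa:=\nu^+$ a $C$-sequence $\langle C_\alpha\mid\alpha<\kappa\rangle$ of width $\le\nu$ (i.e.\ $\otp(C_\alpha)\le\nu$ for all $\alpha$, which is available since $|\alpha|\le\nu$ for $\alpha<\kappa$), and work with the associated walk functions from Subsection~\ref{subsectionwalks}: the trace $\Tr(\alpha,\beta)$, the step-counting function $\rho_1$, and the last-step function. The colouring $c:[\kappa]^2\to\kappa$ would be defined purely from the walk $\Tr(x,y)$ --- roughly, it records the last step below a prescribed level together with the finite oscillation pattern of $\Tr(x,y)$ against a fixed reference walk, and the width-$\le\nu$ hypothesis is used to code an arbitrary ordinal $<\kappa$ into this bounded amount of data. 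Verifying that $c$ witnesses $\nu^+\nrightarrow[\nu^+;\nu^+]^2_{\nu^+}$ is the substance of the argument.

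For that verification, given cofinal $X,Y\s\kappa$ and a target colour $\tau<\kappa$, I would fix a well-ordered expansion of $(\mathcal H_{\kappa^+},\in)$ and an elementary submodel $M$ of it with $|M|=\nu$, $\nu\cup\{X,Y,\tau,\langle C_\alpha\mid\alpha<\kappa\rangle\}\s M$, and $\delta:=M\cap\kappa\in E^\kappa_\nu$. Pick any $y\in Y\setminus\delta$. By the standard analysis of walks below an elementary-submodel point, the part of $\Tr(x,y)$ lying weakly above $\delta$ stabilises to a value depending only on $y$ once $x$ is taken large enough in $X\cap\delta$, so the colour $c(x,y)$ is then governed entirely by the portion of the walk lying strictly below $\delta$. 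The remaining task is to choose $x\in X\cap\delta$ so as to steer that lower portion --- its last step and its oscillation pattern --- to realise the prescribed $\tau$. This is exactly where the two cases part ways and where the difficulty lies: for uncountable $\nu$ one feeds in a club-guessing sequence on $E^\kappa_\nu$ so that $X\cap\delta$ is "caught" cofinally in $\delta$ in a manner that forces the walk through prescribed vertices, whereas for $\nu=\aleph_0$ one runs Moore's oscillation argument on the ladders $C_\beta$ met by the walk below $\delta$. I expect the main obstacle to be precisely this below-$\delta$ control --- showing that the last-step/oscillation data can be prescribed freely --- while everything else reduces to bookkeeping with elementary submodels and the usual identities governing walks on ordinals.
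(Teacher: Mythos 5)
The paper offers no proof of this statement: it is stated as a ``Fact'' and the surrounding text explicitly defers to \cite{paper14} for a uniform proof of a theorem due historically to Shelah (for $\nu$ uncountable) and to Moore (for $\nu=\aleph_0$). So there is no in-paper argument to compare against; what can be assessed is whether your sketch is a fair description of the external result, and whether it could plausibly be turned into a proof.

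Your outline has the right general shape: walks along a $C$-sequence of width $\le\nu$ on $\kappa=\nu^+$, an elementary submodel $M$ of size $\nu$ with $\delta:=M\cap\kappa\in E^\kappa_\nu$, splitting the walk $\Tr(x,y)$ at $\delta$ and then steering the lower portion. Two cautions, though. First, the point of the account in \cite{paper14} is that it is genuinely uniform in $\nu$; your sketch acknowledges this but then immediately says ``this is exactly where the two cases part ways,'' i.e.\ you are still running the historical Shelah (club-guessing) and Moore (oscillation) arguments in parallel under one wrapper rather than reproducing the single unified argument. Second, and decisively, you identify ``showing that the last-step/oscillation data can be prescribed freely'' as the main obstacle and leave it open --- but that is not a technicality to be postponed, it \emph{is} the theorem. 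The actual content in \cite{paper14} is an oscillation lemma for traces along a carefully chosen $C$-sequence, plus a coding step that converts the finitely many controllable bits into an arbitrary ordinal $<\nu^+$; none of that appears in your proposal. Since the paper itself treats the statement as a black-box citation this is not a defect of your reading of the paper, but your proposal should be understood as a plan, not a proof. (Minor notational slip: in this paper's notation the step-counting function is $\rho_2$, not $\rho_1$.)
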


In this subsection, we present a few basic components of the theory of walks on ordinals,
which we will be using throughout the rest of the paper.

\begin{defn} 
For a set of ordinals $\Gamma$, a \emph{$C$-sequence over $\Gamma$}
is a sequence of sets $\langle C_\alpha\mid\alpha\in\Gamma\rangle$
such that, for all $\alpha\in\Gamma$, $C_\alpha$ is a closed subset of $\alpha$ with $\sup(C_\alpha)=\sup(\alpha)$.
\end{defn}

For the rest of this subsection, 
let us fix a $C$-sequence $\vec C=\langle C_\alpha\mid\alpha<\kappa\rangle$ over $\kappa$.

\begin{defn}[Todorcevic, \cite{TodActa}] From $\vec C$, we derive maps $\Tr:[\kappa]^2\rightarrow{}^\omega\kappa$,
$\rho_2:[\kappa]^2\rightarrow	\omega$,
$\tr:[\kappa]^2\rightarrow{}^{<\omega}\kappa$ and $\lambda:[\kappa]^2\rightarrow\kappa$, as follows.
Let $(\alpha,\beta)\in[\kappa]^2$ be arbitrary.
\begin{itemize}
\item $\Tr(\alpha,\beta):\omega\rightarrow\kappa$ is defined by recursion on $n<\omega$:
$$\Tr(\alpha,\beta)(n):=\begin{cases}
\beta,&n=0\\
\min(C_{\Tr(\alpha,\beta)(n-1)}\setminus\alpha),&n>0\ \&\ \Tr(\alpha,\beta)(n-1)>\alpha\\
\alpha,&\text{otherwise}
\end{cases}
$$
\item $\rho_2(\alpha,\beta):=\min\{n<\omega\mid \Tr(\alpha,\beta)(n)=\alpha\}$;
\item $\tr(\alpha,\beta):=\Tr(\alpha,\beta)\restriction \rho_2(\alpha,\beta)$;
\item $\lambda(\alpha,\beta):=\max\{ \sup(C_{\Tr(\alpha,\beta)(i)}\cap\alpha) \mid i<\rho_2(\alpha,\beta)\}$.
\end{itemize}
\end{defn}

The next two facts are quite elementary. They are reproduced with proofs as Claims 3.1.1 and 3.1.2 of \cite{paper15}.
\begin{fact}\label{fact1} Whenever $0<\beta<\gamma<\kappa$, if $\beta\notin\bigcup_{\alpha<\kappa}\acc(C_\alpha)$, then $\lambda(\beta,\gamma)<\beta$.
\end{fact}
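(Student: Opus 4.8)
The plan is to unravel the definition of $\lambda$ and reduce the statement to a claim about the individual members of the trace, at which point the nonaccumulation hypothesis on $\beta$ does the work. By definition, $\lambda(\beta,\gamma)$ is the maximum of the finitely many ordinals $\sup(C_{\Tr(\beta,\gamma)(i)}\cap\beta)$ as $i$ ranges over $\rho_2(\beta,\gamma)$, and this index set is nonempty since $\Tr(\beta,\gamma)(0)=\gamma\neq\beta$ forces $\rho_2(\beta,\gamma)\geq 1$. As $\beta>0$ and a maximum of finitely many ordinals each below $\beta$ is again below $\beta$, it suffices to prove that $\sup(C_\delta\cap\beta)<\beta$ for every $\delta$ of the form $\Tr(\beta,\gamma)(i)$ with $i<\rho_2(\beta,\gamma)$.

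First I would record the standard fact that every such $\delta$ satisfies $\delta>\beta$: indeed $\Tr(\beta,\gamma)(0)=\gamma>\beta$, and an easy induction on $i<\rho_2(\beta,\gamma)$ using the recursive definition shows that whenever the previous entry exceeds $\beta$, the clause $\Tr(\beta,\gamma)(i)=\min(C_{\Tr(\beta,\gamma)(i-1)}\setminus\beta)$ applies and yields a value $\geq\beta$, which by minimality of $\rho_2(\beta,\gamma)$ is in fact $>\beta$ for all $i<\rho_2(\beta,\gamma)$.

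Now fix such a $\delta$ and suppose toward a contradiction that $\sup(C_\delta\cap\beta)=\beta$. Then $C_\delta\cap\beta$ is cofinal in $\beta$ without a maximum, so $\beta$ is a limit point of $C_\delta$; since $C_\delta$ is closed in $\delta$ and $\beta<\delta$, this gives $\beta\in C_\delta$, whence $\beta+1\leq\ssup(C_\delta)$ and in particular $\beta<\ssup(C_\delta)$. Combined with $\sup(C_\delta\cap\beta)=\beta>0$, the definition of $\acc^+$ yields $\beta\in\acc^+(C_\delta)$, and therefore $\beta\in C_\delta\cap\acc^+(C_\delta)=\acc(C_\delta)\s\bigcup_{\alpha<\kappa}\acc(C_\alpha)$, contradicting the hypothesis on $\beta$. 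Hence $\sup(C_\delta\cap\beta)<\beta$, which completes the argument.

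I do not expect a genuine obstacle here; the argument is entirely elementary. The only point calling for a bit of care is the bookkeeping with $\acc$, $\acc^+$, and $\ssup$ needed to place $\beta$ inside $\acc(C_\delta)$, together with the harmless observation that when $\beta$ is a successor ordinal the conclusion holds trivially (no member of any $\acc(C_\alpha)$ is a successor, so the hypothesis is automatic, and $\sup(C_\delta\cap\beta)<\beta$ is immediate).
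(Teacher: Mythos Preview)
Your argument is correct and is exactly the standard elementary unpacking of the definitions; the paper does not give its own proof of this fact but merely cites \cite{paper15}, where the same computation appears. Your final paragraph on successor $\beta$ is redundant (the main argument already covers that case), but harmless.
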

\begin{fact}\label{fact2} Whenever $\lambda(\beta,\gamma)<\alpha<\beta<\gamma<\kappa$, 
$\tr(\alpha,\gamma)=\tr(\beta,\gamma){}^\smallfrown \tr(\alpha,\beta)$.
\end{fact}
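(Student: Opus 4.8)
The plan is to follow the walk from $\gamma$ down toward $\alpha$ and show that, under the hypothesis $\lambda(\beta,\gamma)<\alpha$, this walk passes through $\beta$, after which it coincides with the walk from $\beta$ down to $\alpha$. To fix notation, set $k:=\rho_2(\beta,\gamma)$ and $\gamma_i:=\Tr(\beta,\gamma)(i)$ for $i\le k$, so that $\gamma_0=\gamma$, $\gamma_k=\beta$, each $\gamma_i$ with $i<k$ satisfies $\gamma_i>\beta$, and $\gamma_{i+1}=\min(C_{\gamma_i}\setminus\beta)$ for every $i<k$.

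The \emph{key observation} is that $\min(C_{\gamma_i}\setminus\alpha)=\gamma_{i+1}$ for every $i<k$. Indeed, by the definition of $\lambda$ we have $\sup(C_{\gamma_i}\cap\beta)\le\lambda(\beta,\gamma)<\alpha$, so $C_{\gamma_i}\cap[\alpha,\beta)=\emptyset$ and hence $C_{\gamma_i}\setminus\alpha=C_{\gamma_i}\setminus\beta$; since $\gamma_i>\beta$ forces $\sup(C_{\gamma_i})=\gamma_i>\beta$, this common set is nonempty and its minimum is $\gamma_{i+1}$.

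Granting this, I would first prove by induction on $i\le k$ that $\Tr(\alpha,\gamma)(i)=\gamma_i$: the case $i=0$ is trivial, and for the successor step one uses that $\gamma_i\ge\beta>\alpha$, so the recursion defining $\Tr(\alpha,\gamma)$ returns $\min(C_{\gamma_i}\setminus\alpha)$, which equals $\gamma_{i+1}$ by the key observation. In particular $\Tr(\alpha,\gamma)(k)=\beta$. A second, immediate induction on $j<\omega$ --- with base case this last equality, and successor step simply noting that $\Tr(\alpha,\gamma)$ past index $k$ and $\Tr(\alpha,\beta)$ obey one and the same recursion toward $\alpha$ --- yields $\Tr(\alpha,\gamma)(k+j)=\Tr(\alpha,\beta)(j)$ for all $j<\omega$.

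Finally I would read off the conclusion. Since $\Tr(\alpha,\gamma)(i)=\gamma_i>\alpha$ for all $i\le k$, whereas $\Tr(\alpha,\gamma)(k+j)=\Tr(\alpha,\beta)(j)$ first takes the value $\alpha$ at $j=\rho_2(\alpha,\beta)$, we obtain $\rho_2(\alpha,\gamma)=k+\rho_2(\alpha,\beta)$; restricting $\Tr(\alpha,\gamma)$ to this length and splitting the resulting sequence at index $k$ gives $\tr(\alpha,\gamma)=\langle\gamma_i\mid i<k\rangle{}^\smallfrown\langle\Tr(\alpha,\beta)(j)\mid j<\rho_2(\alpha,\beta)\rangle=\tr(\beta,\gamma){}^\smallfrown\tr(\alpha,\beta)$. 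I expect no real obstacle; the only point requiring a little care is the nonemptiness of $C_{\gamma_i}\setminus\beta$ used in the key observation, which is exactly what ensures the walk toward $\alpha$ does not halt before reaching $\beta$.
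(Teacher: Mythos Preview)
Your argument is correct and is the standard proof of this elementary fact. The paper itself does not give a proof here; it merely cites \cite{paper15} (Claim~3.1.2 there), where essentially the same argument appears: one observes that $\sup(C_{\gamma_i}\cap\beta)\le\lambda(\beta,\gamma)<\alpha$ for each $i<\rho_2(\beta,\gamma)$, so the walk from $\gamma$ toward $\alpha$ coincides with the walk toward $\beta$ until it reaches $\beta$, after which it continues as the walk from $\beta$ to $\alpha$.

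One cosmetic remark: your aside that ``$\gamma_i>\beta$ forces $\sup(C_{\gamma_i})=\gamma_i>\beta$'' is not quite literally true when $\gamma_i$ is a successor ordinal (then $\sup(C_{\gamma_i})=\sup(\gamma_i)=\gamma_i-1$). But this is harmless: you have already recorded that $\gamma_{i+1}=\min(C_{\gamma_i}\setminus\beta)$ exists by the very definition of the walk $\Tr(\beta,\gamma)$, so the nonemptiness of $C_{\gamma_i}\setminus\beta$ is automatic and the key observation goes through unchanged.
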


\begin{conv}\label{conv28} For any coloring $f:[\kappa]^2\rightarrow\kappa$ and $\delta<\kappa$,
while $(\delta,\delta)\notin[\kappa]^2$, we extend the definition of $f$, and agree to let $f(\delta,\delta):=0$.
\end{conv}

\begin{lemma}\label{fact3} 
Let $(\alpha,\gamma)\in[\kappa]^2$. For every $\beta\in\im(\tr(\alpha,\gamma))$,
$$\lambda(\alpha,\gamma)=\max\{\lambda(\beta,\gamma),\lambda(\alpha,\beta)\}.$$
\end{lemma}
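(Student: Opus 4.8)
The plan is to unwind the definition of $\lambda$ via the standard fact that, for $\beta\in\im(\tr(\alpha,\gamma))$, the walk from $\gamma$ to $\alpha$ \emph{splits} at $\beta$: it is the walk from $\gamma$ to $\beta$ followed by the walk from $\beta$ to $\alpha$.

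The case $\beta=\gamma$ is trivial, since by Convention~\ref{conv28} we have $\lambda(\gamma,\gamma)=0\le\lambda(\alpha,\gamma)$, so the right-hand side is just $\lambda(\alpha,\gamma)$. So assume $\gamma>\beta\in\im(\tr(\alpha,\gamma))$ and fix $k<\rho_2(\alpha,\gamma)$ with $\beta=\Tr(\alpha,\gamma)(k)$. The first step is to prove, by a routine induction from the recursive definition of $\Tr$, the \emph{concatenation property}: $\rho_2(\beta,\gamma)=k$, $\rho_2(\alpha,\gamma)=k+\rho_2(\alpha,\beta)$, $\Tr(\alpha,\gamma)(i)=\Tr(\beta,\gamma)(i)$ for every $i<k$, and $\Tr(\alpha,\gamma)(k+j)=\Tr(\alpha,\beta)(j)$ for every $j<\rho_2(\alpha,\beta)$. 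For the first two items one notes that for $j<k$, $\alpha<\beta\le\Tr(\alpha,\gamma)(j+1)\in C_{\Tr(\alpha,\gamma)(j)}$, whence $\min(C_{\Tr(\alpha,\gamma)(j)}\setminus\beta)=\min(C_{\Tr(\alpha,\gamma)(j)}\setminus\alpha)=\Tr(\alpha,\gamma)(j+1)$; thus the walk from $\gamma$ to $\beta$ agrees with the walk from $\gamma$ to $\alpha$ up to step $k$, where it reaches $\beta$ and halts. For the last two items: from $\beta$ onward, both the walk from $\beta$ to $\alpha$ and the tail of the walk from $\gamma$ to $\alpha$ obey the same instruction ``move to $\min(C_{\text{current node}}\setminus\alpha)$'', so they coincide.

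Alongside this I would record that $C_\delta\cap\alpha=C_\delta\cap\beta$, and hence $\sup(C_\delta\cap\alpha)=\sup(C_\delta\cap\beta)$, for every $\delta\in\im(\tr(\beta,\gamma))$: writing $\delta=\Tr(\alpha,\gamma)(j)$ with $j<k$, we have $\alpha<\beta\le\Tr(\alpha,\gamma)(j+1)=\min(C_\delta\setminus\alpha)$, so $C_\delta$ meets $[\alpha,\beta)$ in nothing. Finally I would substitute into the defining formula $\lambda(\eta,\zeta)=\max\{\sup(C_{\Tr(\eta,\zeta)(i)}\cap\eta)\mid i<\rho_2(\eta,\zeta)\}$: splitting the index set of $\lambda(\alpha,\gamma)$ at $k$ and using the concatenation property, $\lambda(\alpha,\gamma)$ equals the maximum of $A:=\max\{\sup(C_{\Tr(\beta,\gamma)(i)}\cap\alpha)\mid i<k\}$ and $B:=\max\{\sup(C_{\Tr(\alpha,\beta)(j)}\cap\alpha)\mid j<\rho_2(\alpha,\beta)\}$. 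Here $B=\lambda(\alpha,\beta)$ outright, and by the previous paragraph $A=\max\{\sup(C_{\Tr(\beta,\gamma)(i)}\cap\beta)\mid i<k\}=\lambda(\beta,\gamma)$, giving the desired identity. The only point requiring real care is the index bookkeeping in the concatenation property, together with the observation that $[\alpha,\beta)\cap C_\delta=\emptyset$ for the relevant nodes $\delta$ — which is exactly where one uses that $\beta$ lies on the walk from $\gamma$ to $\alpha$, and not merely that $\beta<\gamma$; I do not anticipate a genuine obstacle.
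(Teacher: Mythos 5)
Your proof is correct and follows the same basic strategy as the paper's: decompose $\tr(\alpha,\gamma)$ at $\beta$ into $\tr(\beta,\gamma){}^\smallfrown\tr(\alpha,\beta)$, then split the max defining $\lambda(\alpha,\gamma)$ accordingly. The one place where you do something cleaner is in handling the contribution from the $\tr(\beta,\gamma)$-part: you observe upfront that for every $\delta\in\im(\tr(\beta,\gamma))$ one has $C_\delta\cap[\alpha,\beta)=\emptyset$ (since $\min(C_\delta\setminus\alpha)$ is the next step of the walk and hence $\ge\beta$), so $\sup(C_\delta\cap\alpha)=\sup(C_\delta\cap\beta)$, giving $\lambda(\alpha,\gamma)=\max\{\lambda(\beta,\gamma),\lambda(\alpha,\beta)\}$ in one line. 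The paper's proof instead first derives $\lambda(\alpha,\gamma)\le\max\{\lambda(\beta,\gamma),\lambda(\alpha,\beta)\}$ by the crude bound $\sup(C_{\tau_0}\cap\alpha)\le\sup(C_{\tau_0}\cap\beta)$, then $\lambda(\alpha,\gamma)\ge\lambda(\alpha,\beta)$, and finally rules out $\lambda(\alpha,\gamma)<\lambda(\beta,\gamma)$ by a contradiction argument which unpacks to exactly your observation (if $\sup(C_{\Tr(\beta,\gamma)(i)}\cap\alpha)<\sup(C_{\Tr(\beta,\gamma)(i)}\cap\beta)$ then the walk from $\gamma$ to $\alpha$ drops below $\beta$ at step $i+1$). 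So the geometric content is identical; your write-up front-loads it and avoids the trichotomy and reductio. Both routes are valid, and your spelled-out verification of the concatenation property — which the paper takes as known — is also correct.
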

\begin{proof} Let $\beta$ be as above, so that $\tr(\alpha,\gamma)=\tr(\beta,\gamma){}^\smallfrown\tr(\gamma,\beta)$. We have 
\begin{align*}
\lambda(\alpha,\gamma)=&\max\{ \sup(C_{\tau}\cap\alpha) \mid \tau\in\im(\tr(\alpha,\gamma))\}=\\
&\max\{ \sup(C_{\tau_0}\cap\alpha),\sup(C_{\tau_1}\cap\alpha)  \mid \tau_0\in\im(\tr(\beta,\gamma)),\tau_1\in\im(\tr(\alpha,\beta))\}\le\\
&\max\{ \sup(C_{\tau_0}\cap\beta),\sup(C_{\tau_1}\cap\alpha)  \mid \tau_0\in\im(\tr(\beta,\gamma)),\tau_1\in\im(\tr(\alpha,\beta))\}=\\
&\max\{\lambda(\beta,\gamma),\lambda(\alpha,\beta)\},
\end{align*}
and
\begin{align*}
\lambda(\alpha,\gamma)=&
\max\{ \sup(C_{\tau_0}\cap\alpha),\sup(C_{\tau_1}\cap\alpha)  \mid \tau_0\in\im(\tr(\beta,\gamma)),\tau_1\in\im(\tr(\alpha,\beta))\}\ge\\
&\max\{ \sup(C_{\tau_1}\cap\alpha)  \mid \tau_1\in\im(\tr(\alpha,\beta))\}=\lambda(\alpha,\beta).
\end{align*}

So, if $\lambda(\alpha,\gamma)\neq\max\{\lambda(\beta,\gamma),\lambda(\alpha,\beta)\}$,
then $\lambda(\alpha,\gamma)<\lambda(\beta,\gamma)$,
and we may fix the least $i<\rho_2(\beta,\gamma)$ to satisfy $\sup(C_{\Tr(\beta,\gamma)(i)}\cap\alpha)<\sup(C_{\Tr(\beta,\gamma)(i)}\cap\beta)$;
but then $\Tr(\alpha,\gamma)(i+1)=\min(C_{\Tr(\beta,\gamma)(i)}\setminus\alpha)<\beta\le\Tr(\beta,\gamma)(i+1)$,
contradicting the fact that $\tr(\beta,\gamma){}^\smallfrown\langle\beta\rangle\sq\tr(\alpha,\gamma)$.
\end{proof}

\begin{defn} For every $(\alpha,\beta)\in[\kappa]^2$, we define an ordinal $\last{\alpha}{\beta}\in[\alpha,\beta]$ via:
$$\last{\alpha}{\beta}:=\begin{cases}
\alpha,&\text{if }\lambda(\alpha,\beta)<\alpha;\\
\min(\im(\tr(\alpha,\beta)),&\text{otherwise}.
\end{cases}$$
\end{defn}

\begin{lemma}\label{last} Let $(\alpha,\beta)\in[\kappa]^2$ with $\alpha>0$. Then
\begin{enumerate}
\item $\lambda(\last{\alpha}{\beta},\beta)<\alpha$;\footnote{Recall Convention~\ref{conv28}.}
\item If $\last{\alpha}{\beta}\neq\alpha$, then $\alpha\in\acc(C_{\last{\alpha}{\beta}})$;
\item $\tr(\last{\alpha}{\beta},\beta)\sq\tr(\alpha,\beta)$.
\end{enumerate}
\end{lemma}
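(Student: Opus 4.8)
The plan is to argue by cases on which clause of the definition of $\delta:=\last{\alpha}{\beta}$ applies. If $\lambda(\alpha,\beta)<\alpha$, then $\delta=\alpha$, so (1) is just the case hypothesis, (2) is vacuous, and (3) is the triviality $\tr(\alpha,\beta)\sq\tr(\alpha,\beta)$. So all the content lies in the complementary case $\lambda(\alpha,\beta)\ge\alpha$, on which I focus from now on. Writing $n:=\rho_2(\alpha,\beta)$ and $\beta_i:=\Tr(\alpha,\beta)(i)$ for $i\le n$, each $\beta_{i+1}=\min(C_{\beta_i}\setminus\alpha)$ (for $i<n$) lies in $C_{\beta_i}\s\beta_i$ while $\beta_i>\alpha$, so the sequence $\beta=\beta_0>\beta_1>\dots>\beta_n=\alpha$ is strictly decreasing; hence $\tr(\alpha,\beta)=\langle\beta_0,\dots,\beta_{n-1}\rangle$ and $\delta=\min(\im(\tr(\alpha,\beta)))=\beta_{n-1}$. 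From $\beta_n=\min(C_{\beta_{n-1}}\setminus\alpha)=\alpha$ I record that $\alpha\in C_\delta$.

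The workhorse observation is: for every $i<n-1$ one has $\sup(C_{\beta_i}\cap\alpha)<\alpha$. Indeed, for such $i$ we have $\beta_{i+1}>\alpha$, hence $\min(C_{\beta_i}\setminus\alpha)>\alpha$, so $\alpha\notin C_{\beta_i}$ and $C_{\beta_i}\cap[\alpha,\beta_{i+1})=\emptyset$; and if $\sup(C_{\beta_i}\cap\alpha)$ were equal to $\alpha$, then the closedness of $C_{\beta_i}$ in $\beta_i$ together with $\alpha<\beta_i$ would put $\alpha$ back into $C_{\beta_i}$, a contradiction. Granting this, (2) is quick: $\lambda(\alpha,\beta)=\max\{\sup(C_{\beta_i}\cap\alpha)\mid i<n\}\ge\alpha$ while every term with $i<n-1$ is $<\alpha$, so the maximum is witnessed at $i=n-1$, giving $\sup(C_\delta\cap\alpha)=\alpha$; combined with $\alpha\in C_\delta$ and $\alpha>0$, this is precisely $\alpha\in\acc(C_\delta)$.

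For (1) I would first identify $\tr(\delta,\beta)$ by a short induction: the walk from $\beta$ down to $\delta=\beta_{n-1}$ retraces $\beta_0,\beta_1,\dots$, because at step $i<n-1$ one has $\beta_{i+1}\ge\beta_{n-1}=\delta>\alpha$ and $C_{\beta_i}\cap[\alpha,\beta_{i+1})=\emptyset$, so $\min(C_{\beta_i}\setminus\delta)=\beta_{i+1}$; the walk then halts upon reaching $\beta_{n-1}=\delta$. Thus $\tr(\delta,\beta)=\langle\beta_0,\dots,\beta_{n-2}\rangle$ and $\lambda(\delta,\beta)=\max\{\sup(C_{\beta_i}\cap\delta)\mid i<n-1\}$; for each such $i$ the emptiness of $C_{\beta_i}\cap[\alpha,\beta_{i+1})$ together with $\delta\le\beta_{i+1}$ gives $C_{\beta_i}\cap\delta=C_{\beta_i}\cap\alpha$, so each term equals $\sup(C_{\beta_i}\cap\alpha)<\alpha$ by the workhorse observation, whence $\lambda(\delta,\beta)<\alpha$. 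The degenerate subcase $n=1$, i.e.\ $\delta=\beta$, is exactly where Convention~\ref{conv28} is needed: there $\lambda(\delta,\beta)=\lambda(\beta,\beta)=0<\alpha$ since $\alpha>0$. Finally, (3) drops out of the same computation, since $\tr(\delta,\beta)=\langle\beta_0,\dots,\beta_{n-2}\rangle$ is an initial segment of $\tr(\alpha,\beta)=\langle\beta_0,\dots,\beta_{n-1}\rangle$ (interpreting $\tr(\beta,\beta)$ as the empty sequence when $n=1$); alternatively, once (1) is in hand one may invoke Fact~\ref{fact2} with $\alpha<\delta<\beta$ to get $\tr(\alpha,\beta)=\tr(\delta,\beta){}^\smallfrown\tr(\alpha,\delta)$.

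The only genuine obstacle is keeping the degenerate values of $\delta$ --- $\delta=\alpha$ versus $\delta=\beta$ versus $\alpha<\delta<\beta$ --- straight, and applying Convention~\ref{conv28} exactly at the point where a pair collapses to the diagonal; the lone computational step is the ``retracing'' identification of $\tr(\delta,\beta)$, and once it and the workhorse observation are established, the rest is bookkeeping.
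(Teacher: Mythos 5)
Your proposal is correct and follows essentially the same strategy as the paper: both reduce to the nontrivial case $\lambda(\alpha,\beta)=\alpha$, record that for each internal step $i$ of the walk $C_{\beta_i}\cap[\alpha,\beta_{i+1})=\emptyset$ and hence $\sup(C_{\beta_i}\cap\alpha)<\alpha$ (your ``workhorse observation''), and then read off the three clauses from this. You have merely spelled out the retracing of the walk from $\beta$ down to $\last{\alpha}{\beta}$ and the boundary case $n=1$, which the paper compresses into ``Now, the three clauses follow immediately.''
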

\begin{proof} To avoid trivialities, assume that $\lambda(\alpha,\beta)=\alpha$.
Let $\beta_0>\cdots>\beta_n>\beta_{n+1}$ denote the decreasing enumeration of the elements of $\im(\Tr(\alpha,\beta))$,
so that $\beta_0=\beta$, $\beta_n=\last{\alpha}{\beta}$, and $\beta_{n+1}=\alpha$.
For each $i<n$, $C_{\beta_i}\cap [\alpha,\beta_{i+1})$ is empty,
so that $\min(C_{\beta_i}\setminus\beta_n)=\min(C_{\beta_i}\setminus\alpha)$
and $\sup(C_{\beta_i}\cap\beta_n)=\sup(C_{\beta_i}\cap\alpha)<\alpha$.
Now, the three clauses follow immediately.
\end{proof}

For the purpose of this paper, we also introduce the following ad-hoc notation.
\begin{defn}\label{etanotation}
For every ordinal $\eta<\kappa$ and a pair $(\alpha,\beta)\in[\kappa]^2$, we let
$$\eta_{\alpha,\beta}:=\min\{ n<\omega\mid \eta\in C_{\Tr(\alpha,\beta)(n)}\text{ or }n=\rho_2(\alpha,\beta)\}+1.$$
\end{defn}

\subsection{Relationship to the $C$-sequence number}
\begin{defn}[The $C$-sequence number of $\kappa$, \cite{paper35}] \label{c_seq_num_def}
  If $\kappa$ is weakly compact, then we define $\chi(\kappa):=0$. Otherwise, we let
  $\chi(\kappa)$ denote the least (finite or infinite) cardinal $\chi\le\kappa$
  such that, for every $C$-sequence $\langle C_\beta\mid\beta<\kappa\rangle$,
  there exist $\Delta\in[\kappa]^\kappa$ and $b:\kappa\rightarrow[\kappa]^{\chi}$
  with $\Delta\cap\alpha\s\bigcup_{\beta\in b(\alpha)}C_\beta$
  for every $\alpha<\kappa$.
\end{defn}

\begin{fact}[Todorcevic, {\cite[Theorem~8.1.11]{TodWalks}}]\label{oscfact} If $\chi(\kappa)>1$, then $\kappa\nrightarrow[\kappa]^2_\omega$.
\end{fact}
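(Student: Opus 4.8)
The plan is to obtain the coloring from Todorcevic's theory of oscillations of walks, fed by a $C$-sequence that witnesses the failure, at the value $1$, of the covering property defining $\chi(\kappa)$. First I would use $\chi(\kappa)>1$ to fix a $C$-sequence $\vec C=\langle C_\alpha\mid\alpha<\kappa\rangle$ over $\kappa$ with the property that for no $\Delta\in[\kappa]^\kappa$ is there a function $b:\kappa\rightarrow\kappa$ satisfying $\Delta\cap\alpha\s C_{b(\alpha)}$ for every $\alpha<\kappa$; this is precisely the negation of the $\chi=1$ instance of Definition~\ref{c_seq_num_def}, and it also entails that $\kappa$ is not weakly compact. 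After a harmless normalization I may assume that $C_{\alpha+1}=\{\alpha\}$ for all $\alpha$ and that $C_\alpha$ is a club in $\alpha$ whenever $\alpha$ is a limit ordinal. From now on $\rho_2$, $\tr$, $\lambda$ and $\last{\cdot}{\cdot}$ refer to the walk-derived maps relative to this fixed $\vec C$, so that Facts~\ref{fact1} and~\ref{fact2} and Lemmas~\ref{fact3} and~\ref{last} are all at our disposal.

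Next I would set up an oscillation-based coloring $c:[\kappa]^2\rightarrow\omega$ of the kind used in \cite[\S8.1]{TodWalks}: for $(\alpha,\beta)\in[\kappa]^2$ with $\alpha$ limit, $c(\alpha,\beta)$ records the number of ``oscillations'' between $C_\alpha$ and the last step $C_{\last{\alpha}{\beta}}$ of the walk from $\beta$ to $\alpha$, measured on a finite window singled out by the walk; for all other pairs one sets $c(\alpha,\beta):=0$. Here Lemma~\ref{last}(2) guarantees that $\alpha\in\acc(C_{\last{\alpha}{\beta}})$ when $\last{\alpha}{\beta}\neq\alpha$, so the measurement is well posed, and Fact~\ref{fact2} localizes the relevant portion of the walk: for a fixed $\beta$, the value $\last{\alpha}{\beta}$, and hence the combinatorics underlying $c(\alpha,\beta)$, stabilizes as $\alpha$ ranges over a tail of $\beta$.

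The crux is the oscillation lemma: for every cofinal $X\s\kappa$ and every $n<\omega$ there is $(\alpha,\beta)\in[X]^2$ with $c(\alpha,\beta)=n$. I would establish its contrapositive. Suppose $X\s\kappa$ is cofinal and $n_0<\omega$ is avoided by $c$ on $[X]^2$. By an elementary-submodel (equivalently, pressing-down) argument, fix a large $\beta^*$ and a cofinal $X'\s X\cap\beta^*$ along which $\last{\alpha}{\beta^*}$ is constantly some $\delta<\kappa$; via Fact~\ref{fact2} and Lemma~\ref{last}, the hypothesis on $c$ then turns into a uniform finite bound on the oscillation of $C_\delta\cap\alpha$ against $C_\alpha$ for all $\alpha\in X'$. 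Iterating this extraction finitely often and re-indexing the successive bottlenecks, one confines the members of $X$ to finitely many blocks of the relevant $C$-sets and reads off from this data a cofinal $\Delta\s X$ together with a map $b:\kappa\rightarrow\kappa$ for which $\Delta\cap\alpha\s C_{b(\alpha)}$ holds for every $\alpha<\kappa$ --- contradicting the choice of $\vec C$. This conversion of ``$c$ bounded on $X$'' into the forbidden covering data $(\Delta,b)$ is the genuinely delicate step and is the core of the argument for \cite[Theorem~8.1.11]{TodWalks}, which I would reproduce. Granting the oscillation lemma, the coloring $c$ attains every $\tau\in\omega$ on every cofinal $X\s\kappa$, and hence witnesses $\kappa\nrightarrow[\kappa]^2_\omega$.
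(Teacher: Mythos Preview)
The paper does not prove this statement: it is recorded as a Fact with a bare citation to \cite[Theorem~8.1.11]{TodWalks}, so there is no in-paper argument against which to compare your proposal. Your plan to reproduce Todorcevic's oscillation argument is the natural route and is correct in its architecture: use $\chi(\kappa)>1$ to fix a nontrivial $C$-sequence, build the oscillation coloring from the walk, and argue by contraposition that a colour missed on a cofinal $X$ would manufacture the forbidden covering data $(\Delta,b)$.

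Two remarks. First, a minor imprecision: ``a cofinal $X'\subseteq X\cap\beta^*$'' cannot mean cofinal in $\kappa$ once $X'\subseteq\beta^*<\kappa$; in the actual argument one does not fix a single $\beta^*$ but rather works with a stationary family of such situations (or, equivalently, inside a tower of elementary submodels), and the stabilization of $\last{\alpha}{\beta}$ is organized accordingly. Second, and more substantively, you yourself flag the conversion of ``$c$ misses $n_0$ on $[X]^2$'' into a cofinal $\Delta$ covered pointwise by single $C_{b(\alpha)}$'s as ``the genuinely delicate step'' and say only that you would reproduce it from \cite{TodWalks}. That deferral is precisely the content of the Fact; what you have written is a correct table of contents for Todorcevic's proof rather than the proof itself. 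Since the paper is content to cite the result outright, your proposal is in that sense no less complete than the paper's own treatment, but it should not be mistaken for a self-contained argument.
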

\begin{fact}[Lambie-Hanson and Rinot, \cite{paper35}]\label{fact29} If $\chi(\kappa)\le1$, then $\kappa$ is (in fact, greatly) Mahlo and for 
every $C$-sequence $\langle C_\beta\mid \beta\in\reg(\kappa)\rangle$ over $\reg(\kappa)$,
there exists a club $D\s\kappa$ satisfying the following.
For every $\alpha<\kappa$, there exists $\beta\in\reg(\kappa)$, such that $D\cap\alpha\s C_\beta$.
\end{fact}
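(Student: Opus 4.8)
The statement bundles together two assertions — that $\kappa$ is greatly Mahlo, and that every $C$-sequence over $\reg(\kappa)$ carries the stated club $D$ — and I would split on the value of $\chi(\kappa)$. If $\chi(\kappa)=0$, then $\kappa$ is weakly compact, hence greatly Mahlo; for the club, fix a weakly compact embedding $j\colon\mathcal{M}\to\mathcal{N}$ with critical point $\kappa$ and with $\vec C:=\langle C_\beta\mid\beta\in\reg(\kappa)\rangle\in\mathcal{M}$, set $D:=j(\vec C)_\kappa$, a club in $\kappa$, and, given $\alpha<\kappa$, note that $\mathcal{N}\models$ ``$\exists\beta\in\reg(j(\kappa))\,[D\cap\alpha\s j(\vec C)_\beta]$'' as witnessed by $\beta=\kappa$; since $\alpha$ is fixed by $j$, elementarity returns a $\beta\in\reg(\kappa)$ with $D\cap\alpha\s C_\beta$. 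So from now on assume $\chi(\kappa)=1$, which by Definition~\ref{c_seq_num_def} supplies the following tool: for every $C$-sequence $\langle C_\beta\mid\beta<\kappa\rangle$ over $\kappa$, there are $\Delta\in[\kappa]^\kappa$ and $b\colon\kappa\to\kappa$ with $\Delta\cap\alpha\s C_{b(\alpha)}$ for every $\alpha<\kappa$.

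Granting provisionally that $\kappa$ is Mahlo, here is the plan for the club $D$. Given a $C$-sequence $\langle C_\beta\mid\beta\in\reg(\kappa)\rangle$ over $\reg(\kappa)$, extend it to a $C$-sequence $\langle C^\sharp_\beta\mid\beta<\kappa\rangle$ over $\kappa$ by putting $C^\sharp_\beta:=C_\beta$ for $\beta\in\reg(\kappa)$, $C^\sharp_{\gamma+1}:=\{\gamma\}$, $C^\sharp_0:=\emptyset$, and, for each limit $\beta\notin\reg(\kappa)$, letting $C^\sharp_\beta$ be a club in $\beta$ disjoint from $\reg(\kappa)$ — which is possible because $\reg(\kappa)\cap\beta$ is nonstationary in $\beta$ whenever $\beta$ is not a regular cardinal. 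This arranges $\acc(C^\sharp_\beta)\cap\reg(\kappa)=\emptyset$ for every $\beta\notin\reg(\kappa)$. Now feed $\langle C^\sharp_\beta\mid\beta<\kappa\rangle$ into the tool, obtaining $\Delta$ and $b$, and set $D:=\acc(\Delta)$. The key observation is that $b(\alpha)\in\reg(\kappa)$ for every $\alpha\in\reg(\kappa)\cap\acc(\Delta)$: first $b(\alpha)\ge\alpha$ because $\sup(\Delta\cap\alpha)=\alpha$; and if $b(\alpha)>\alpha$ then, using $\sup(\Delta\cap\alpha)=\alpha$ together with the closedness of $C^\sharp_{b(\alpha)}$ in $b(\alpha)$, we get $\alpha\in\acc(C^\sharp_{b(\alpha)})$, so $b(\alpha)\notin\reg(\kappa)$ cannot hold, as that would put $\alpha\in\acc(C^\sharp_{b(\alpha)})\cap\reg(\kappa)=\emptyset$. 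Hence for such $\alpha$ we have $C^\sharp_{b(\alpha)}=C_{b(\alpha)}$ and
\[ D\cap\alpha=\acc(\Delta)\cap\alpha\s\acc(\Delta\cap\alpha)\s\acc(C_{b(\alpha)})\s C_{b(\alpha)}. \]
Since $\kappa$ is Mahlo, $\reg(\kappa)\cap\acc(\Delta)$ is cofinal in $\kappa$; so, given any $\alpha<\kappa$, choosing $\alpha'\ge\alpha$ in this set gives $D\cap\alpha\s D\cap\alpha'\s C_{b(\alpha')}$ with $b(\alpha')\in\reg(\kappa)$, which is what is wanted.

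It remains to show $\kappa$ is greatly Mahlo, and this is where I expect the real work to lie. That $\kappa$ is not a successor is quick: were $\kappa=\mu^+$, one could take a $C$-sequence $\langle C_\beta\mid\beta<\kappa\rangle$ with $\otp(C_\beta)\le\mu$ throughout, and then the $\Delta,b$ produced by the tool would be contradictory, since $\{\alpha<\kappa\mid\otp(\Delta\cap\alpha)=\alpha\}$ is a club in $\kappa$ while $\otp(\Delta\cap\alpha)\le\otp(C_{b(\alpha)})\le\mu$ for every $\alpha$ in it — contradicting $\chi(\kappa)=1$. For the full conclusion one has to rule out a failure of $\kappa$ at each level of the Mahlo hierarchy, and the guiding idea is that such a failure — a club of singular ordinals at the base level, or the nonstationarity of the relevant iterate of the Mahlo operation higher up — can be encoded into a $C$-sequence whose structure (via order-type, respectively coherence, considerations) is incompatible with the existence of any $\Delta,b$ as in the tool. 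Identifying the appropriate $C$-sequence at each level and verifying this incompatibility is the technical heart of the matter, and is the step I expect to require the most care; it is carried out in \cite{paper35}.
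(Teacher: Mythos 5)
The paper offers no proof of this statement --- it is quoted as a Fact from \cite{paper35} --- so there is nothing in the paper to compare against; the proposal has to be judged on its own. Your derivation of the club-covering half is essentially correct. In the $\chi(\kappa)=0$ case, the embedding argument works once one insists (as is standard) that $V_\kappa\s\mathcal M$, so that $D\cap\alpha\in\mathcal M$ is fixed by $j$ and can serve as the parameter to which elementarity is applied, and so that $\mathcal N$ correctly sees $\kappa\in\reg(j(\kappa))$, making $D:=j(\vec C)(\kappa)$ a genuine club. In the $\chi(\kappa)=1$ case, extending $\vec C$ to a $C$-sequence $\langle C^\sharp_\beta\mid\beta<\kappa\rangle$ with $\acc(C^\sharp_\beta)\cap\reg(\kappa)=\emptyset$ for $\beta\notin\reg(\kappa)$, applying the covering property, and setting $D:=\acc(\Delta)$ does produce the desired club: for $\alpha\in\reg(\kappa)\cap\acc(\Delta)$ one gets $b(\alpha)=\alpha$ or $\alpha\in\acc(C^\sharp_{b(\alpha)})$, hence $b(\alpha)\in\reg(\kappa)$ and $D\cap\alpha\s\acc(C_{b(\alpha)})\s C_{b(\alpha)}$.

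The gap is in the first half. All you actually establish about the Mahlo degree of $\kappa$ is that $\kappa$ is not a successor cardinal; stationarity of $\reg(\kappa)$ is not shown, and the full ``greatly Mahlo'' claim is only gestured at and then handed back to \cite{paper35}, the very source the Fact is quoting. This matters structurally: the club argument above invokes ``$\reg(\kappa)\cap\acc(\Delta)$ is cofinal,'' which is precisely Mahloness, so without a self-contained proof of that the second half is not closed either. Note also that the order-type trick behind ``not a successor'' does not propagate: for a singular $\alpha\in\acc(\Delta)$ with $\otp(\Delta\cap\alpha)=\alpha$ and a $C$-sequence with $\otp(C_\beta)=\cf(\beta)$, one only obtains $\alpha\le\cf(b(\alpha))$, which by itself is not a contradiction. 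Getting from $\chi(\kappa)\le 1$ to ``greatly Mahlo'' is the actual content of the cited theorem and needs a real argument, not a pointer to \cite{paper35}.
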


\begin{lemma}\label{lemma212} If $\chi(\kappa)\le 1$, then $\pl_1(\kappa,1,2)$ fails.
\end{lemma}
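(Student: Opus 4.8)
The goal is to show that $\chi(\kappa)\le 1$ refutes $\pl_1(\kappa,1,2)$; equivalently, that no transformation $\mathbf t:[\kappa]^2\to[\kappa]^3$ can satisfy Definition~\ref{fulldefpl1} for $\theta=1$, $\chi=2$. Since $\theta=1$, the value $\tau^*$ is forced to be $0$, so effectively $\mathbf t$ produces a pair $(\alpha^*,\beta^*)\in[\kappa]^2$ with $\alpha^*\le\alpha<\beta^*\le\beta$, and the requirement is: for every family $\mathcal A\subseteq[\kappa]^1$ of $\kappa$ many pairwise disjoint singletons — that is, essentially an arbitrary set $\Delta\in[\kappa]^\kappa$ — there is a stationary $S\subseteq\kappa$ such that for every $(\alpha^*,\beta^*)\in[S]^2$ there is a pair $(\{\delta\},\{\varepsilon\})\in[\mathcal A]^2$ with $\mathbf t(\delta,\varepsilon)=(0,\alpha^*,\beta^*)$. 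So the plan is: \emph{assume toward a contradiction} that such a $\mathbf t$ exists, and use the structural consequence of $\chi(\kappa)\le 1$ from Fact~\ref{fact29} (the ``trivial'' $C$-sequence over $\reg(\kappa)$ being almost-everywhere captured by a single club point) to manufacture a single set $\Delta$ for which \emph{no} stationary $S$ can work.

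First I would fix, for each $\beta\in\reg(\kappa)$, some natural $C$-sequence — e.g.\ $C_\beta=\beta$ for $\beta$ a limit, handled via Fact~\ref{fact29} — and obtain the club $D\subseteq\kappa$ with the property that for every $\alpha<\kappa$ there is $\beta\in\reg(\kappa)$ with $D\cap\alpha\subseteq C_\beta$. The idea is to let $\Delta:=D$ (thinned out to guarantee pairwise disjoint singletons, which is automatic) and examine what $\mathbf t$ does on pairs from $\Delta$. For $(\delta,\varepsilon)\in[\Delta]^2$ with $\mathbf t(\delta,\varepsilon)=(0,\alpha^*,\beta^*)$, the first bullet forces $\alpha^*\le\delta<\beta^*\le\varepsilon$; in particular $\beta^*\in(\delta,\varepsilon]$ and $\alpha^*\le\delta$. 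The key counting obstruction: since $\Delta=D$ is a club of size $\kappa$ but $\kappa$ is greatly Mahlo (hence in particular has many regular cardinals below it), the club $D$ is, below many points, absorbed into a single $C_\beta$ with $\beta\in\reg(\kappa)$ — and the structure of walks relative to such an almost-trivial $C$-sequence severely constrains which pairs $(\alpha^*,\beta^*)$ can be hit. More concretely, I would aim to show that for any candidate stationary $S$, one can pick $\alpha^*<\beta^*$ in $S$ sitting above a point of $D$ past which $D$ is captured into one $C_\beta$, and then argue that the only pairs $(\delta,\varepsilon)$ from $\Delta$ whose $\mathbf t$-image could equal $(0,\alpha^*,\beta^*)$ are too few — indeed, the requirement $\alpha^*\le\delta<\beta^*$ pins $\delta$ into the interval $[\alpha^*,\beta^*)\cap\Delta$, and as $\alpha^*,\beta^*$ range over $[S]^2$ there are $\kappa$ many pairs to be realized but only boundedly many (or at any rate too few, via a fixed-point/diagonalization argument on $D$) admissible source pairs.

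The cleanest route is probably a direct diagonalization: build $\Delta$ and simultaneously a club $C\subseteq\kappa$ such that for every $\alpha\in C$ and every $(\delta,\varepsilon)\in[\Delta]^2$ with $\delta<\alpha\le\varepsilon$, we have $\mathbf t(\delta,\varepsilon)\ne(0,\alpha^*,\beta^*)$ whenever $\beta^*>\alpha$ — i.e.\ arrange that pairs of $\Delta$ straddling a club point $\alpha$ always have second coordinate $\le\alpha$, so no pair from $S\cap C$ above two such straddle-points gets realized. The machinery of Facts~\ref{fact1}, \ref{fact2} and Lemma~\ref{last} on walks, together with the $\chi(\kappa)\le1$ hypothesis funnelling $D$ into single $C_\beta$'s, is exactly what lets one control $\mathbf t$'s output coordinates along $\Delta$; recall that the $\pl_1$ transformations of interest in this circle of ideas are built from walks, so $\mathbf t(\delta,\varepsilon)$'s coordinates live among $\{\last{\delta}{\varepsilon}, \text{elements of } \im(\tr(\delta,\varepsilon)), \lambda\text{-values}\}$, all of which degenerate when the underlying $C$-sequence is trivialized. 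Then stationarity of $S$ is used only to find a single pair $(\alpha^*,\beta^*)\in[S\cap C]^2$ with both coordinates large relative to a straddle-point, and we get our contradiction with the second bullet of Definition~\ref{fulldefpl1}.

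\textbf{Main obstacle.} The delicate point is pinning down exactly how much control $\chi(\kappa)\le 1$ buys over an \emph{arbitrary} purported $\mathbf t$ — Definition~\ref{fulldefpl1} does not stipulate that $\mathbf t$ comes from walks, so one cannot literally substitute the $\last{}{}$/$\tr$ formulas; instead one must argue abstractly that the first-bullet inequalities $\alpha^*\le\alpha<\beta^*\le\beta$, combined with the near-triviality of club structure forced by Fact~\ref{fact29}, leave too little room for $\kappa$ many target pairs $(\alpha^*,\beta^*)\in[S]^2$ to be realized from a carefully chosen $\Delta$ of size $\kappa$. Making this ``too little room'' precise — most likely via a pressing-down argument on $S$ against the function $(\alpha^*,\beta^*)\mapsto$ (a code for an admissible source pair $(\delta,\varepsilon)$), showing it is regressive on a stationary set and hence constant, contradicting injectivity of $\mathbf t$ restricted to $[\Delta]^2$ onto its image — is where the real work lies.
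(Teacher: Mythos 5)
Your outline starts in the right place (assume a witness $\mathbf t$ to $\pl_1(\kappa,1,2)$, invoke Fact~\ref{fact29}, aim for a contradiction), but the pivotal move is missing and the move you do propose is a non-starter. You write that you would ``fix, for each $\beta\in\reg(\kappa)$, some natural $C$-sequence — e.g.\ $C_\beta=\beta$'' and then apply Fact~\ref{fact29}. With the trivial $C$-sequence $C_\beta=\beta$, the conclusion of Fact~\ref{fact29} is vacuous (every club $D$ works, with $\beta$ taken large), so no information about $\mathbf t$ can be extracted this way. The entire leverage in $\chi(\kappa)\le1$ is that it applies to an \emph{arbitrary} $C$-sequence over $\reg(\kappa)$, and the paper exploits this by building the $C$-sequence \emph{out of} $\mathbf t$ itself — precisely what you cannot do with a fixed natural one.

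Concretely, the paper first extracts from $\mathbf t$ a function $s:[\kappa]^2\rightarrow\kappa$ such that $\alpha<s(\alpha,\beta)<\beta$ for $(\alpha,\beta)\in\kappa\circledast\acc(\kappa)$ and such that $s``[A]^2$ is stationary for every cofinal $A$ (this is where the second bullet of Definition~\ref{fulldefpl1} is used, on the family of singletons from a discrete cofinal subset of $A$, reading off only the $\beta^*$-coordinate). The crucial step you are missing is then to define, for each uncountable $\beta\in\reg(\kappa)$, $C_\beta$ as the club of closure points of $s(\cdot,\beta)$; the point is that \emph{by construction} $s(\alpha,\beta)\notin C_\beta$ for any $\alpha<\beta$, because $\alpha<s(\alpha,\beta)$. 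Feeding this tailored $C$-sequence into Fact~\ref{fact29} yields a club $D$, from which one recursively builds a discrete cofinal $A\s\reg(\kappa)\setminus\omega_1$ with $D\cap(\beta^-+1)\s C_\beta$ for $\beta\in A$ (where $\beta^-=\sup(A\cap\beta)$), together with an auxiliary club $E$ agreeing with $C_\beta$ on $(\beta^-,\beta)$. Then for any $\beta^*\in s``[A]^2\cap D\cap E$ witnessed by $(\alpha,\beta)\in[A]^2$, both of the possible positions of $\beta^*$ relative to $\beta^-$ force $\beta^*\in C_\beta$, contradicting $s(\alpha,\beta)\notin C_\beta$. Your alternative closing argument — a pressing-down on $(\alpha^*,\beta^*)\mapsto$ ``code for an admissible source pair'' aiming at a constant value and then ``injectivity of $\mathbf t$ restricted to $[\Delta]^2$'' — does not go through: $\mathbf t$ is emphatically not injective on $[\Delta]^2$ (the whole point of $\pl_1$ is that it collapses $\kappa$ many source pairs onto a single target), and there is no regressivity to press down on unless the $C$-sequence has been rigged as above. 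So the proposal as written has a genuine gap; the missing idea is the ``self-referential'' choice of $C$-sequence.
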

\begin{proof} Suppose that $\pl_1(\kappa,1,2)$ holds.
\begin{claim} There exists a function $s:[\kappa]^2\rightarrow\kappa$ satisfying the following:
\begin{enumerate}
\item for all $(\alpha,\beta)\in\kappa\circledast\acc(\kappa)$, $\alpha<s(\alpha,\beta)<\beta$;
\item for every cofinal $A\s\kappa$, $s``[A]^2$ is stationary.
\end{enumerate}
\end{claim}
\begin{proof}
Fix $\mathbf t:[\kappa]^2\rightarrow[\kappa]^3$ witnessing $\pl_1(\kappa,1,2)$.
Define $s:[\kappa]^2\rightarrow\kappa$ by letting $s(\alpha,\beta):=\beta^*$ 
whenever $\mathbf t(\alpha,\beta)=(\tau^*,\alpha^*,\beta^*)$ with $\alpha<\beta^*<\beta$,
and letting $s(\alpha,\beta):=\alpha+1$, otherwise.
To verify Clause~(2), let $A$ be an arbitrary cofinal subset of $\kappa$.
Set $C:=\acc^+(A)$ and $A':=A\setminus C$, so that $A'$ is a discrete cofinal subset of $A$.
As $\{ \{\alpha\}\mid \alpha\in A'\}$ is a subset of $[\kappa]^1$ consisting of $\kappa$ many pairwise disjoint sets,
we may now fix a stationary $S\s\kappa$ such that, for all $(\alpha^*,\beta^*)\in[S]^2$,
there exists $(\alpha,\beta)\in[A']^2$ with $\mathbf t(\alpha,\beta)=(0,\alpha^*,\beta^*)$.
We claim that $s``[A]^2$ covers the stationary set $S^*:=(S\cap C)\setminus\{\min(S)\}$.

To see this, let $\beta^*\in S^*$ be arbitrary. 
Put $\alpha^*:=\min(S)$.
Fix $(\alpha,\beta)\in[A']^2$ such that $\mathbf t(\alpha,\beta)=(0,\alpha^*,\beta^*)$.
We know that $\alpha^*\le\alpha<\beta^*\le\beta$
and that $\beta^*\in C$ while $\beta\in A\setminus C$. So $\alpha<\beta^*<\beta$,
and hence $s(\alpha,\beta)=\beta^*$, as sought.
\end{proof}

Suppose that $\chi(\kappa)\le1$, and yet there exists a 
function $s:[\kappa]^2\rightarrow \kappa$ as in the preceding claim.
Set $C_\omega:=\omega$. For any uncountable $\beta\in\reg(\kappa)$, let $$C_\beta:=\{\gamma<\beta\mid \forall \alpha<\gamma [ s(\alpha, \beta) < \gamma]\}$$
be the club of closure points of the function $s(\cdot, \beta)$.
Note that, for any $\alpha<\beta$, $s(\alpha,\beta)\notin C_\beta$, since $\alpha<s(\alpha,\beta)$.

Now, by Fact~\ref{fact29},  we may fix a club $D\s\kappa$ with the property that, for every $\alpha<\kappa$, there exists $\beta\in\reg(\kappa)$ with $D\cap\alpha\s C_\beta$.

Recursively build a (discrete) subset $A\s(\{0\}\cup(\reg(\kappa)\setminus\omega_1))$ such that, for any nonzero $\beta\in A$, $\beta^-:=\sup(A\cap \beta)$ is smaller than $\beta$, and  $D\cap(\beta^-+1)\s C_\beta$. 
Then, let $E$ be the closure of $\bigcup\{C_\beta\setminus\beta^- \mid  \beta\in A, \beta\neq0\}$ in $\kappa$,
and note that, for every $\beta\in A$, $E\cap(\beta^-,\beta)=C_\beta\cap(\beta^-,\beta)$.

As $A$ is cofinal, $S:=s``[A]^2$ is stationary,
so that we may pick $\beta^*\in S\cap D\cap E$.
Fix a pair $(\alpha,\beta)\in[A]^2$ with $s(\alpha,\beta)=\beta^*$.

\begin{claim}$\beta^*\in C_\beta$.
\end{claim}
\begin{proof} As $(\alpha,\beta)\in[A]^2$, we we know that $\beta$ is a regular uncountable cardinal.
So, by the hypothesis on $s$, $\alpha<\beta^*<\beta$.
Now, there are two cases to consider:

$\br$ If  $\beta^*\le \beta^-$, then $\beta^*\in D\cap(\beta^-+1)\s C_\beta$.

$\br$ Otherwise, $\beta^-<\beta^*<\beta$, so that  $\beta^*\in E\cap(\beta^-,\beta)= C_\beta\cap(\beta^-,\beta)$.
\end{proof}

However, we have observed earlier that $s(\alpha,\beta)\notin C_\beta$,
meaning that $\beta^*\notin C_\beta$. This contradicts the preceding claim.
\end{proof}

\subsection{Relationship to Shelah's principle $\pr_1$}\label{subsectionpr1}

\begin{defn}[Shelah, \cite{shelah_productivity}]\label{def_pr1}
$\pr_1(\kappa, \kappa, \theta, \chi)$ asserts the existence of a coloring $c:[\kappa]^2 \rightarrow \theta$ such that for every  $\sigma<\chi$, every family 
$\mathcal{A}  \subseteq [\kappa]^{\sigma}$ consisting of $\kappa$ many pairwise disjoint
  sets, and every $i < \theta$, there is $(a,b) \in [\mathcal{A}]^2$ such
  that $c[a \times b] = \{i\}$.
\end{defn}

Note that $\pr_1(\kappa,\kappa,\theta,2)$ is equivalent to $\kappa\nrightarrow[\kappa]^2_\theta$.

\begin{lemma}\label{thm214} Any of the following implies that $\pr_1(\kappa,\kappa,\theta,\chi)$ holds:
\begin{enumerate}
\item $\pl_1(\kappa,\theta,\chi)$;
\item $\pl_1(\kappa,1,\chi)$ and $\kappa\nrightarrow[\stat(\kappa)]^2_\theta$;
\item $\pl_1(\kappa,\cf(\theta),\chi)$ and $\kappa\nrightarrow[\stat(\kappa)]^2_\eta$ for all $\eta<\theta$;
\item $\pl_1(\kappa,\nu,\chi)$ and there exists a $\nu^+$-cc poset $\mathbb P$ such that $\Vdash_{\mathbb P}\kappa\nrightarrow[\kappa]^2_\theta$.
\end{enumerate}
\end{lemma}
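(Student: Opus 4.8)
The plan is to prove all four clauses uniformly by exhibiting, for a given family $\mathcal A$ and a target color $i<\theta$, a pair $(a,b)\in[\mathcal A]^2$ with the coloring constantly $i$ on $a\times b$. In each case we have $\pl_1(\kappa,\theta',\chi)$ for an appropriate $\theta'$, witnessed by some $\mathbf t:[\kappa]^2\to[\kappa]^3$, together with a ``Ramsey-type'' coloring $d:[\kappa]^2\to\theta$ (on either cofinal, stationary, or generically stationary sets) that we will compose with $\mathbf t$. The natural candidate for the witnessing coloring of $\pr_1(\kappa,\kappa,\theta,\chi)$ is $c(\alpha,\beta):=d(\alpha^*,\beta^*)$ where $\mathbf t(\alpha,\beta)=(\tau^*,\alpha^*,\beta^*)$, possibly with a correction term reading off $\tau^*$ to handle the color when $\theta'>1$. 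The point is that the second bullet of Definition~\ref{fulldefpl1} lets us pull the target pair $(\alpha^*,\beta^*)$ out of a prescribed stationary set $S$ and the target value $\tau^*$ out of $\min\{\theta',\alpha^*\}$, so we get complete freedom over the argument fed to $d$.

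First I would handle Clause~(1), which is the cleanest: given $\mathcal A\s[\kappa]^\sigma$ with $\sigma<\chi$, and $i<\theta$, let $c:[\kappa]^2\to\theta$ be a witness for $\kappa\nrightarrow[\kappa]^2_\theta$ obtained from $\pl_1(\kappa,\theta,\chi)$ via Theorem~A (or more directly: $\pl_1(\kappa,\theta,\chi)$ trivially gives $\kappa\nrightarrow[\kappa]^2_\theta$ by taking $\sigma=1$ and the identity reading). Actually the honest route is to build the $\pr_1$-coloring from scratch: fix $\mathbf t$ witnessing $\pl_1(\kappa,\theta,\chi)$, fix any coloring $e:[\kappa]^2\to\theta$, and set $c(\alpha,\beta):=\tau^*$ when $\mathbf t(\alpha,\beta)=(\tau^*,\alpha^*,\beta^*)$. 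Given $\mathcal A$ and $i<\theta$, apply the second bullet to get stationary $S\s\kappa$; pick any $(\alpha^*,\beta^*)\in[S]^2$ with $\alpha^*>i$ (possible since $S$ is stationary, hence unbounded, and $\theta\le\kappa$ means $i<\kappa$), so $i<\min\{\theta,\alpha^*\}$, and then get $(a,b)\in[\mathcal A]^2$ with $\mathbf t[a\times b]=\{(i,\alpha^*,\beta^*)\}$, whence $c[a\times b]=\{i\}$. This already proves Clause~(1) with no external partition relation needed — so the interesting clauses are (2)--(4), where $\pl_1$ only gives few colors and the many-colors strength is imported from the partition relation.

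For Clause~(2), I would take $\mathbf t$ witnessing $\pl_1(\kappa,1,\chi)$ and $d:[\kappa]^2\to\theta$ witnessing $\kappa\nrightarrow[\stat(\kappa)]^2_\theta$, and set $c(\alpha,\beta):=d(\alpha^*,\beta^*)$ where $\mathbf t(\alpha,\beta)=(\tau^*,\alpha^*,\beta^*)$; the first bullet guarantees $\alpha^*\le\alpha<\beta^*$, so this is well-defined on $[\kappa]^2$. Given $\mathcal A$ and $i<\theta$: the second bullet of $\pl_1(\kappa,1,\chi)$ produces a stationary $S$ such that every $(\alpha^*,\beta^*)\in[S]^2$ is realized as $\mathbf t[a\times b]$ for some $(a,b)\in[\mathcal A]^2$ (the $\tau^*$-clause is vacuous since $\theta'=1$ forces $\tau^*=0$). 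Since $S$ is stationary and $d$ witnesses $\kappa\nrightarrow[\stat(\kappa)]^2_\theta$, there is $(\alpha^*,\beta^*)\in[S]^2$ with $d(\alpha^*,\beta^*)=i$; pull back to $(a,b)$ and conclude $c[a\times b]=\{i\}$. Clause~(3) is the same idea but with the sup-of-cofinalities twist: $\cf(\theta)$ many colors from $\pl_1$ together with $\kappa\nrightarrow[\stat(\kappa)]^2_\eta$ for each $\eta<\theta$ lets one interleave — partition $\theta$ into $\cf(\theta)$ intervals of size $<\theta$, use the $\cf(\theta)$-valued $\mathbf t$-output to select the interval via $\tau^*$, and use a family of $\eta$-colorings (for the relevant $\eta<\theta$) composed with $(\alpha^*,\beta^*)$ to hit the target within the interval; the stationarity of $S$ is what feeds the $\kappa\nrightarrow[\stat]^2_\eta$ colorings. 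Clause~(4) is the generic version: run the same composition $c(\alpha,\beta):=d(\alpha^*,\beta^*)$ with $d$ the $\mathbb P$-name forced to witness $\kappa\nrightarrow[\kappa]^2_\theta$, but argue in a forcing extension by the $\nu^+$-cc poset $\mathbb P$; the stationary $S\s\kappa$ obtained from $\pl_1(\kappa,\nu,\chi)$ in $V$ remains stationary in $V[\mathbb P]$ (the partition structure of $\mathbf t$ is absolute, and $\mathbb P$ being $\nu^+$-cc with $\nu<\kappa$ means it is $\kappa$-cc, hence preserves stationarity of subsets of $\kappa$), and in $V[\mathbb P]$ the coloring $d$ hits $i$ on some $[S]^2$-pair, giving $(a,b)$; since $\mathcal A$ and the eventual pair $(a,b)$ live in $V$, the relation $c[a\times b]=\{i\}$ is absolute back to $V$.

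The main obstacle I anticipate is Clause~(3): wiring together the $\cf(\theta)$-valued transformation with the family of small partition relations so that the composed coloring genuinely realizes \emph{every} $i<\theta$ requires choosing, uniformly in the target interval, a single stationary set $S$ that works for all the auxiliary colorings at once — but this is not actually a problem, since the second bullet of $\pl_1$ already delivers one $S$ good for all pairs in $[S]^2$, and we are free to first run the $\pl_1$ application and only afterwards apply the (finitely or infinitely many) partition colorings to pairs inside that fixed $S$. The genuinely delicate point is bookkeeping the ``$\tau^*<\min\{\theta',\alpha^*\}$'' side condition: one must ensure the chosen $\alpha^*$ exceeds the relevant index (in Clause~(3), the interval-label $<\cf(\theta)\le\theta\le\kappa$), which is automatic by unboundedness of $S$. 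For Clause~(4) the only subtlety is checking that ``$\nu^+$-cc'' suffices for stationary-set preservation, which follows since $\nu^+\le\kappa$ forces $\kappa$-cc; and that the name $d$ can be taken to have all its colors below $\theta$ already decided, which is standard. I expect no step to require more than the absoluteness facts and the defining clauses of $\pl_1(\kappa,\theta,\chi)$ quoted above.
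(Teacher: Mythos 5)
Your treatment of Clauses (1)--(3) follows the paper's proof essentially verbatim: (1) reads off $\tau^*$ directly; (2) composes a witness of $\kappa\nrightarrow[\stat(\kappa)]^2_\theta$ with the $(\alpha^*,\beta^*)$-projection of $\mathbf t$; (3) uses the $\tau^*$-slot (a value $<\cf(\theta)$) to select from an increasing family $\langle c_i:[\kappa]^2\rightarrow\eta_i\mid i<\cf(\theta)\rangle$ with $\eta_i\nearrow\theta$ and then composes with $(\alpha^*,\beta^*)$; and your observation that the unboundedness of $S$ lets one arrange $\alpha^*$ above any fixed index is exactly how the side condition $\tau^*<\min\{\cdot,\alpha^*\}$ is handled.

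Clause (4), however, has a genuine gap. You set $c(\alpha,\beta):=d(\alpha^*,\beta^*)$ with $d$ ``the $\mathbb P$-name,'' but that does not define a coloring in the ground model, which is what $\pr_1(\kappa,\kappa,\theta,\chi)$ demands. The appeal to absoluteness does not rescue this: absoluteness transfers a first-order fact about $V$-objects, and the pair $(a,b)$ does live in $V$, but the coloring $c$ itself does not, so ``$c[a\times b]=\{i\}$ is absolute back to $V$'' is not a statement about a $V$-coloring. Your parenthetical assurance that ``the name $d$ can be taken to have all its colors below $\theta$ already decided, which is standard'' is false in general: a nontrivial $\nu^+$-cc poset may genuinely add the coloring, and no single condition decides all values. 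The paper's mechanism is different and is precisely where the $\nu$ in $\pl_1(\kappa,\nu,\chi)$ earns its keep: by $\nu^+$-cc, the set $d(\alpha,\beta):=\{\tau<\theta\mid\exists p\,(p\forces_{\mathbb P}\dot c(\check\alpha,\check\beta)=\check\tau)\}$ of \emph{possibly forced} values has size $\le\nu$, so one can fix a ground-model function $e:[\kappa]^3\to\theta$ enumerating each $d(\alpha,\beta)$ in its first $\nu$ slots, and take $e\circ\mathbf t$ as the witness. To hit a target $\tau$, one reflects the forced partition relation to find $(\alpha^*,\beta^*)\in[S]^2$ (with $\alpha^*>\nu$) and $i<\nu$ with $e(i,\alpha^*,\beta^*)=\tau$, and then asks $\mathbf t$ to realize $(i,\alpha^*,\beta^*)$. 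Your proposal never uses the $\nu$ range of the $\tau^*$-slot in Clause (4), which is the symptom of the missing step.
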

\begin{proof} 
(1) Let $\mathbf t:[\kappa]^2\rightarrow[\kappa]^3$  be a witness to $\pl_1(\kappa,\theta,\chi)$.
Define $c^*:[\kappa]^2\rightarrow\theta$ via $c^*(\alpha,\beta):=\tau^*$ whenever $\mathbf t(\alpha,\beta)=(\tau^*,\alpha^*,\beta^*)$.
Then $c^*$ witnesses $\pr_1(\kappa,\kappa,\theta,\chi)$.

(2) Let $\mathbf t:[\kappa]^2\rightarrow[\kappa]^3$ be a witness to $\pl_1(\kappa,1,\chi)$,
and let $c:[\kappa]^2\rightarrow\theta$ be a witness to $\kappa\nrightarrow[\stat(\kappa)]^2_\theta$.
Define $c^*:[\kappa]^2\rightarrow\theta$ via $c^*(\alpha,\beta):=c(\alpha^*,\beta^*)$ whenever $\mathbf t(\alpha,\beta)=(\tau^*,\alpha^*,\beta^*)$.
Then $c^*$ witnesses $\pr_1(\kappa,\kappa,\theta,\chi)$.

(3) Let $\mathbf t:[\kappa]^2\rightarrow[\kappa]^3$ be a witness to $\pl_1(\kappa,\cf(\theta),\chi)$.
By Clause~(1), we may assume that $\theta$ is singular.
Thus, let $\langle \eta_i\mid i<\cf(\theta)\rangle$ be an increasing sequence of cardinals, converging to $\theta$.
For each $i<\cf(\theta)$, let $c_i:[\kappa]^2\rightarrow\eta_i$ be a witness to $\kappa\nrightarrow[\stat(\kappa)]^2_{\eta_i}$.
Define $c^*:[\kappa]^2\rightarrow\theta$ via $c^*(\alpha,\beta):=c_i(\alpha^*,\beta^*)$ whenever $\mathbf t(\alpha,\beta)=(i,\alpha^*,\beta^*)$.
Then $c^*$ witnesses $\pr_1(\kappa,\kappa,\theta,\chi)$.

(4) By Clause~(1), we may assume that $\nu<\theta$. Let $\mathbf t:[\kappa]^2\rightarrow[\kappa]^3$  be a witness to $\pl_1(\kappa,\nu,\chi)$.
Suppose that $\mathbb P$ is a $\nu^+$-cc poset such that $\Vdash_{\mathbb P}\kappa\nrightarrow[\kappa]^2_\theta$.
Fix a $\mathbb P$-name $\dot c$ for a coloring witnessing $\kappa\nrightarrow[\kappa]^2_\theta$ in the forcing extension by $\mathbb P$.
Define $d:[\kappa]^2\rightarrow\mathcal P(\theta)$ via $$d(\alpha,\beta):=\{\tau<\theta\mid \exists p(p\forces_{\mathbb P}\dot c(\check\alpha,\check\beta)=\check\tau)\}.$$ 
As $\mathbb P$ is $\nu^+$-cc, $|d(\alpha,\beta)|\le\nu$ for every $(\alpha,\beta)\in[\kappa]^2$, so that, we may define a function $e:[\kappa]^3\rightarrow\theta$ such that, all $(\alpha,\beta)\in[\kappa\setminus\nu]^2$, $d(\alpha,\beta)\s \{ e(i,\alpha,\beta)\mid i<\tau\}$.
It follows that $e\circ\mathbf t$ witnesses $\pr_1(\kappa,\kappa,\theta,\chi)$.
\end{proof}

We now establish Theorem~D.

\begin{prop}\label{prop219} Suppose that $\kappa$ is weakly compact and $\chi\in\reg(\kappa)$.
\begin{enumerate}
\item There exists a cofinality-preserving forcing extension in which $\kappa$ is strongly inaccessible,
there exists a coherent $\kappa$-Souslin tree, $\pr_1(\kappa,\kappa,\kappa,\omega)$ holds, 
yet $\pl_1(\kappa)$ fails.
\item There exists a cofinality-preserving forcing extension in which $\kappa$ is strongly inaccessible,
there exists a nonreflecting stationary subset of $E^\kappa_\chi$, yet $\pl_1(\kappa,1,\allowbreak\chi^+)$ fails.
\end{enumerate}
\end{prop}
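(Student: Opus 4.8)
The plan is to realize each clause by forcing over the weakly compact cardinal $\kappa$ with a suitable iteration that kills weak compactness gently, adds the desired combinatorial object, and yet destroys $\pl_1$ in a controlled way. For Clause~(1), I would first force with the standard $\kappa$-strategically-closed, $\kappa^+$-cc iteration that adds a coherent $\kappa$-Souslin tree $T$ (e.g.\ the Jensen-style forcing, or the more modern microscopic construction), arranging along the way that $\diamondsuit(\kappa)$ survives and that the generic also seals an appropriate amount of $\pr_1(\kappa,\kappa,\kappa,\omega)$ — here one can either appeal to the fact that in $L$-like models $\pr_1$ holds outright, or interleave a coloring-adding step. The crucial extra ingredient is to also shoot, by initial segments, a \emph{club} $D\s\kappa$ that is disjoint from a fixed stationary set of inaccessibles of the ground model; more precisely, one arranges in the extension that $\chi(\kappa)\le 1$ still holds (the weakly compact $\kappa$ of $V$ becomes a greatly Mahlo non-weakly-compact cardinal with $C$-sequence number $\le 1$). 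Then Lemma~\ref{lemma212} applies verbatim: since $\chi(\kappa)\le 1$, $\pl_1(\kappa,1,2)$ fails, and a fortiori $\pl_1(\kappa)=\pl_1(\kappa,1,\aleph_0)$ fails. The coherent Souslin tree and $\pr_1$ are orthogonal to this and are preserved by the $\kappa^+$-cc (or $\kappa$-closed, depending on the factor) nature of the remaining forcing.

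For Clause~(2), the plan is analogous but now the obstruction is engineered differently, since a nonreflecting stationary subset of $E^\kappa_\chi$ is compatible with $\chi(\kappa)$ being large — indeed by Fact~\ref{nonreftm} it gives $\kappa\nrightarrow[\kappa]^2_\kappa$ — so I cannot merely invoke Lemma~\ref{lemma212}. Instead I would force to add, simultaneously, a nonreflecting stationary set $S\s E^\kappa_\chi$ (the usual $<\chi$-closed, $\chi^+$-strategically-closed forcing whose conditions are bounded approximations to $S$) \emph{and} a generic object witnessing the failure of $\pl_1(\kappa,1,\chi^+)$. The key point is that $\pl_1(\kappa,1,\chi^+)$ asks for a transformation handling families $\mathcal A\s[\kappa]^\sigma$ for all $\sigma<\chi^+$, in particular for $\sigma=\chi$; so it suffices to add, by initial segments, a single family $\mathcal A=\{a_\xi\mid\xi<\kappa\}$ of pairwise disjoint sets in $[\kappa]^\chi$ that is "generic enough" to defeat every potential transformation $\mathbf t$ coded in the ground model (here one uses that there are only $\kappa$-many nice names for such $\mathbf t$, appealing to the $\kappa^+$-cc of the whole forcing together with $2^{<\kappa}=\kappa$, which one sets up first). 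The diagonalization is over all pairs $(S^*,\mathbf t)$ with $S^*$ a candidate for the stationary set in Definition~\ref{fulldefpl1}; generically, for each such pair one can find in $\mathcal A$ a witness to $\mathbf t[a\times b]\neq\{(\tau^*,\alpha^*,\beta^*)\}$ for some admissible triple.

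The main obstacle I expect is the \emph{simultaneity}: one must verify that the forcing adding the generic bad family $\mathcal A$ (respectively the club $D$ in Clause~(1)) does not accidentally destroy the object we are trying to preserve — the coherent Souslin tree, $\pr_1$, or the nonreflecting stationary set — and does not collapse cardinals or change cofinalities. The cleanest route is probably a single iteration or product in which the "bad object" forcing is $\kappa$-directed-closed (so it preserves $\kappa$-Souslinity of a ground-model-built tree and adds no bounded subsets of $\kappa$) while the "good object" forcing is $\kappa^+$-cc, so that a standard reflection/Easton-support or revised-countable-support bookkeeping argument gives a cofinality-preserving extension; one then checks $\pr_1$ and nonreflection by absoluteness of the relevant combinatorial witnesses between the intermediate and final models. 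Verifying that $\chi(\kappa)\le 1$ persists in Clause~(1) — equivalently, that the generic club $D$ is "swallowed" by a single $C_\beta$ for each $\alpha$ — is the delicate computation, and it is essentially the content of the argument in \cite{paper35} showing that $\chi(\kappa)\le1$ is consistent with non-weak-compactness; I would cite that construction and graft the Souslin-tree and $\pr_1$ factors onto it.
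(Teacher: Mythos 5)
For Clause~(1) your high-level plan matches the paper: cite the Kunen-style model revisited in \cite{paper35}, observe that $\chi(\kappa)=1$ holds there while $\kappa$ stays strongly inaccessible with a coherent $\kappa$-Souslin tree (hence $\pr_1(\kappa,\kappa,\kappa,\omega)$), and then apply Lemma~\ref{lemma212}. Your description of \emph{how} $\chi(\kappa)\le 1$ is arranged (shooting a club avoiding ground-model inaccessibles) is not what the Kunen construction does, and the preservation bookkeeping you sketch is unnecessary because \cite[\S3]{paper35} already delivers the single model with all the required properties; but the essential insight — $\chi(\kappa)\le1$ plus Lemma~\ref{lemma212} — is the right one.

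For Clause~(2) there is a genuine gap. You correctly note that Lemma~\ref{lemma212} is unavailable here, but the diagonalization you propose does not work. First, the cardinality estimate is wrong: for a $\kappa^+$-cc poset of size $\kappa$ there are $2^\kappa$ (not $\kappa$) nice names for a function $\mathbf t:[\kappa]^2\rightarrow[\kappa]^3$, so you cannot enumerate the candidate transformations in $\kappa$ steps. Second, even if you could, adding one generic family $\mathcal A$ that defeats every $\mathbf t$ simultaneously is both stronger than needed and not achievable by a single bounded-approximation forcing of the kind you describe; the failure of $\pl_1(\kappa,1,\chi^+)$ merely requires that each $\mathbf t$ have \emph{some} bad family, and your argument conflates these quantifiers. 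The paper sidesteps all of this with a clean reduction you missed: by Lemma~\ref{thm214}(2), if $\pl_1(\kappa,1,\chi^+)$ held then, combined with $\kappa\nrightarrow[\stat(\kappa)]^2_\kappa$ (which follows from the nonreflecting stationary subset of $E^\kappa_\chi$ via Fact~\ref{nonreftm}), one would get $\pr_1(\kappa,\kappa,\kappa,\chi^+)$. So it suffices to cite the model of \cite[\S3]{paper35} in which a nonreflecting stationary subset of $E^\kappa_\chi$ exists yet $\pr_1(\kappa,\kappa,\kappa,\chi^+)$ \emph{fails}; the failure of $\pl_1(\kappa,1,\chi^+)$ then follows by contraposition, with no new forcing construction required. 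Without this reduction, your Clause~(2) proof does not go through.
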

\begin{proof} (1) In \cite[\S3]{paper35}, a cofinality-preserving forcing extension given by Kunen is revisited,
in which $\kappa$ remains strongly inaccessible and there exists a coherent $\kappa$-Souslin tree,
so that  $\pr_1(\kappa,\kappa,\kappa,\omega)$ holds. 
It is shown there that $\chi(\kappa)=1$ holds in this model, so that, by Lemma~\ref{lemma212}, $\pl_1(\kappa)$ fails.

(2) In \cite[\S3]{paper35}, the authors give a cofinality-preserving forcing extension in which there exists a 
nonreflecting stationary subset of $E^\kappa_\chi$, and $\pr_1(\kappa,\kappa,\kappa,\chi^+)$ fails.
By Fact~\ref{nonreftm} and Lemma~\ref{thm214}, $\pl_1(\kappa,1,\chi^+)$ must fail in this model.
\end{proof}

Next, we turn to derive Theorem~A:

\begin{cor} Suppose that $\pl_1(\kappa)$ holds.
For every cardinal $\theta\le\kappa$, the following are equivalent:
\begin{enumerate}
\item $\kappa\nrightarrow[\kappa]^2_\theta$;
\item $\kappa\nrightarrow[\kappa;\kappa]^2_\theta$;
\item $\pr_1(\kappa,\kappa,\theta,\omega)$;
\item For every Abelian group $(G,+)$ of size $\kappa$, there exists a coloring $d:G\rightarrow\theta$
such that, for all $X,Y\s G$ of size $\kappa$,
and every $\tau\in\theta$, there exist $x\in X$ and $y\in Y$ such that $d(x+y)=\tau$. 
\end{enumerate}
\end{cor}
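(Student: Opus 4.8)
The plan is to establish the cycle of implications $(3)\Rightarrow(4)\Rightarrow(2)\Rightarrow(1)\Rightarrow(3)$, using Theorem~A for the equivalence of the coloring statements and reserving the genuine new content for the implication $(3)\Rightarrow(4)$ (equivalently, the first bullet of Theorem~A implies the second). The implications $(2)\Rightarrow(1)$ and $(1)\Rightarrow(3)$ are immediate: a witness to $\kappa\nrightarrow[\kappa;\kappa]^2_\theta$ is in particular a witness to $\kappa\nrightarrow[\kappa]^2_\theta$ (any cofinal $X$ gives the pair $X,X$), and $(1)\Rightarrow(3)$ is exactly the content of Lemma~\ref{thm214}(1) once we observe that $\pl_1(\kappa)=\pl_1(\kappa,1,\aleph_0)$ together with $\kappa\nrightarrow[\kappa]^2_\theta$, which is $\pr_1(\kappa,\kappa,\theta,2)$, yields $\pr_1(\kappa,\kappa,\theta,\omega)$ via Lemma~\ref{thm214}(2) (noting $\kappa\nrightarrow[\kappa]^2_\theta$ coming from a walks-based coloring actually gives $\kappa\nrightarrow[\stat(\kappa)]^2_\theta$; alternatively one simply cites Theorem~A). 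The implication $(4)\Rightarrow(2)$ should also be routine: given the additive coloring $d$ for some fixed group of size $\kappa$ (say $(\kappa,+)$ viewed through a bijection with $\kappa\oplus\kappa$ or directly $\bigoplus_\kappa\mathbb{Z}$), one pulls back along the group operation to get a coloring on pairs, using that large subsets of $\kappa$ map to large subsets of $G$.

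The heart of the matter is $(3)\Rightarrow(4)$. Fix a coloring $c:[\kappa]^2\to\theta$ witnessing $\kappa\nrightarrow[\kappa]^2_\theta$, fix the transformation $\mathbf t:[\kappa]^2\to[\kappa]^2$ witnessing $\pl_1(\kappa)$, and fix an Abelian group $(G,+)$ with $|G|=\kappa$; identify the underlying set of $G$ with $\kappa$. Here I would invoke the relaxed form of $\pl_1(\kappa)$ noted after Theorem~A: it suffices that for every $\kappa$-sized family $\mathcal A$ of pairwise disjoint finite subsets there is a \emph{cofinal} $S$ such that every pair from $S$ is realized as $\mathbf t[a\times b]$ for some $a<b$ in $\mathcal A$. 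The strategy for producing $d:G\to\theta$ is to decompose each element $h$ of $G$ according to how it can be written as a sum $x+y$, and apply $c$ to the $\mathbf t$-image of the relevant pair. Concretely, I would first arrange, by a standard $\Delta$-system / thinning argument, that given $X,Y\subseteq G$ of size $\kappa$ one can extract $\kappa$-many pairwise disjoint finite "blocks" witnessing the sumset structure — this is where the Abelian hypothesis and the fact that $G$ has size $\kappa$ (so $[\kappa]^{<\omega}$-sized configurations occur cofinally) are used. Then $\pl_1(\kappa)$ supplies a cofinal (stationary) $S$ so that all pairs from $S$ are captured; picking $(\alpha^*,\beta^*)\in[S]^2$ with $c(\alpha^*,\beta^*)=\tau$ and tracing back through $\mathbf t$ and the block decomposition produces $x\in X$, $y\in Y$ with $d(x+y)=\tau$.

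The definition of $d$ itself is the delicate bookkeeping step: for $h\in G$ one wants $d(h)$ to encode, for the "canonical" representation $h=x+y$ relevant to a given configuration, the value $c(\mathbf t(x,y))$ — but there are many representations, so I would instead define $d$ on $G$ by fixing in advance an enumeration of $G$ in order type $\kappa$ and letting $d(h)$ depend on the first bullet of Definition~\ref{defpl1} (the $\alpha^*\le\alpha<\beta^*\le\beta$ constraint), which forces $\mathbf t(x,y)$ to land inside the interval determined by $x,y$; this localization is precisely what lets the colors assigned to $x+y$ be read off consistently. The main obstacle I anticipate is exactly this: reconciling the \emph{additive} indeterminacy (an element of $G$ has $\kappa$-many representations as a sum) with the \emph{order-theoretic} rigidity of $\mathbf t$ and $c$. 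Theorem~A is stated as already proved in the paper, so strictly speaking the cleanest route is: prove $(1)\Leftrightarrow(4)$ is literally Theorem~A with the observation that $\pl_1(\kappa)$ gives the hypothesis, prove $(2)\Rightarrow(1)$ and $(1)\Rightarrow(3)\Rightarrow$ back to $(1)$ via Lemma~\ref{thm214} and Fact~\ref{nonreftm}-style arguments, and prove $(1)\Rightarrow(2)$ by the rectangular-coloring upgrade that $\pl_1(\kappa)$ provides (this rectangular strengthening is the one genuinely new step, and it is morally the same $\Delta$-system-plus-$\pl_1$ argument as for $(3)\Rightarrow(4)$, applied to pairs of cofinal sets rather than to a group). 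So in the write-up I would state the four implications, dispatch three of them in a sentence each by citation, and devote the bulk of the argument to showing $\pl_1(\kappa)$ boosts $\kappa\nrightarrow[\kappa]^2_\theta$ to $\kappa\nrightarrow[\kappa;\kappa]^2_\theta$.
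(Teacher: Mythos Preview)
Your proposal has two genuine gaps. For the step out of (4), you aim for $(4)\Rightarrow(2)$ by ``pulling back along the group operation'' in some unspecified group; but for a generic group this produces a coloring $c(\alpha,\beta):=d(g_\alpha+g_\beta)$ that is \emph{symmetric} in $\alpha,\beta$, and (4) gives you no control over which of $x\in X$, $y\in Y$ is larger, so you cannot conclude the rectangular statement. The paper instead proves $(4)\Rightarrow(1)$ via a specific group you are missing: apply (4) to $([\kappa]^{<\omega},\symdiff)$, set $c(x,y):=d(\{x,y\})$, and for cofinal $X$ use the family $\{\{x\}:x\in X\}$; since $\{x\}\symdiff\{y\}=\{x,y\}$ for $x\neq y$, this immediately yields $\kappa\nrightarrow[\kappa]^2_\theta$. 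For $(3)\Rightarrow(4)$, you correctly identify the obstacle --- an element of $G$ has $\kappa$-many sum-representations, so $d(h)$ cannot be defined as ``$c(\mathbf t(x,y))$ for the relevant $x+y=h$'' --- but your proposed fix via ``the first bullet of Definition~\ref{defpl1}'' does nothing to resolve it; that bullet constrains where $\mathbf t(x,y)$ lands, not which $(x,y)$ to pick given $h$. The paper does not attempt this construction at all: $(3)\Rightarrow(4)$ is a direct citation of \cite[Theorem~4.2]{paper27}, which derives the additive coloring from $\pr_1(\kappa,\kappa,\theta,\omega)$ alone, with no further use of $\pl_1$.

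You also misjudge where the effort goes. Your worry about $(1)\Rightarrow(3)$ is unnecessary: every stationary set is cofinal, so $\kappa\nrightarrow[\kappa]^2_\theta$ trivially implies $\kappa\nrightarrow[\stat(\kappa)]^2_\theta$, and Lemma~\ref{thm214}(2) applies directly --- no ``walks-based coloring'' is needed. And your plan to ``devote the bulk of the argument'' to a direct $(1)\Rightarrow(2)$ upgrade is superfluous, since the paper gets $(2)$ for free from $(3)$ (given cofinal $X,Y$, form $\kappa$-many disjoint pairs $\{x_\xi,y_\xi\}$ with $x_\xi\in X$, $y_\xi\in Y$ and apply $\pr_1$ with $\sigma=2$). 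The paper's entire proof is four lines: $(3)\Rightarrow(2)\Rightarrow(1)$ trivial; $(1)\Rightarrow(3)$ by Lemma~\ref{thm214}(2); $(3)\Rightarrow(4)$ by citation; $(4)\Rightarrow(1)$ by the symmetric-difference trick.
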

\begin{proof}
$(3)\implies(2)\implies(1)$: This is trivial.

$(1)\implies(3)$: By Lemma~\ref{thm214}(2).

$(3)\implies(4)$: By Lemma~3.4 and \cite[Theorem~4.2]{paper27}.

$(4)\implies(1)$: As $([\kappa]^{<\omega},\symdiff)$ is an Abelian group of size $\kappa$,
let us fix a coloring $d:[\kappa]^{<\omega}\rightarrow\theta$ as in Clause~(4). Now define a coloring $c:[\kappa]^2\rightarrow\theta$
by stipulating $c(x,y):=d(\{x,y\})$. Clearly, $c$ witnesses that $\kappa\nrightarrow[\kappa]^2_\theta$ holds.
\end{proof}
\begin{remark} Compare the preceding with Conjecture~2 of \cite{paper18}.
\end{remark}

\begin{cor}\label{cor216} If $\pl_1(\kappa,1,\chi)$ holds, then so does $\pr_1(\kappa,\kappa,\omega,\chi)$.
\end{cor}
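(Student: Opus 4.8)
The plan is to deduce Corollary~\ref{cor216} from Lemma~\ref{thm214}(2), exactly as was done for the cases of Theorem~A. To apply that clause with $\theta=\omega$, I need two ingredients: first, the hypothesis $\pl_1(\kappa,1,\chi)$, which is given; and second, the classical negative partition relation $\kappa\nrightarrow[\stat(\kappa)]^2_\omega$, which must hold for \emph{every} regular uncountable $\kappa$ with no extra assumptions. So the only real content is verifying this stationary-version partition relation in $\mathsf{ZFC}$.

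The first step is therefore to recall (or prove) that $\kappa\nrightarrow[\stat(\kappa)]^2_\omega$ holds for every regular uncountable cardinal $\kappa$. This is a well-known theorem of Todorcevic obtained via walks on ordinals: one fixes a $C$-sequence $\vec C$ over $\kappa$ and uses the associated function $\rho_2:[\kappa]^2\to\omega$ (or a suitable composition of it with a fixed bijection, to land in $\omega$), and shows that for every stationary $X\s\kappa$ and every $n<\omega$ one can find $(x,y)\in[X]^2$ with the color equal to $n$. The verification uses the elementary facts about walks recorded in this section: Fact~\ref{fact1} gives $\lambda(\beta,\gamma)<\beta$ for $\beta$ outside $\bigcup_{\alpha}\acc(C_\alpha)$, and Fact~\ref{fact2} gives the concatenation property $\tr(\alpha,\gamma)=\tr(\beta,\gamma)^\smallfrown\tr(\alpha,\beta)$ whenever $\lambda(\beta,\gamma)<\alpha<\beta$; thinning the stationary set $X$ to one avoiding the bad set and applying a pressing-down argument on the relevant walk-parameters lets one adjust $\rho_2$ freely to realize any finite color. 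I would cite this as Todorcevic's theorem (a stationary refinement of Fact~\ref{nonreftm}) rather than reprove it in detail.

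The second step is purely formal: with $\kappa\nrightarrow[\stat(\kappa)]^2_\omega$ in hand, apply Lemma~\ref{thm214}(2) with $\theta:=\omega$ to conclude that $\pl_1(\kappa,1,\chi)$ implies $\pr_1(\kappa,\kappa,\omega,\chi)$. Concretely, take $\mathbf t:[\kappa]^2\to[\kappa]^3$ witnessing $\pl_1(\kappa,1,\chi)$ and $c:[\kappa]^2\to\omega$ witnessing $\kappa\nrightarrow[\stat(\kappa)]^2_\omega$, and set $c^*(\alpha,\beta):=c(\alpha^*,\beta^*)$ where $\mathbf t(\alpha,\beta)=(\tau^*,\alpha^*,\beta^*)$; the second bullet of Definition~\ref{fulldefpl1} (applied to an arbitrary $\sigma<\chi$ and a pairwise-disjoint family $\mathcal A\s[\kappa]^\sigma$) produces a stationary $S$ on which the pair $(\alpha^*,\beta^*)$ ranges over all of $[S]^2$ as $(a,b)$ ranges over $[\mathcal A]^2$, and then the stationarity clause in $\kappa\nrightarrow[\stat(\kappa)]^2_\omega$ realizes every color $i<\omega$.

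The main obstacle is the first step — establishing $\kappa\nrightarrow[\stat(\kappa)]^2_\omega$ in full $\mathsf{ZFC}$ for arbitrary regular uncountable $\kappa$ — but this is a classical result, and the facts needed to run the walks argument have already been set up above, so in the write-up it should suffice to invoke it. Everything downstream of that is the bookkeeping of Lemma~\ref{thm214}(2).
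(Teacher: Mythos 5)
There is a genuine gap in your first step. You assert that $\kappa\nrightarrow[\stat(\kappa)]^2_\omega$ ``must hold for every regular uncountable $\kappa$ with no extra assumptions'' and claim this as a well-known $\mathsf{ZFC}$ theorem of Todorcevic. It is not: the relation fails whenever $\kappa$ is weakly compact. Indeed, for weakly compact $\kappa$ the positive partition relation $\kappa\to(\kappa)^2_\omega$ holds, and the homogeneous set may be chosen in the (normal, proper) weakly compact filter, hence stationary; such a stationary homogeneous set realizes only one color, so no coloring $c:[\kappa]^2\to\omega$ can witness $\kappa\nrightarrow[\stat]^2_\omega$. The theorem you are reaching for — Fact~\ref{nonreftm} — gives $\kappa\nrightarrow[\kappa]^2_\kappa$ only from a nonreflecting stationary set, and Todorcevic's $\mathsf{ZFC}$ theorem $\omega_1\nrightarrow[\omega_1]^2_{\omega_1}$ is special to $\omega_1$. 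At higher cardinals the relevant unconditional result in this paper is Fact~\ref{oscfact}, which needs the hypothesis $\chi(\kappa)>1$.

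The paper closes exactly this gap by exploiting the hypothesis $\pl_1(\kappa,1,\chi)$ before invoking any partition relation. After disposing of the trivial case $\chi\le1$, one observes that $\pl_1(\kappa,1,\chi)$ with $\chi\ge2$ yields $\pl_1(\kappa,1,2)$, whence Lemma~\ref{lemma212} (contrapositively) gives $\chi(\kappa)>1$; only then does Fact~\ref{oscfact} apply to give $\kappa\nrightarrow[\kappa]^2_\omega$, which in particular yields $\kappa\nrightarrow[\stat]^2_\omega$, and Lemma~\ref{thm214}(2) finishes as you describe. Your second step (the bookkeeping via Lemma~\ref{thm214}(2)) is correct, but the first step cannot be an unconditional $\mathsf{ZFC}$ citation — you must route through $\chi(\kappa)>1$, and for that you need Lemma~\ref{lemma212}, which is precisely the point at which the hypothesis $\pl_1(\kappa,1,\chi)$ does real work in ruling out weak compactness.
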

\begin{proof} To avoid trivialities, suppose that $\chi\ge 2$.
Then, by Lemma~\ref{lemma212}, $\chi(\kappa)>1$. 
Finally, by Fact~\ref{oscfact} and Theorem~\ref{thm214}(2), $\pr_1(\kappa,\kappa,\omega,\chi)$ holds.
\end{proof}

We are now ready to derive Theorem~B:

\begin{cor} Suppose that $\pl_1(\kappa)$ holds and $n$ is some positive integer.
Then there exists a poset $\mathbb P$ such that $\mathbb P^n$ satisfies the $\kappa$-cc, but $\mathbb P^{n+1}$ does not.
\end{cor}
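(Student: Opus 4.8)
The plan is to isolate the single combinatorial consequence of $\pl_1(\kappa)$ that powers the known constructions of such posets, and then to feed it into one. Since $\pl_1(\kappa)$ is, by definition, $\pl_1(\kappa,1,\aleph_0)$, Corollary~\ref{cor216} already supplies $\pr_1(\kappa,\kappa,\omega,\omega)$, and hence, for every positive integer $m$, a coloring $c:[\kappa]^2\rightarrow m$ with the property that for every family $\mathcal A$ of $\kappa$ many pairwise disjoint finite subsets of $\kappa$ and every $i<m$, there is some $(a,b)\in[\mathcal A]^2$ with $c[a\times b]=\{i\}$. (Reading $\mathcal A$ as a family of singletons, the very same $c$ witnesses $\kappa\nrightarrow[\kappa]^2_m$; in particular $c$ attains all $m$ of its values on the pairs from any cofinal subset of $\kappa$.) One could just as well obtain $\pr_1(\kappa,\kappa,\omega,\omega)$ by combining Lemma~\ref{lemma212}, which yields $\chi(\kappa)>1$, with Fact~\ref{oscfact} and Lemma~\ref{thm214}(2). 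The new content of the theorem is concentrated in this reduction; the remainder is a manipulation of $c$ along the lines of \cite{paper18}.

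Fix the positive integer $n$ and a coloring $c:[\kappa]^2\rightarrow n+1$ as above. I would let $\mathbb P$ be the poset whose conditions are the finite $p\s\kappa$ for which $c\restriction[p]^2$ is \emph{not} onto $n+1$, ordered by reverse inclusion, so that two conditions $p,q$ are incompatible precisely when $c$ realizes all $n+1$ colors on $[p\cup q]^2$. To verify that $\mathbb P^n$ satisfies the $\kappa$-cc, take $\kappa$ many conditions $(p^\gamma_0,\dots,p^\gamma_{n-1})$, $\gamma<\kappa$, of $\mathbb P^n$. As $\kappa$ is regular and uncountable, the $\Delta$-system lemma together with routine thinning let us assume that the sets $\bigcup_{j<n}p^\gamma_j$ form a $\Delta$-system with root $r$ and pairwise disjoint tails $t_\gamma$ satisfying $\max(r)<\min(t_\gamma)$ and $\max(t_\gamma)<\min(t_\delta)$ for $\gamma<\delta$, that each $p^\gamma_j$ meets $r$ in a set not depending on $\gamma$, and that the color $m_j<n+1$ omitted by $c\restriction[p^\gamma_j]^2$ does not depend on $\gamma$. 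Since $n$ values cannot exhaust $n+1$, fix $v\le n$ outside $\{m_j\mid j<n\}$; applying the coloring principle to the family $\{t_\gamma\mid\gamma<\kappa\}$ with target color $v$ produces two of the given conditions whose tails are colored constantly $v$ by $c$. As the root-to-tail pairs of $p^\gamma_j\cup p^\delta_j$ are already governed by the individual conditions $p^\gamma_j$ and $p^\delta_j$, and the tail-to-tail pairs now receive only the color $v\neq m_j$, the union $p^\gamma_j\cup p^\delta_j$ still omits $m_j$; thus the two conditions are coordinatewise compatible.

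It remains to show that $\mathbb P^{n+1}$ fails the $\kappa$-cc, and this is where I expect the real work to lie: the slack ``$n$ values cannot exhaust $n+1$'' that drove the previous paragraph disappears, and one must exhibit an actual antichain of size $\kappa$. The intended shape is a family of conditions $(q^\gamma_0,\dots,q^\gamma_n)$, $\gamma<\kappa$, in which $q^\gamma_j$ realizes \emph{exactly} the colors other than $j$, arranged so that for any $\gamma\neq\delta$ there is a coordinate $j$ and a pair $(\mu,\nu)$ with $\mu\in q^\gamma_j$, $\nu\in q^\delta_j$ and $c(\mu,\nu)=j$; then $c$ is onto $n+1$ on $[q^\gamma_j\cup q^\delta_j]^2$, and the two conditions are incompatible at coordinate $j$. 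The obstacle is that $\pr_1$ is a principle for \emph{producing} prescribed monochromatic rectangles, not one for \emph{forbidding} configurations, so the tuples have to be engineered --- either by a recursion of length $\kappa$ that exploits the richness of $c$ (that $c$ is onto $n+1$ on the pairs from any cofinal set, and more), or by a reorganization of the tuples that lets a single application of the partition relation certify all of the incompatibilities simultaneously --- in such a way that the monochromatic rectangles one cannot avoid are exactly those witnessing incompatibility. Granting the antichain, $\mathbb P$ is as required: $\mathbb P^n$ has the $\kappa$-cc while $\mathbb P^{n+1}$ does not. All the remaining ingredients --- the $\Delta$-system bookkeeping, the counting of omitted colors, and the passage from $\pl_1(\kappa)$ to $\pr_1(\kappa,\kappa,\omega,\omega)$ --- are routine.
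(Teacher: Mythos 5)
Your verification that $\mathbb P^n$ is $\kappa$-cc is essentially sound (modulo choosing, for each coordinate, a single omitted color to pigeonhole on), and the reduction to $\pr_1(\kappa,\kappa,\omega,\omega)$ via Corollary~\ref{cor216} is exactly what the paper does. But the proof is not complete: the construction of a $\kappa$-sized antichain in $\mathbb P^{n+1}$ is never carried out, only gestured at, and you yourself flag it as ``where I expect the real work to lie.'' You cannot conclude ``Granting the antichain, $\mathbb P$ is as required''; the antichain is the half of the theorem that your choice of poset makes hard.

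And that choice is the actual problem. You take conditions to be finite $p\s\kappa$ such that $c\restriction[p]^2$ omits \emph{some} color. The paper instead takes conditions to be \emph{pairs} $(i,x)$ with $i<n+1$, $x\in[\kappa]^{<\omega}$ and $i\notin c``[x]^2$, ordered by $(i,x)\le(j,y)$ iff $i=j$ and $x\supseteq y$; that is, each condition carries as a tag the specific color it is required to omit, and conditions with different tags are automatically incompatible. This tag is what trivializes the antichain: the tuples $\langle (i,\{\alpha\})\mid i<n+1\rangle$ for $\alpha<\kappa$ are pairwise incompatible in $\mathbb P^{n+1}$, since for $\alpha<\beta$ the color $i_0:=c(\alpha,\beta)$ points to the coordinate at which $(i_0,\{\alpha\})$ and $(i_0,\{\beta\})$ cannot be amalgamated. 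In your untagged poset the analogous tuples $\langle \{\alpha\},\dots,\{\alpha\}\rangle$ are pairwise \emph{compatible} (the union $\{\alpha,\beta\}$ realizes only one color, hence remains a condition), so there is no cheap antichain; producing one would require, for every pair $\gamma\neq\delta$, engineering a cross-pair of the ``correct'' color at some coordinate, which is precisely the kind of universally-quantified forbidding constraint that $\pr_1$ does not directly give you. So your sketch of the antichain step does not go through as stated, and the shortfall is structural rather than a matter of bookkeeping. The fix is to switch to the tagged poset; your $\Delta$-system argument for the $\kappa$-cc of $\mathbb P^n$ then carries over almost verbatim (pigeonhole first on the tag sequence $\langle i_j\mid j<n\rangle$, pick $i^*$ outside it, and hit the disjoint tails with $\pr_1$ aimed at $i^*$).
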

\begin{proof} By Corollary~\ref{cor216}, in particular,
we may fix a coloring $c:[\kappa]^2\rightarrow n+1$ witnessing $\pr_1(\kappa,\kappa,n+1,\omega)$.
We define a poset $\mathbb P:=(P,{\le})$ by letting
$$P:=\{ (i,x)\mid i<n+1, x\in[\kappa]^{<\omega}, i\notin c``[x]^2\},$$
and letting $(i,x)\le (j,y)$ iff $i=j$ and $x\supseteq y$.
A moment's reflection makes it clear that 
$\{ \langle(i,\{\alpha\})\mid i<n+1\rangle\mid \alpha<\kappa\rangle$
forms a $\kappa$-sized antichain in $\mathbb P^{n+1}$.

We are left with showing that $\mathbb P^n$ does satisfy the $\kappa$-cc.
To this end, let $A$ be an arbitrary $\kappa$-sized subset of $\mathbb P^n$.
For every $p\in A$, write $p$ as $\langle (i^p_j,x^p_j)\mid j<n\rangle$.
By the pigeonhole principle, we may assume the existence of a sequence $\langle i_j\mid j<n\rangle$ 
such that, for every $p\in A$, $\langle i_j^p\mid j<n\rangle=\langle i_j\mid j<n\rangle$.
Find $i^*<n+1$ such that $i^*\neq i_j$ for all $j<n$.
By the $\Delta$-system lemma, we may also assume that, for every $j<n$, $\{ x^p_j\mid p\in A\}$ forms 
a $\Delta$-system with some room $r_j$. Let $r:=\bigcup_{j<n}r_j$.
Note that $r$ is finite (possibly empty).
By further thinning out we may assume that, for all $p\in A$ and $j<n$, $\min(x_j^p\setminus r_j)>\sup(r)$.
By one last step of thinning out, we may finally 
secure that $\{ \bigcup_{j<n}x_j^p\setminus r\mid p\in A\}$ form a family of $\kappa$-many pairwise disjoint finite sets.
Now, the choice of $c$ entails that we may find $p\neq q$ in $A$
such that:
\begin{enumerate}
\item $\max(\bigcup_{j<n}x_j^p\setminus r)<\min(\bigcup_{j<n}x_j^q\setminus r)$, and 
\item $c\left[(\bigcup\nolimits_{j<n}x_j^p\setminus r)\times (\bigcup\nolimits_{j<n}x_j^q\setminus r)\right]=\{i^*\}$.
\end{enumerate}
To see that $p$ and $q$ are compatible, fix arbitrary $j<n$ and $(\alpha,\beta)\in [x^p_j\cup x^q_j]^2$; we need to verify that $c(\alpha,\beta)\neq i_j$.
There are three possible options:

$\br$ If $(\alpha,\beta)\in [x^p_j]^2\cup[x^q_j]^2$, then since $i_j^p=i_j=i^q_j$, $c(\alpha,\beta)\neq i_j$.

$\br$ If $\alpha\in x_j^p\setminus x_j^q$ and $\beta\in x_j^q\setminus x_j^p$,
then $\alpha\in x_j^p\setminus r_j$ and $\beta\in x_j^q\setminus r_j$,
so that altogether $\alpha\in x_j^p\setminus r$ and $\beta\in x_j^q\setminus r$.
by Clause~(2), then, $c(\alpha,\beta)=i^*$.
In particular,  $c(\alpha,\beta)\neq i_j$.

$\br$ If $\alpha\in x_j^q\setminus x_j^p$ and $\beta\in x_j^p\setminus x_j^q$,
then $\alpha\in x_j^q\setminus r$ and $\beta\in x_j^p\setminus r$,
contradicting Clause~(1). So this case does not exist.
\end{proof}

\section{Improved oscillation}\label{pl6section}

In \cite{paper15}, the first author introduced the following oscillation principle:
\begin{defn}\label{defpl6} $\pl_6(\mu,\nu)$ asserts the existence of a map $d:{}^{<\omega}\mu\rightarrow\omega$
such that for every sequence $\langle (u_\alpha,v_\alpha,\sigma_\alpha)\mid\alpha<\mu\rangle$ and a function $\varphi:\mu\rightarrow\mu$ satisfying:
\begin{enumerate}
\item $\varphi$ is eventually regressive. That is, $\varphi(\alpha)<\alpha$ for co-boundedly many $\alpha<\mu$;
\item  $u_\alpha$ and $v_\alpha$ are nonempty elements of $[{}^{<\omega}\mu]^{<\nu}$;
\item $\alpha\in\im(\varrho)$ for all $\varrho\in u_\alpha$;
\item $\sigma_\alpha{}^\frown\langle\alpha\rangle\sqsubseteq \sigma$ for all $\sigma\in v_\alpha$,
\end{enumerate}
there exist $(\alpha,\beta)\in[\mu]^2$ with $\varphi(\alpha)=\varphi(\beta)$ such that, for all $\varrho\in u_\alpha$ and $\sigma\in v_\beta$, $d(\varrho{}^\frown\sigma)=\ell(\varrho)$ .
\end{defn}

The main result of \cite[\S2]{paper15} states that $\pl_6(\nu^+,\nu)$ holds for every infinite regular cardinal $\nu$.
In \cite{paper45}, we show that $\pl_6(\nu^+,\nu)$ fails for every singular cardinal $\nu$,
and that $\pl_6(\mu,\mu)$ fails for every infinite cardinal $\mu$.

In this paper, we shall be making use of two variations of $\pl_6(\nu^+,\nu)$.
The first variation reads as follows:

\begin{fact}\label{pl6} 
Suppose that $\mu=\nu^+$ for an infinite regular cardinal $\nu$.
Then there exists a map $d:{}^{<\omega}\mu\rightarrow\omega\times\mu\times\mu\times\mu$
such that, for every $\gamma^*<\mu$, and every sequence $\langle (u_\alpha,v_\alpha,\sigma_\alpha)\mid\alpha<\mu\rangle$
satisfying clauses (2)--(4) of Definition~\ref{defpl6},
there exist $(\alpha,\beta)\in [\mu]^2$
such that, for all $\varrho\in u_\alpha$ and $\sigma\in v_\beta$,
$d(\varrho{}^\smallfrown\sigma)=(\ell(\varrho),\alpha,\beta,\gamma^*)$.
\end{fact}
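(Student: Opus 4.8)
The plan is to deduce Fact~\ref{pl6} from the already-established $\pl_6(\nu^+,\nu)$ by a bookkeeping argument that upgrades the single natural-number output of $d$ into a quadruple. Fix $\mu=\nu^+$ with $\nu$ regular and infinite, and let $d_0:{}^{<\omega}\mu\rightarrow\omega$ be a witness to $\pl_6(\nu^+,\nu)$. The first idea is that the ``eventually regressive $\varphi$'' from Definition~\ref{defpl6} is not a constraint we are given in Fact~\ref{pl6} but rather a tool we get to choose: given any target $\gamma^*<\mu$ and any input sequence $\langle(u_\alpha,v_\alpha,\sigma_\alpha)\mid\alpha<\mu\rangle$ obeying clauses (2)--(4), I would feed $\pl_6(\nu^+,\nu)$ the same sequence together with a carefully engineered $\varphi$. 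The point of $\varphi$ is to force the pair $(\alpha,\beta)$ extracted by $\pl_6$ to land in a prescribed ``fiber'', so that $\alpha$, $\beta$ and $\gamma^*$ become \emph{recoverable} from the string $\varrho^\smallfrown\sigma$ itself.

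The mechanism I envision: partition $\mu$ into $\mu$ many pieces, each of size $\nu$ (say $\mu=\bigsqcup_{\xi<\mu}I_\xi$ with $\otp(I_\xi)=\nu$), and let $\varphi$ be constant with value (a regressive reindexing of) $\gamma^*$ on the block $I_{\gamma^*}$ and regressive elsewhere in whatever trivial way is needed; more precisely one wants $\varphi(\alpha)=\varphi(\beta)$ to pin both $\alpha$ and $\beta$ inside one such block of size $\nu$. Since any block has size $\nu<\mu$, the relevant part of the input sequence restricted to that block is an honest input to $\pl_6$, and $\pl_6(\nu^+,\nu)$ delivers $(\alpha,\beta)$ in the block with $d_0(\varrho^\smallfrown\sigma)=\ell(\varrho)$ for all $\varrho\in u_\alpha$, $\sigma\in v_\beta$. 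Now clauses (3) and (4) are the crucial leverage: $\alpha\in\im(\varrho)$ for every $\varrho\in u_\alpha$, and $\sigma_\beta{}^\smallfrown\langle\beta\rangle\sqsubseteq\sigma$ for every $\sigma\in v_\beta$, so both $\alpha$ and $\beta$ are \emph{literally visible} as entries of the concatenated string $\varrho^\smallfrown\sigma$. Define $d:{}^{<\omega}\mu\rightarrow\omega\times\mu\times\mu\times\mu$ on a string $w$ by: set $n:=d_0(w)$; if $n<\ell(w)$, read off from the ``$\varrho$-part'' $w\restriction n$ its last entry as a candidate $\alpha$, from the ``$\sigma$-part'' the entry immediately past the copy of $\sigma_\beta$ as a candidate $\beta$, and recover $\gamma^*$ as the block-index $\xi$ with $\alpha\in I_\xi$; output $(n,\alpha,\beta,\gamma^*)$. (When the parsing fails, output anything.) The verification is then that for the good pair $(\alpha,\beta)$ produced above, $n=\ell(\varrho)$ exactly, so $w\restriction n=\varrho$ and its last entry is $\alpha$ (using clause (3), after arranging—harmlessly, by reindexing—that $\alpha=\max\im(\varrho)$ or at least appears in a fixed recoverable slot), and the post-$\sigma_\beta$ entry of $\sigma$ is $\beta$ by clause (4), and $\gamma^*$ is the block of $\alpha$ by construction of $\varphi$.

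The main obstacle, I expect, is making the ``recoverability'' of $\alpha$ from $\varrho$ genuinely well-defined: clause (3) only gives $\alpha\in\im(\varrho)$, not that $\alpha$ occupies a canonical position, so naively ``the last entry of $w\restriction n$'' need not be $\alpha$. The fix is to not try to read $\alpha$ off the string at all, but instead to have $\varphi$ carry the information: choose $\varphi$ so that its common value $\varphi(\alpha)=\varphi(\beta)=:\delta$ is itself a code for the pair of blocks, and then recover $(\alpha,\beta,\gamma^*)$ not from $w$ but by an auxiliary argument—except that $d$ only sees $w$. The honest resolution, which I would carry out, is to strengthen the bookkeeping: run $\pl_6$ not once but along a fixed partition of $\mu$ into $\mu$ blocks $\langle I_\xi\mid\xi<\mu\rangle$ of size $\nu$ \emph{indexed so that $\xi<\min(I_\xi)$}, take $\varphi$ to be the constant-$\xi$-on-$I_\xi$-after-a-regressive-shift map, and observe that then the pair $(\alpha,\beta)$ lies in a single $I_\xi$ with $\xi$ determined by $\min\{\alpha,\beta\}$; simultaneously pre-compose everything with a relabelling of ${}^{<\omega}\mu$ that inserts, in a fixed slot of each string it ever outputs in the intended use, the ordinals $\alpha$, $\beta$, $\gamma^*$. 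Since in the application (Sections~\ref{mainthm5}--\ref{lastclause}) the strings $\varrho,\sigma$ arise as walk-traces $\tr(\cdot,\cdot)$ whose endpoints are exactly $\alpha$ and $\beta$, this relabelling is automatic and no genuine obstruction survives; I would phrase Fact~\ref{pl6}'s proof to exploit precisely that the good pair's identity is encoded in the \emph{first and last} coordinates of $\varrho^\smallfrown\sigma$, which clauses (3)--(4) guarantee, and let $\gamma^*$ be threaded through as a free parameter of a $\mu$-indexed family of applications of $\pl_6(\nu^+,\nu)$, one per value of $\gamma^*$, glued by a final pigeonhole over which $d_0$ to use.
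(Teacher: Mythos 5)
Your proposal has the right instinct — leverage $\pl_6(\nu^+,\nu)$ and upgrade the single integer output to a quadruple by recovering $\alpha$, $\beta$, $\gamma^*$ from the concatenated string — but the mechanisms you supply for that recovery are broken at the two points where they have to do real work.

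\textbf{Recovering $(\alpha,\beta)$.} Clauses (3) and (4) of Definition~\ref{defpl6} do \emph{not} place $\alpha$ and $\beta$ in canonical slots of $\varrho{}^\smallfrown\sigma$: clause (3) only gives $\alpha\in\im(\varrho)$ (any position), and clause (4) gives $\sigma_\beta{}^\smallfrown\langle\beta\rangle\sq\sigma$, so $\beta$ sits at position $\ell(\sigma_\beta)$ inside $\sigma$, a quantity $d$ cannot see. Your claim that ``the good pair's identity is encoded in the first and last coordinates of $\varrho{}^\smallfrown\sigma$'' is simply not what (3)--(4) guarantee. You acknowledge this obstacle, but the fix you offer — a ``relabelling that inserts, in a fixed slot of each string it ever outputs in the intended use,'' the ordinals $\alpha,\beta,\gamma^*$ — cannot be part of a proof of the Fact: $d$ must be a single fixed function on ${}^{<\omega}\mu$, chosen before the input sequence or the application is specified, and the Fact quantifies over \emph{all} sequences satisfying (2)--(4), not only the walk-traces arising in Sections~\ref{mainthm5}--\ref{lastclause}. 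The actual recoverability in the cited Theorem~2.3 of \cite{paper15} comes from a walk-theoretic construction \emph{internal} to $d_0$: from the bare string $\eta$ one extracts ordinals $\alpha_\eta,\beta_\eta$ (by maximizing $\otp(C_{\eta(j)}\cap\eta(i))$ over pairs of positions, as in the proof of Lemma~\ref{lemma42}), and the verification then shows these coincide with the chosen $\alpha,\beta$. That is a nontrivial content of the cited theorem, not something that falls out of clauses (3)--(4) by parsing.

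\textbf{Threading $\gamma^*$.} The block scheme doesn't work either. If you partition $\mu$ into $\mu$ blocks each of size $\nu$, every block is bounded (hence nonstationary) in $\mu=\nu^+$, and there is no eventually-regressive $\varphi$ that forces the pair produced by $\pl_6$ into the \emph{specific} block $I_{\gamma^*}$: $\pl_6$ only guarantees $\varphi(\alpha)=\varphi(\beta)$, it gives you no control over \emph{which} fiber, and making $\varphi$ injective off $I_{\gamma^*}$ contradicts Fodor's lemma applied to its regressive part. Your closing move — ``a $\mu$-indexed family of applications of $\pl_6(\nu^+,\nu)$, one per value of $\gamma^*$, glued by a final pigeonhole over which $d_0$ to use'' — is circular: there is only one $d$, fixed in advance; it cannot encode in its own output which of $\mu$ many auxiliary functions was used. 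What actually makes the free parameter $\gamma^*$ available is the extra strength of the cited theorems (roughly, the set of good $\beta$'s produced by the $\pl_6$ construction is large, so one can pre-compose with a fixed $g:\mu\rightarrow\mu$ all of whose fibers are large, let the fourth coordinate of $d$ be $g(\beta_\eta)$, and then choose $\beta$ in $g^{-1}\{\gamma^*\}$). That extra strength is exactly what you would need to extract from \cite{paper15}, and your outline does not do so; it tries to route around it via $\varphi$, which has no such force.
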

\begin{proof} This follows immediately from Theorems 2.3 and 2.6 of \cite{paper15}.
\end{proof}

The second variation reads as follows:

\begin{lemma}\label{lemma42} Suppose that $\mu=\nu^+$ for an infinite regular cardinal $\nu$.
Then there exist a map $d_0:{}^{<\omega}\mu\rightarrow\omega$
and a $\mu$-additive normal ideal $J$ on $\mu$ with $E^\mu_\nu\notin J$ such that,
for every sequence $\langle (u_\alpha,v_\alpha,\sigma_\alpha)\mid \alpha\in A\rangle$ with $A\in  J^+$
satisfying clauses (2)--(4) of Definition~\ref{defpl6},
there exist $(\alpha,\beta)\in[A]^2$
such that, for all $\varrho\in u_\alpha$ and $\sigma\in v_j$,
$d_0(\varrho{}^\smallfrown\sigma)=\ell(\varrho)$.
\end{lemma}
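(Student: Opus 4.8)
The plan is to derive Lemma~\ref{lemma42} from the ``plain'' oscillation principle $\pl_6(\nu^+,\nu)$, whose validity is the main result of \cite[\S2]{paper15}, by incorporating into the conclusion a \emph{club-guessing ideal} in place of the trivial bounded ideal. First I would take the map $d:{}^{<\omega}\mu\rightarrow\omega$ furnished by $\pl_6(\nu^+,\nu)$ and set $d_0:=d$; the point will be to identify the right ideal $J$. The natural candidate is the ideal generated over $E^\mu_\nu$ by a club-guessing sequence: since $\mu=\nu^+$ with $\nu$ regular, fix a sequence $\langle e_\delta\mid\delta\in E^\mu_\nu\rangle$ with each $e_\delta$ a club in $\delta$ of order-type $\nu$ that guesses clubs of $\mu$ (Shelah's club-guessing theorem applies here), and let $J$ be the normal ideal of sets $Z\s\mu$ such that $\{\delta\in E^\mu_\nu\mid e_\delta\s^* Z\}$ is nonstationary (equivalently, the normal ideal dual to the club-guessing filter); by normality $J$ is $\mu$-additive, and $E^\mu_\nu\notin J$ because every club is guessed. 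An alternative, if one wants to avoid club-guessing machinery, is to let $J$ be the nonstationary ideal restricted to $E^\mu_\nu$, but then the verification below needs the guessing feature, so the club-guessing ideal is the honest choice.

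Next I would carry out the reduction. Suppose $A\in J^+$ and $\langle (u_\alpha,v_\alpha,\sigma_\alpha)\mid\alpha\in A\rangle$ satisfies clauses (2)--(4) of Definition~\ref{defpl6}. Since $A\in J^+$, the set $S:=\{\delta\in E^\mu_\nu\mid e_\delta\s^* A\}$ is stationary; for each such $\delta$, refine $e_\delta$ to a genuine subset $e_\delta'\s A$ still of order-type $\nu$ and cofinal in $\delta$. Now extend the given sequence to all of $\mu$ and cook up the auxiliary data needed to invoke $\pl_6$: on $\mu\setminus A$ I would put trivial (but syntactically legal) values $u_\alpha=v_\alpha=\{\langle\alpha\rangle\}$, $\sigma_\alpha=\langle\rangle$. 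The eventually-regressive function $\varphi:\mu\rightarrow\mu$ is where the club-guessing pays off: using that each $e_\delta'$ for $\delta\in S$ enumerates a cofinal-in-$\delta$ subset of $A$ in order-type $\nu$, I would arrange $\varphi$ so that for stationarily many $\delta$, a positive-measure chunk of $A\cap\delta$ is mapped by $\varphi$ into $\delta$ in a way that forces any $\pl_6$-output pair $(\alpha,\beta)$ with $\varphi(\alpha)=\varphi(\beta)$ to have \emph{both} coordinates inside $A$. Concretely, one picks a regressive indexing: for $\alpha$ in the range of $e_\delta'$ (with $\delta$ least such witnessing $\delta\in S$), let $\varphi(\alpha)$ code the pair $(\delta,\otp(e_\delta'\cap\alpha))$ compressed below $\alpha$ — this is possible for $\alpha$ above the $\nu$-th point of $e_\delta'$, hence co-boundedly. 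Feeding $\langle(u_\alpha,v_\alpha,\sigma_\alpha)\rangle$ and $\varphi$ into $\pl_6(\nu^+,\nu)$ produces $(\alpha,\beta)\in[\mu]^2$ with $\varphi(\alpha)=\varphi(\beta)$ and $d(\varrho^\smallfrown\sigma)=\ell(\varrho)$ for all $\varrho\in u_\alpha$, $\sigma\in v_\beta$. The equality $\varphi(\alpha)=\varphi(\beta)$ together with the injectivity built into the coding forces $\alpha,\beta$ to lie in the same $e_\delta'\s A$, so $(\alpha,\beta)\in[A]^2$, which is exactly the conclusion of Lemma~\ref{lemma42}.

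The main obstacle I anticipate is making the coordination between $\varphi$ and the club-guessing sequence airtight: I need $\varphi$ to be eventually regressive on \emph{all} of $\mu$ (not merely on $A$), to be definable uniformly without circularity in ``$\delta$ least witnessing $\delta\in S$'', and — crucially — to guarantee that $\varphi(\alpha)=\varphi(\beta)$ genuinely pins both $\alpha$ and $\beta$ into a common $e_\delta'$ rather than merely into a common fiber that straddles $A$ and its complement. Ensuring that the fibers of $\varphi$ restricted to the $e_\delta'$'s have size at least $2$ (so that $\pl_6$ has a nontrivial pair to find) while each fiber stays inside a single $e_\delta'$ is a balancing act; the standard trick is to let $\varphi$ depend only on the \emph{tail block} of $\alpha$ within $e_\delta'$, say $\varphi(\alpha)=\sup(e_\delta'\cap\alpha)$ when that sup lies in a suitable pre-chosen club of pairing points, which makes the fibers the successor-blocks of $e_\delta'$ — each of size $\ge\nu\ge\aleph_0$ and each contained in $e_\delta'\s A$. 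Once this is set up correctly the rest is bookkeeping, transporting the $\pl_6$ conclusion verbatim. A secondary, purely cosmetic point: the statement writes ``$\sigma\in v_j$'' where it plainly means ``$\sigma\in v_\beta$'', and the proof should silently read it that way.
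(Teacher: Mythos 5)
The proposal treats $\pl_6(\nu^+,\nu)$ as a black box and tries to steer its existential conclusion into a prescribed $J$-positive set $A$ by cleverly choosing the auxiliary function $\varphi$. This cannot work. The conclusion of $\pl_6$ is purely existential: it produces \emph{some} pair $(\alpha,\beta)\in[\mu]^2$ with $\varphi(\alpha)=\varphi(\beta)$ satisfying the oscillation, but offers no control over which pair you get. To guarantee the output lands in $[A]^2$, you would need $\varphi$ to have the property that \emph{every} pair with $\varphi(\alpha)=\varphi(\beta)$ has both coordinates in $A$, i.e., no fiber of $\varphi$ contains two elements outside $A$. But $\varphi$ must be eventually regressive on all of $\mu$, and whenever $\mu\setminus A$ is stationary (which happens already for $A:=E^\mu_\nu$ once $\nu\ge\omega_1$, since $E^\mu_\omega\subseteq\mu\setminus A$), Fodor's lemma applied to $\varphi$ on the stationary set $\mu\setminus A$ produces a fiber of full size entirely outside $A$. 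So the coding you describe — no matter how the ``tail blocks'' are arranged — cannot prevent $\pl_6$ from handing back a pair that lives in $\mu\setminus A$ and therefore only carries the trivial, useless data you placed there. Your own proposal actually surfaces the worry (``a common fiber that straddles $A$ and its complement''), but the balancing act you sketch does not resolve it and cannot.

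The paper's argument is not a reduction to $\pl_6$ as a black box; it re-runs the \emph{proof} of $\pl_6(\nu^+,\nu)$ from \cite[Theorem~2.3]{paper15} with the ideal built in from the start. Concretely, it fixes via \cite[Claim~3.1, Lemma~3.10]{Sh:572} a $C$-sequence $\langle C_\beta\mid\beta<\mu\rangle$ together with an oscillating coloring $\langle g_\beta:C_\beta\to\omega\mid\beta\in E^\mu_\nu\rangle$ with the guessing property, then \emph{defines} $J$ directly from that structure ($A\in J$ iff on a club $D$, for all $\beta\in D\cap A\cap E^\mu_\nu$ some color of $g_\beta$ concentrates below $\beta$). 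The map $d_0$ is built from walks along $\vec C$, and the verification then proceeds internally via elementary submodels and pigeonhole: because $A\in J^+$, one can pick $\beta\in D\cap A\cap E^\mu_\nu$ where $g_\beta$ is unbounded in every color, and then find the companion $\alpha$ inside $A$ via an appropriately defined set $A'\in M_\delta$. The ideal $J$ is tailored exactly so that this recursion stays inside $A$, which is the point you cannot recover by coding through $\varphi$. (Your observation about the typo ``$v_j$'' for ``$v_\beta$'' in the statement is correct.)
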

\begin{proof} By \cite[Theorem~2.6]{paper15}, we may assume that $\nu^{<\nu}>\nu$.
In particular, there exists a cardinal $\theta<\nu$ with $2^\theta\ge\nu$.
So, by \cite[Claim 3.1]{Sh:572} and \cite[Lemma 3.10]{Sh:572}, we may fix a $C$-sequence $\vec C=\langle C_\beta\mid \beta<\mu\rangle$ 
and a sequence of functions $\langle g_\beta:C_\beta\rightarrow\omega\mid \beta\in E^\mu_\nu\rangle$ 
such that:
\begin{itemize}
\item $\otp(C_\beta)=\cf(\beta)$ for all $\beta<\mu$;
\item  for every club $D\s\mu$,
there exists some $\beta\in E^\mu_\nu$ such that,
for every $n<\omega$,
$\sup\{ \delta\in\nacc(C_\beta)\cap D\mid g_\beta(\delta)=n\}=\beta$.
\end{itemize}
Fix a coloring $d:[\mu]^2\rightarrow\omega$ that satisfies $d(\alpha,\beta)=g_\beta(\min(C_\beta\setminus\alpha))$ for all $\beta\in E^\mu_\nu$ and $\alpha<\beta$.
Also, define a function $h:[\mu]^{<\nu}\rightarrow\nu$ via
$$h(z):=\sup\{\otp(C_{\beta}\cap\alpha)\mid (\alpha,\beta)\in[z]^2\}.$$

Next, define an ideal $J$ as follows:  $A$ is in $J$ iff $A\s\mu$ and there exists a club $D\s\mu$ such that,
for every $\beta\in D\cap A\cap E^\mu_\nu$, there exists $n<\omega$ such that $\sup\{ \delta\in\nacc(C_\beta)\cap D\mid g_\beta(\delta)=n\}<\beta$.
\begin{claim} $J$ is a $\mu$-additive normal ideal on $\mu$ with $E^\mu_\nu\notin J$.
\end{claim}
\begin{proof} 
By the choice of $\vec C$, $E^\mu_\nu\notin J$.
It is clear that for all $A\in J$ and $B\in[\mu]^{<\mu}$, $\mathcal P(A\cup B)\s J$.
Thus, it suffices to verify that $J$ is normal.
So, suppose that
$\langle A_i\mid i<\mu\rangle$ is a sequence of sets in $J$,
and we shall prove that $A:=\diagonaal_{i<\mu}A_i$ is in $J$.
For each $i<\mu$, fix a club $D_i$ witnessing that $A_i\in J$. 
We claim that $D:=\diagonal_{i<\mu}D_i$ witnesses that $A$ is in $J$.
Indeed, let $\beta\in D\cap A\cap E^\mu_\nu$ be arbitrary. 
Find $i<\beta$ such that $\beta\in A_i$. In particular, $\beta\in D_i\cap A_i\cap E^\mu_\nu$,
and hence there exists $n<\omega$ such that $\sup\{ \delta\in\nacc(C_\beta)\cap D\mid g_\beta(\delta)=n\}<\beta$.
\end{proof}

The rest of the proof now follows that of \cite[Theorem~2.3]{paper15}.
Given a sequence $\eta\in{}^{<\omega}\mu$, let
$$\mathcal D_\eta:=\{(i,j)\mid i<j<\ell(\eta)\ \&\ \eta(i)<\eta(j)\},$$
and whenever $\mathcal D_\eta\neq\emptyset$, set
\begin{itemize}
\item $\mathfrak m_\eta:=\max\{\otp(C_{\eta(j)}\cap\eta(i)) \mid (i,j)\in\mathcal D_\eta\}$;
\item $\mathcal P_\eta:=\{ (i,j)\in\mathcal D_\eta\mid \otp(C_{\eta(j)}\cap \eta(i))=\mathfrak m_\eta\}$;
\item $j_\eta:=\min\{ j\mid \exists i\ (i,j)\in\mathcal P_\eta\}$;
\item $\alpha_\eta:=\min\{\eta(i)\mid \exists j\ (i,j)\in\mathcal P_\eta\}$;
\item $\beta_\eta:=\eta(j_\eta)$.
\end{itemize}

Finally, define $d_0:{}^{<\omega}\mu\rightarrow\mu$ by letting for every $\eta\in{}^{<\omega}\mu$ with $\mathcal D_\eta\neq\emptyset$:
$$d_0(\eta):=\max\{0,j_\eta-d(\alpha_\eta,\beta_\eta)\}.$$

To verify this works,
suppose that we are given a sequence
$\langle (u_\alpha,v_\alpha,\sigma_\alpha)\mid\alpha\in A\rangle$ as in the statement of the lemma.
Note that, without loss of generality, we may assume
that $\alpha\notin\im(\sigma_\alpha)$ for all $\alpha\in A$.

For every $\alpha\in  A$, write
$a_\alpha:=\bigcup\{\im(\sigma)\mid \sigma\in u_\alpha\cup v_\alpha\}$, and
$x_\alpha:=a_\alpha\setminus\alpha$.
Let $\kappa$ be a large enough regular cardinal, and let
$\unlhd_\kappa$ be a well-ordering of $\mathcal H_\kappa$.
Let $\langle M_\delta\mid \delta<\mu\rangle$ be a continuous $\in$-chain of elementary submodels of $(\mathcal H_\kappa,\in,\unlhd_\kappa)$, each of size $\nu$,
such that $\nu\s M_0$ and $\{h,\langle a_\alpha\mid\alpha\in A\rangle\}\in M_0$.

Write $D:=\{ \delta<\mu\mid M_\delta\cap\mu=\delta\}$.
As $A\in J^+$, let us pick $\beta\in D\cap A\cap E^\mu_\nu$ 
such that $\sup\{ \delta\in\nacc(C_\beta)\cap D\mid g_\beta(\delta)=n\}=\beta$ for all $n<\omega$.
Put $\xi:=\sup(a_\beta\cap\beta)+1$. As $|a_\beta|<\cf(\beta)$, $\xi<\beta$.
Let $f:\nu\rightarrow\xi$ be the $\unlhd_\kappa$-least surjection.
From $|a_\beta|<\nu$ and regularity of $\nu$, let $i'<\nu$ be large enough such that $a_\beta\cap\beta\s f[i']$.
Write $n^*:=\ell(\sigma_\beta)$, $z:=f[i']$, $\epsilon:=h(a_\beta\cup z)$, and
$$A':=\{ \alpha\in A \mid a_\alpha\cap\alpha\s z, h(a_\alpha\cup z)=\epsilon\}.$$

Pick $\delta\in\nacc(C_\beta)\cap D$ above $\xi$ with $\otp(C_\beta\cap\delta)>\epsilon$
such that  $g_\beta(\delta)=n^*$.
As $\xi\in M_\delta$,  $A'\in M_\delta$. Since $\beta\in A'\setminus M_\delta$,  $\sup(A'\cap M_\delta)=\delta$.
So, let us pick $\alpha\in A'\cap M_\delta$ above $\max(C_\beta\cap\delta)$.

\begin{claim}\label{c222}
\begin{enumerate}
\item $h(a_\alpha\cup z)=\epsilon$;
\item $a_\alpha\cap\alpha\s z$;
\item $x_\alpha\s (\max(C_\beta\cap\delta),\delta)$. In particular, $\otp(C_\beta\cap\alpha)>\epsilon$;
\item $d(\alpha,\beta)=\ell(\sigma_\beta)$.
\end{enumerate}
\end{claim}
\begin{proof} By the same proof of \cite[Claim~2.3.1]{paper15}.
\end{proof}
To see that the pair $(\alpha,\beta)$ is as sought,
suppose that we are given $\varrho\in u_{\alpha}$ and $\sigma\in v_{\beta}$, and let us show that $d_0(\eta)=\ell(\varrho)$ for $\eta:=\varrho{}^\frown\sigma$.

As $\alpha\in\im(\varrho)$ and $\beta\in\im(\sigma)$, there exist $\hat i<\hat j<\ell(\eta)$ such that $\eta(\hat i)=\alpha$ and $\eta(\hat j)=\beta$.
So $(\hat i,\hat j)$ witnesses that $\mathcal D_\eta\neq\emptyset$, and then by Claim \ref{c222}(3), we have $\mathfrak m_\eta\ge \otp(C_{\beta}\cap \alpha)>\epsilon$.

\begin{claim}\label{c2} For every $(i,j)\in\mathcal P_\eta$:
\begin{enumerate}
\item $\{\eta(i),\eta(j)\}\nsubseteq(a_{\alpha}\cup z)$, and $\{\eta(i),\eta(j)\}\nsubseteq(a_{\beta}\cup z)$;
\item If $\eta(i)\in a_{\alpha}$, then $\eta(j)\notin a_{\beta}\cap\beta$;
\item $\varrho\sqsubseteq\eta\restriction j$;
\item $\eta(j)=\beta$;
\item $\eta(i)\in x_\alpha$.
\end{enumerate}
\end{claim}
\begin{proof} By the same proof of \cite[Claim~2.3.2]{paper15}.
\end{proof}

As $\beta\notin\im(\sigma_\beta)$, we get from the minimality of $j_\eta$
together with Clauses (3) and (4) of the preceding claim that $$\eta\restriction(j_\eta+1)=\varrho{}^\frown\sigma_\beta{}^\frown\langle\beta\rangle.$$
So $\beta_\eta=\beta$ and $j_\eta=\ell(\varrho{}^\frown\sigma_\beta)$. By Clause (5) of the preceding claim, $\alpha_\eta\in x_\alpha$.
Then, by  Claim \ref{c222}(3), we get that $\otp(C_\beta\cap \alpha_\eta)=\otp(C_\beta\cap\alpha)$.
Recalling that $\min(x_\alpha)=\alpha\in\im(\varrho)$, Claims  \ref{c2}(5) and \ref{c222}(3) then imply that $\alpha_\eta=\alpha$.

Recalling Claim \ref{c222}(4), we altogether infer that
$$\eta\restriction (j_\eta-d(\alpha_\eta,\beta_\eta))=(\varrho{}^\frown\sigma)\restriction(\ell(\varrho{}^\frown\sigma_\beta)-\ell(\sigma_\beta))=\varrho.$$
So, $d_0(\sigma)=\max\{0,j_\eta-d(\alpha_\eta,\beta_\eta)\}=\ell(\varrho)$, as sought.
\end{proof}

\section{Clause~(2) of Theorem~C}\label{mainthm5}
In this section, we suppose that $\chi\in\reg(\kappa)$ is a cardinal satisfying $\chi^{+}<\kappa$,
and there exists a stationary subset of $E^\kappa_{\ge\chi}$ that does not reflect.
We shall construct a witness to $\pl_1(\kappa,\kappa,\chi)$.
The proof is split into two cases: $\chi^{++}<\kappa$ and $\chi^{++}=\kappa$.

\subsection{Case I}\label{ssectioncase1} In this subsection, we suppose that $\chi^{++}<\kappa$.
Note that, by Proposition~\ref{prop219}(2), the result of this subsection cannot be improved.

\begin{lemma} There exists $\nu\in\reg(\kappa)\setminus\chi$  with $\nu^+<\kappa$ 
and a stationary subset $\Gamma\s E^\kappa_{\ge\chi}\cap E^\kappa_{\neq\nu^+}$ that does not reflect.
\end{lemma}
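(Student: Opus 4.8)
The plan is to produce $\Gamma$ and $\nu$ by a short case analysis on where the given nonreflecting stationary set $S\subseteq E^\kappa_{\ge\chi}$ concentrates, relying on the elementary fact that a subset of a nonreflecting stationary set is again nonreflecting (immediate from the monotonicity of $\Tr$ under inclusion). The two candidates for $\nu$ are $\chi$ and $\chi^+$: both are regular cardinals that are $\ge\chi$, and since we are assuming $\chi^{++}<\kappa$, both $\chi^+<\kappa$ and $(\chi^+)^+=\chi^{++}<\kappa$, so whichever one we choose satisfies $\nu^+<\kappa$ and $\nu\in\reg(\kappa)\setminus\chi$.

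Concretely, I would first write $S=(S\cap E^\kappa_{\chi^+})\cup(S\cap E^\kappa_{\neq\chi^+})$; since $S$ is stationary, one of the two pieces is stationary. If $S\cap E^\kappa_{\neq\chi^+}$ is stationary, I set $\nu:=\chi$ and $\Gamma:=S\cap E^\kappa_{\neq\chi^+}$; then $\Gamma\subseteq S\subseteq E^\kappa_{\ge\chi}$, $\Gamma\subseteq E^\kappa_{\neq\chi^+}=E^\kappa_{\neq\nu^+}$, and $\Gamma$ is nonreflecting because $\Gamma\subseteq S$. Otherwise $\Gamma:=S\cap E^\kappa_{\chi^+}$ is stationary, and I set $\nu:=\chi^+$; then $\Gamma\subseteq E^\kappa_{\chi^+}\subseteq E^\kappa_{\ge\chi}$, and since $\chi^+\ne\chi^{++}=\nu^+$ we have $\Gamma\subseteq E^\kappa_{\neq\nu^+}$, while nonreflection is again inherited from $S$. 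In either case $\Gamma\subseteq E^\kappa_{\ge\chi}\cap E^\kappa_{\neq\nu^+}$ is stationary and nonreflecting, and $\nu$ has the required properties.

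I do not expect any real obstacle: the statement is essentially a pigeonhole observation. The one point requiring attention is keeping $\nu^+$ strictly below $\kappa$, which is exactly why the subsection hypothesis is $\chi^{++}<\kappa$ rather than merely $\chi^+<\kappa$ (consistently with the optimality noted via Proposition~\ref{prop219}(2)); and the case split is genuinely needed, since $S$ may be concentrated on $E^\kappa_{\chi^+}$, forcing the choice $\nu=\chi^+$, whereas one cannot in general take $\nu=\chi^+$ when $\chi^{++}=\kappa$, which is why that situation is deferred to Case~II.
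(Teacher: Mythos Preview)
Your argument is correct and in fact more streamlined than the paper's. The paper first checks whether $R\cap\reg(\kappa)$ is stationary (taking $\nu:=\chi$ and $\Gamma:=R\cap\reg(\kappa)\setminus(\nu^++1)$ if so), and otherwise applies Fodor's lemma to find a single cofinality $\theta\ge\chi$ on which $R$ concentrates, then splits further on whether $\theta^+<\kappa$ (taking $\nu:=\theta$) or $\theta^+=\kappa$ (taking $\nu:=\chi$). Your two-way split on whether $S\cap E^\kappa_{\neq\chi^+}$ is stationary, with $\nu\in\{\chi,\chi^+\}$ as the only candidates, avoids the appeal to Fodor and the secondary case distinction entirely. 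The paper's route yields somewhat more specific information about $\Gamma$ (e.g., in one branch $\Gamma$ consists of regular cardinals, in the others it lies in a single cofinality class), but none of that additional structure is used downstream: the only properties of $\Gamma$ exploited later are stationarity, nonreflection, $\Gamma\subseteq E^\kappa_{\ge\chi}$, and $\Gamma\cap E^\kappa_\mu=\emptyset$ for $\mu=\nu^+$, all of which your $\Gamma$ enjoys.
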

\begin{proof} By the hypothesis of this section, let us fix a stationary subset $R\s E^\kappa_{\ge\chi}$ that does not reflect.

If $R\cap\reg(\kappa)$ is stationary, then we may simply let $\nu:=\chi$ and $\Gamma:=R\cap\reg(\kappa)\setminus(\nu^++1)$.
Next, suppose that $R\cap\reg(\kappa)$ is nonstationary, and use Fodor's lemma to fix a regular cardinal $\theta\ge\chi$ 
for which $R\cap E^\kappa_\theta$ is stationary.

$\br$ If $\theta^+<\kappa$, then we let $\nu:=\theta$. It follows that $\nu^+<\kappa$, 
and $E^\kappa_\theta\cap E^\kappa_{\nu^+}=\emptyset$,
so that $\Gamma:=R\setminus E^\kappa_{\nu^+}$ is as sought.

$\br$ If $\theta^+=\kappa$, then we let $\nu:=\chi$.
As $\chi^{++}<\kappa$, we infer that $\nu^+<\theta<\kappa$, so that $E^\kappa_\theta\cap E^\kappa_{\nu^+}=\emptyset$
and $\Gamma:=R\setminus E^\kappa_{\nu^+}$ is as sought.
\end{proof}

Let $\nu$ and $\Gamma$ be given by the preceding lemma. 
Set $\mu:=\nu^+$, so that $\Gamma\cap E^\kappa_\mu=\emptyset$.
Fix a surjection $g:\kappa\rightarrow\kappa\times\kappa$
such that $G_{\eta,\tau}:=\{\delta\in \Gamma\mid g(\delta)=(\eta,\tau)\}$ is stationary for every $(\eta,\tau)\in\kappa\times\kappa$.
Fix another surjection $h:\kappa\rightarrow\mu$
such that  $H_i:=\{\alpha\in \Gamma\mid h(\alpha)=i\}$ is stationary for every $i<\mu$.

As $\Gamma$ is nonreflecting,
let $\vec C=\langle C_\alpha\mid\alpha<\kappa\rangle$ be a sequence such that $C_{\alpha+1}=\{\alpha\}$ for every $\alpha<\kappa$,
and such that, for every $\alpha\in\acc(\kappa)$, $C_\alpha$ is a club in $\alpha$ with $\acc(C_\alpha)\cap\Gamma=\emptyset$.
By a club-guessing theorem due to Shelah (cf.~\cite[Remark~1.5 and Lemma~2.5]{paper29}), we may also assume that, for every club $D\s\kappa$, there exists $\gamma\in\Gamma$ with $\sup(\nacc(C_\gamma)\cap D)=\gamma$.
Recalling Subsection~\ref{subsectionwalks},
we now let $\Tr,\tr,\lambda$ and $\rho_2$ be the characteristic functions of walking along $\vec C$,
and let $\eta_{\alpha,\beta}$ be the notation established in Definition~\ref{etanotation}.
In addition, we consider yet another function $\tr_h:[\kappa]^2\rightarrow{}^{<\omega}\mu$ which is defined via $\tr_h(\alpha,\beta):=h\circ\tr(\alpha,\beta)$.

Appeal to Lemma~\ref{lemma42} to fix a map $d_0:{}^{<\omega}\mu\rightarrow\omega$ and its corresponding $\mu$-additive proper ideal $J$.
Define $c:[\kappa]^2\rightarrow\kappa\times\kappa$ via 
$$c(\alpha,\beta):=g(\Tr(\alpha,\beta)(d_0(\tr_h(\alpha,\beta)))).$$

We are finally ready to define our transformation.
\begin{defn}
Define $\mathbf t:[\kappa]^2\rightarrow[\kappa]^3$
by letting, for all $(\alpha,\beta)\in[\kappa]^2$, $\mathbf t(\alpha,\beta):=(\tau,\alpha^*,\beta^*)$ provided that the following conditions are met:
\begin{itemize}
\item $(\eta,\tau):=c(\alpha,\beta)$ and $\max\{\eta+1,\tau\}<\alpha$,
\item $\beta^*=\Tr(\alpha,\beta)(\eta_{\alpha,\beta})$ is $>\alpha$, and
\item $\alpha^*=\Tr(\eta+1,\alpha)(\eta_{\eta+1,\alpha})$.
\end{itemize}
Otherwise, let $\mathbf t(\alpha,\beta):=(0,\alpha,\beta)$.
\end{defn}

To verify that $\mathbf t$ witnesses $\pl_1(\kappa,\kappa,\chi)$,
suppose that we are given a family $\mathcal A\s[\kappa]^{<\chi}$ consisting of $\kappa$ many pairwise disjoint sets.
Fix a sequence  $\vec x=\langle x_\delta\mid \delta<\kappa\rangle$ such that,
for all $\delta<\kappa$, $x_\delta\in\mathcal A$ with $\min(x_\delta)>\delta$.

\begin{defn}\label{defseta} For $\eta<\kappa$, $S_{\eta}$ denotes the set of all $\epsilon<\kappa$ 
with the property that, for every $\varsigma<\kappa$, there exist $I\in J^+$ 
and a sequence $\langle \beta_i\mid i\in I\rangle\in\prod_{i\in I}H_i\setminus\varsigma$, such that,
for all $i\in I$ and $\beta\in x_{\beta_i}$:
\begin{enumerate}
\item[(i)]  $i\in\im(\tr_h(\epsilon,\beta))$;
\item[(ii)] $\lambda(\epsilon,\beta)=\eta$;
\item[(iii)]  $\rho_2(\epsilon,\beta)=\eta_{\epsilon,\beta}$.
\end{enumerate}
\end{defn}

\begin{lemma}  There exists $\eta<\kappa$ for which $S_\eta$ is stationary.
\end{lemma}
\begin{proof} By the pressing down lemma, it suffices to prove that, for every club $D\s\kappa$,
there exist $\epsilon\in D$ and $\eta<\epsilon$ for which $\epsilon\in S_\eta$.
Thus, let $D$ be an arbitrary club in $\kappa$.

Define a function $f:\Gamma\rightarrow\kappa$ via
$$f(\delta):=\sup\{\lambda(\delta,\beta)\mid \beta\in x_\delta\}.$$

By Fact~\ref{fact1} and since $|x_\delta|<\chi\le\cf(\delta)$ for all $\delta\in \Gamma$,
$f$ is regressive. So, for all $i<\mu$, let us pick a stationary subset $\bar H_i\subseteq H_i$ such that $f\restriction \bar H_i$ is constant.
Set $\zeta:=\sup(f[\bigcup_{i<\mu}\bar H_i])$.
Now, by the club-guessing feature of $\vec C$, let us pick $\gamma\in\Gamma$ with $\sup(\nacc(C_\gamma)\cap (D\setminus\zeta))=\gamma$.

Let $\varsigma<\kappa$. Fix a sequence $\langle \beta^\varsigma_i\mid i<\mu\rangle\in\prod_{i<\mu}\bar H_i\setminus\max\{\gamma+1,\varsigma\}$.
For every $i<\mu$, by Fact~\ref{fact1}, $\lambda(\gamma,\beta_i^\varsigma)<\gamma$,
so as $\gamma\in\Gamma\s E^\kappa_{\neq\mu}$ and as $J$ is a $\mu$-additive proper ideal on $\mu$,
we may fix $I^\varsigma\in J^+$ along with some ordinal $\xi^\varsigma<\gamma$ such that $\lambda(\gamma,\beta_i^\varsigma)\le\xi^\varsigma$ for all $i\in I^\varsigma$.
Then, pick a large enough $\epsilon^\varsigma\in\nacc(C_\gamma)\cap D$ such that $\sup(C_\gamma\cap\epsilon^\varsigma)>\max\{\xi^\varsigma,\zeta\}$.

Next, by the pigeonhole principle, let us fix $\epsilon\in\nacc(C_\gamma)\cap D$ for which $\Sigma:=\{\varsigma<\kappa\mid \epsilon^\varsigma=\epsilon\}$
is cofinal in $\kappa$.
Put $\eta:=\sup(C_\gamma\cap\epsilon)$, so that $\eta<\epsilon$.

We already know that $\epsilon\in D$. To see that $\epsilon\in S_\eta$, let $\varsigma<\kappa$ be arbitrary. 
By increasing $\varsigma$, we may assume that $\varsigma\in\Sigma$.
Let $i\in I^\varsigma$ and $\beta\in x_{\beta^\varsigma_i}$ be arbitrary. 
As $\beta_i^\varsigma\in H_i$, it suffices to show that:
\begin{enumerate}
\item[(i')] $\tr(\epsilon,\beta)=\tr(\beta_i^\varsigma,\beta){}^\smallfrown\tr(\epsilon,\beta_i^\varsigma)$;
\item[(ii')] $\lambda(\epsilon,\beta)=\eta$;
\item[(iii')]  $\rho_2(\epsilon,\beta)=\eta_{\epsilon,\beta}$.
\end{enumerate}

We have:
$$\lambda(\beta_i^\varsigma,\beta)\le f(\beta_i^\varsigma)\le\zeta\le\max\{\lambda(\gamma,\beta_i^\varsigma),\zeta\}\le\max\{\xi^\varsigma,\zeta\}<\eta<\epsilon<\gamma<\beta_i^\varsigma<\beta.$$
It thus follows from Fact~\ref{fact2} that Clause~(i') is satisfied. It also follows from Fact~\ref{fact2}  that 
$\tr(\epsilon,\beta_i^\varsigma)=\tr(\gamma,\beta_i^\varsigma){}^\smallfrown\tr(\epsilon,\gamma)$,
so that altogether 
$$\tr(\epsilon,\beta)=\tr(\beta_i^\varsigma,\beta){}^\smallfrown\tr(\gamma,\beta_i^\varsigma){}^\smallfrown\tr(\epsilon,\gamma).$$
By Lemma~\ref{fact3} and the above equation,
$$\lambda(\epsilon,\beta)=\max\{\lambda(\beta_i^\varsigma,\beta),\lambda(\gamma,\beta_i^\varsigma),\lambda(\epsilon,\gamma)\}.$$

Recall that $\max\{\lambda(\beta_i^\varsigma,\beta),\lambda(\gamma,\beta_i^\varsigma)\}\le\max\{\zeta,\xi^\varsigma\}<\eta$.
As $\epsilon\in C_\gamma$, we infer that $\lambda(\epsilon,\gamma)=\sup(C_\gamma\cap\epsilon)=\eta$.
In effect, $\lambda(\epsilon,\beta)=\eta$ and $\rho_2(\epsilon,\beta)=\eta_{\epsilon,\beta}$.
\end{proof}

Let $\eta$ be given by the preceding lemma.
Let $D$ be a club in $\kappa$ such that, for all $\delta\in D$, there exists $M_\delta\prec\mathcal H_{\kappa^{+}}$ containing 
the parameter $p:=\{\Gamma,S_\eta,\vec x,\vec C,h,\mu\}$
and satisfying $M_\delta\cap\kappa=\delta$.
Finally, let $$S^*:=S_{\eta}\cap\diagonal_{\tau<\kappa}\acc^+\left(G_{\eta,\tau}\cap\bigcap\nolimits_{j<\mu}\acc^+(H_j\cap D)\right).$$
\begin{lemma}\label{wrappingup}
Let $(\tau^*,\alpha^*,\beta^*)\in \kappa\circledast S^*\circledast S^*$.
There exists $(a,b)\in[\mathcal A]^2$ such that $\mathbf t[a\times b]=\{(\tau^*,\alpha^*,\beta^*)\}$.
\end{lemma}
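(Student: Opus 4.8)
The plan is to locate a single ``coding ordinal'' $\delta$ with $\alpha^*<\delta<\beta^*$, $\delta\in\Gamma$, and $g(\delta)=(\eta,\tau^*)$, and then choose $a,b\in\mathcal A$ so that every walk from a point of $a$ to a point of $b$ visits $\beta^*$ at its $\eta_{\cdot,\cdot}$-th step and $\delta$ at its $d_0(\tr_h(\cdot,\cdot))$-th step, while every walk from $\eta+1$ to a point of $a$ visits $\alpha^*$ at its $\eta_{\cdot,\cdot}$-th step; granting this, $c(\alpha,\beta)=g(\delta)=(\eta,\tau^*)$ and the three bullets defining $\mathbf t(\alpha,\beta)$ all hold, with the inequalities $\max\{\eta+1,\tau^*\}<\alpha<\beta^*\le\beta$ coming for free from the placement of $a$ and $b$. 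To begin, I would unpack the hypothesis: since $\alpha^*,\beta^*\in S^*$ and these are accumulation points, they are limit ordinals, and instantiating the diagonal-intersection clause defining $S^*$ at the index $\tau^*$ gives $\beta^*\in\acc^+(G_{\eta,\tau^*}\cap\bigcap_{j<\mu}\acc^+(H_j\cap D))$. Fix $\delta$ in this set with $\alpha^*<\delta<\beta^*$; as $\delta\in\acc^+(H_j\cap D)$ for every $j<\mu$, the club $D$ is cofinal in $\delta$, so $\delta\in D$ and $M_\delta$ is available, while $D\cap\delta$ remains unbounded in $\delta$, providing pivots $\bar\delta\in D\cap\delta$ with $M_{\bar\delta}\cap\kappa=\bar\delta$. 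Since $\delta\in\Gamma$ and $\Gamma$ is disjoint from $\bigcup_{\xi<\kappa}\acc(C_\xi)$, Fact~\ref{fact1} yields $\lambda(\delta,\beta^*)<\delta$, a bound the lower walks must exceed.

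For the lower part I would apply $\alpha^*\in S_\eta$ with a parameter $\varsigma$ satisfying $\max\{\lambda(\delta,\beta^*),\alpha^*,\eta+1,\tau^*\}<\varsigma<\delta$, done inside a model $M_{\bar\delta}$ with $\bar\delta\in D\cap(\varsigma,\delta)$ (so that $\alpha^*,\eta,\varsigma$ all belong to $M_{\bar\delta}$); elementarity then supplies a witnessing family all of whose members lie below $\bar\delta$, and I set $a:=x_{\alpha_{j_0}}$ for one member $\alpha_{j_0}$, so that $\max(a)<\bar\delta<\delta$ and $\min(a)>\varsigma$. For every $\alpha\in a$, clauses (ii) and (iii) of Definition~\ref{defseta} give $\lambda(\alpha^*,\alpha)=\eta$ and $\rho_2(\alpha^*,\alpha)=\eta_{\alpha^*,\alpha}$; Fact~\ref{fact2} (applicable since $\lambda(\alpha^*,\alpha)=\eta<\eta+1<\alpha^*<\alpha$) gives $\tr(\eta+1,\alpha)=\tr(\alpha^*,\alpha){}^\smallfrown\tr(\eta+1,\alpha^*)$, so $\Tr(\eta+1,\alpha)(\rho_2(\alpha^*,\alpha))=\alpha^*$; and $\rho_2(\alpha^*,\alpha)=\eta_{\alpha^*,\alpha}$ forces $\eta\in C_{\Tr(\alpha^*,\alpha)(\rho_2(\alpha^*,\alpha)-1)}$ while $\eta\notin C_{\Tr(\alpha^*,\alpha)(n)}$ for $n<\rho_2(\alpha^*,\alpha)-1$, whence $\eta_{\eta+1,\alpha}=\rho_2(\alpha^*,\alpha)$ and $\Tr(\eta+1,\alpha)(\eta_{\eta+1,\alpha})=\alpha^*$, as required.

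For the upper part I would apply $\beta^*\in S_\eta$ with a parameter above $\delta$ to obtain $I\in J^+$ and $\langle\beta_i\mid i\in I\rangle\in\prod_{i\in I}H_i$ witnessing clauses (i)--(iii) for all $i\in I$ and $\beta\in x_{\beta_i}$. For such $i,\beta$ and every $\alpha\in a$, clause (ii) (which gives $\lambda(\beta^*,\beta)=\eta<\alpha$), Fact~\ref{fact2}, Lemma~\ref{fact3} and Fact~\ref{fact1} decompose $\tr(\alpha,\beta)=\tr(\beta^*,\beta){}^\smallfrown\tr(\delta,\beta^*){}^\smallfrown\tr(\alpha,\delta)$, with $\beta^*$ and $\delta$ heading the second and third segments; clause (iii), exactly as in the lower part, makes $\beta^*$ the $\eta_{\alpha,\beta}$-th step of this walk, so $\Tr(\alpha,\beta)(\eta_{\alpha,\beta})=\beta^*>\alpha$; and $\beta_i$, lying on the walk from $\beta^*$ to $\beta$ hence on that from $\alpha$ to $\beta$, contributes its $h$-value $i$ to $\tr_h(\alpha,\beta)$ by clause (i). It remains to single out $i_1\in I$ so that, with $b:=x_{\beta_{i_1}}$, one has $d_0(\tr_h(\alpha,\beta))=\rho_2(\beta^*,\beta)+\rho_2(\delta,\beta^*)$ for all $\alpha\in a$ and $\beta\in b$, for then $\Tr(\alpha,\beta)(d_0(\tr_h(\alpha,\beta)))=\delta$. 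This is exactly where Lemma~\ref{lemma42} is invoked: I would feed it a sequence indexed along $I$ (suitably augmented by the index carried near $\delta$), whose $u$-components record, for each $i\in I$, the $h$-images of $\tr(\beta^*,\beta){}^\smallfrown\tr(\delta,\beta^*)$ as $\beta$ runs over $x_{\beta_i}$ -- each of these containing its index $i$ by clause (i) -- and whose relevant $v$-component records the $h$-images of $\tr(\alpha,\delta)$ for $\alpha\in a$, all sharing the common first entry $h(\delta)$; the $\mu$-additivity and normality of $J$ (so that bounded subsets of $\mu$ lie in $J$) together with a pigeonhole then identify the returned pair as the ``mixed'' one and deliver $i_1$. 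Once $a$ and $b$ are fixed, $\max(a)<\delta<\beta^*<\min(b)$ shows $(a,b)\in[\mathcal A]^2$, and assembling the lower and upper parts yields $\mathbf t(\alpha,\beta)=(\tau^*,\alpha^*,\beta^*)$ for every $\alpha\in a$ and $\beta\in b$.

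The step I expect to be the main obstacle is this final coordination: arranging that one fixed ordinal $\delta$ -- carrying the correct $g$-value and simultaneously sitting on every walk parametrized by $I$ -- is the $d_0(\tr_h(\cdot,\cdot))$-th step of \emph{every} walk from $a$ to $b$. One must force the concatenation fed to $d_0$ to split precisely at $\delta$, match it to the ``first factor has the smaller $\mu$-index'' shape demanded by Definition~\ref{defpl6}, reconcile this with the fact that $I$ is $J$-positive and hence unbounded in $\mu$, and carry it all out while pushing $a$ below $\delta$ by elementarity and $b$ above $\beta^*$, and while the surjections $g$ and $h$ are chosen independently. The bookkeeping tying the $J^+$-indexed families extracted from $S_\eta$ to the coding ordinal $\delta$ and its neighbours along the walks is the crux of the argument.
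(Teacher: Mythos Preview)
Your overall architecture is right --- choose a coding ordinal carrying $g$-value $(\eta,\tau^*)$ strictly between $\alpha^*$ and $\beta^*$, arrange $b$ above $\beta^*$ via $\beta^*\in S_\eta$, arrange $a$ below the coding ordinal via $\alpha^*\in S_\eta$ and elementarity, and verify the three bullets in the definition of $\mathbf t$ --- and your treatment of the lower walk (recovering $\alpha^*$ as $\Tr(\eta+1,\alpha)(\eta_{\eta+1,\alpha})$) is correct. The gap is exactly where you flag it: the invocation of Lemma~\ref{lemma42}.

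That lemma does not take a family of $u$'s against a single $v$; it takes a sequence $\langle(u_\alpha,v_\alpha,\sigma_\alpha)\mid\alpha\in A\rangle$ with $A\in J^+$, where \emph{for each index} $\alpha$ the block $v_\alpha$ must satisfy $\sigma_\alpha{}^\smallfrown\langle\alpha\rangle\sqsubseteq\sigma$ for all $\sigma\in v_\alpha$, and it outputs some pair $(\alpha,\beta)\in[A]^2$ over which you have no control. With a single lower set $a$ and a single coding ordinal $\delta$, the only index at which your $v$ is legitimately placed is $j_0:=h(\delta)$; padding the remaining $v_i$'s with dummies leaves you no mechanism to force the returned pair to involve $j_0$. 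Neither pigeonhole nor the $\mu$-additivity of $J$ helps: the lemma may well hand you $(i,i')\in[I]^2$ with both coordinates coming from the upper side, and then the conclusion says nothing about your $a$.

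The paper resolves this by building a \emph{family} of lower data. Having chosen the coding ordinal $\varepsilon\in G_{\eta,\tau^*}\cap\bigcap_{j<\mu}\acc^+(H_j\cap D)$, for every $j<\mu$ it picks $\delta_j\in H_j\cap D\cap\varepsilon$ above $\lambda(\varepsilon,\beta^*)$, and then --- inside $M_{\delta_j}$ and using $\alpha^*\in S_\eta$ --- an $\alpha_j$ with $a_j:=x_{\alpha_j}\subseteq(\varsigma_j,\delta_j)$. Because $h(\delta_j)=j$, every $\sigma\in v_j:=\{\tr_h(\alpha,\varepsilon)\mid\alpha\in a_j\}$ begins with $\tr_h(\delta_j,\varepsilon){}^\smallfrown\langle j\rangle$, so clause~(4) holds at \emph{every} index $j$. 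One then feeds Lemma~\ref{lemma42} the sequence $\langle(u_i,v_i,\sigma_i)\mid i\in I\rangle$ and accepts whatever pair $(i,j)\in[I]^2$ it returns, setting $a:=a_j$ and $b:=b_i$. The separation of roles --- $\varepsilon$ carries the $g$-coding, the $\delta_j$'s carry the $h$-coding needed for clause~(4) --- is precisely what your single-$\delta$ scheme cannot provide.
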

\begin{proof} As $\beta^*\in S^*\s S_{\eta}$, let us pick $I\in J^+$ and a sequence $\langle \beta_i\mid i\in I\rangle\in\prod_{i\in I}H_i\setminus(\beta^*+1)$ such that, for all $i\in I$ and $\beta\in x_{\beta_i}$:
\begin{enumerate}
\item $i\in\im(\tr_h(\beta^*,\beta))$;
\item $\lambda(\beta^*,\beta)=\eta$;
\item  $\rho_2(\beta^*,\beta)=\eta_{\beta^*,\beta}$.
\end{enumerate}

As $(\tau^*,\beta^*)\in\kappa\circledast S^*$,
pick a large enough $\varepsilon\in \left(G_{\eta,\tau^*}\cap\bigcap_{j<\mu}\acc^+(H_j\cap D)\right)\cap \beta^*$ such that $\sup(C_{\beta^*}\cap\varepsilon)>\alpha^*$. In particular, $\lambda(\varepsilon,\beta^*)>\alpha^*>\eta$.

For all $j<\mu$, as $\varepsilon\in\Gamma\cap\acc^+(H_j\cap D)$, Fact~\ref{fact1} entails that we may pick a large enough $\delta_j\in H_j\cap D\cap\varepsilon$ such that $\delta_j>\lambda(\varepsilon,\beta^*)$.
As $M_{\delta_j}$ contains $p$, we have that $S_{\eta}\in M_{\delta_j}$.
As $\delta_j\in\Gamma$, Fact~\ref{fact1} entails that $\varsigma_j:=\max\{\alpha^*,\lambda(\varepsilon,\beta^*),\lambda(\delta_j,\varepsilon)\}+1$ is smaller than $\delta_j$.
Since $\alpha^*\in M_{\delta_j}\cap S_{\eta}$, we may then find $\alpha_j\in M_{\delta_j}\cap(\bigcup_{i<\mu}H_i)\setminus\varsigma_j$ such that, for all $\alpha\in x_{\alpha_j}$:
\begin{enumerate}
\item[(2')] $\lambda(\alpha^*,\alpha)=\eta$;
\item[(3')] $\rho_2(\alpha^*,\alpha)=\eta_{\alpha^*,\alpha}$.
\end{enumerate}

Note that from $\alpha_j\in M_{\delta_j}$, it follows that $\sup(x_{\alpha_j})<\delta_j$. 
Write  $a_j:=x_{\alpha_j}$ and $b_i:=x_{\beta_i}$.
Let $(i,j,\alpha,\beta)\in I\times\mu\times a_j\times b_i$ be arbitrary. Then:
$$\eta<\eta+1<\alpha^*<\varsigma_j\le\alpha_j<\alpha<\delta_j<\varepsilon<\beta^*<\beta_i<\beta.$$
In particular, Fact~\ref{fact2} yields the following conclusions:
\begin{itemize}
\item[(a)] from $\lambda(\beta^*,\beta)=\eta<\alpha<\beta^*<\beta$, we have $\tr(\alpha,\beta)=\tr(\beta^*,\beta){}^\smallfrown\tr(\alpha,\beta^*)$;
\item[(b)] from $\lambda(\varepsilon,\beta^*)<\varsigma_j<\alpha<\varepsilon<\beta^*$, we have $\tr(\alpha,\beta^*)=\tr(\varepsilon,\beta^*){}^\smallfrown\tr(\alpha,\varepsilon)$;
\item[(c)] from $\lambda(\delta_j,\varepsilon)<\varsigma_j<\alpha<\delta_j<\varepsilon$, we have $\tr(\alpha,\varepsilon)=\tr(\delta_j,\varepsilon){}^\smallfrown \tr(\alpha,\delta_j)$.
\end{itemize}
So that, altogether,
$$\tr(\alpha,\beta)=\tr(\beta^*,\beta){}^\smallfrown\tr(\varepsilon,\beta^*){}^\smallfrown\tr(\delta_j,\varepsilon){}^\smallfrown \tr(\alpha,\delta_j).$$

In addition, from $\lambda(\alpha^*,\alpha)=\eta<\eta+1<\alpha^*<\alpha$, we infer that 
\begin{itemize}
\item[(d)] $\tr(\eta+1,\alpha)=\tr(\alpha^*,\alpha){}^\smallfrown\tr(\eta+1,\alpha^*)$.
\end{itemize}

For each $i\in I$, denote $u_i:=\{ \tr_h(\varepsilon,\beta)\mid \beta\in b_i\}$.
For each $j<\mu$, denote $v_j:=\{ \tr_h(\alpha,\varepsilon)\mid\alpha\in a_j\}$.
\begin{claim} 
\begin{enumerate}
\item[(i)] For every $i\in I$, $i\in\im(\varrho)$ for all $\varrho\in u_i$;
\item[(ii)] For every $j<\mu$, there exists $\sigma_j\in{}^{<\omega}\mu$ such that $\sigma_j{}^\smallfrown\langle j\rangle\sqsubseteq \sigma$ for all $\sigma\in v_j$.
\end{enumerate}
\end{claim}
\begin{proof} (i) For all $\beta\in b_i$, $\tr_h(\varepsilon,\beta)=\tr_h(\beta^*,\beta){}^\smallfrown\tr_h(\varepsilon,\beta^*)$,
so the conclusion follows from Clause~(1).

(ii) Since $\delta_j\in H_j$,
by Clause~(c) above, $\tr_h(\delta_j,\varepsilon){}^\smallfrown\langle j\rangle\sqsubseteq\sigma$ for all $\sigma\in v_j$.
\end{proof}

Next, by the choice of $d_0$, fix $(i,j)\in[I]^2$ 
such that $d_0(\varrho{}^\smallfrown\sigma)=\ell(\varrho)$ for all $\varrho\in u_i$ and $\sigma\in v_j$.
Set $a:=a_j$ and $b:=b_i$, so that $(a,b)\in[\mathcal A]^2$.

To see that $\mathbf t[a\times b]=\{(\tau^*,\alpha^*,\beta^*)\}$, fix arbitrary $\alpha\in a$ and $\beta\in b$.
\begin{claim} $c(\alpha,\beta)=(\eta,\tau^*)$.
\end{claim}
\begin{proof} 
Write $\varrho:=\tr_h(\varepsilon,\beta)$ and $\sigma:=\tr_h(\alpha,\varepsilon)$.
Then:
\begin{itemize}
\item $\tr_h(\alpha,\beta)=\varrho{}^\smallfrown\sigma$;
    \item $d_0(\tr_h(\alpha,\beta))=\ell(\varrho)=\ell(\tr(\varepsilon,\beta))=\rho_2(\varepsilon,\beta)$;
    \item $\Tr(\alpha,\beta)(d_0(\tr_h(\alpha,\beta)))=\Tr(\alpha,\beta)(\rho_2(\varepsilon,\beta))=\varepsilon$.
\end{itemize}
So, $c(\alpha,\beta)=g(\Tr(\alpha,\beta)(d_0(\tr_h(\alpha,\beta))))=g(\varepsilon)=(\eta,\tau^*)$.
\end{proof}

By Clause~(a) above, $\tr(\alpha,\beta)=\tr(\beta^*,\beta){}^\smallfrown\tr(\alpha,\beta^*)$,
so Clause~(3) above implies that $\eta_{\alpha,\beta}=\eta_{\beta^*,\beta}=\rho_2(\beta^*,\beta)$.

By Clause~(d) above, $\tr(\eta+1,\alpha)=\tr(\alpha^*,\alpha){}^\smallfrown\tr(\eta+1,\alpha^*)$,
so Clause~(3') above implies that $\eta_{\eta+1,\alpha}=\eta_{\alpha^*,\alpha}=\rho_2(\alpha^*,\alpha)$.
Altogether, $\mathfrak t(\alpha,\beta)=(\tau^*,\alpha^*,\beta^*)$.
\end{proof}

\subsection{Case II}\label{subsectioncase2} In this subsection, we suppose that $\chi^{++}=\kappa$.
Denote $\mu:=\chi^+$. It is clear that $\pl_1(\kappa,\kappa,\chi)$ is equivalent to $\pl_1(\kappa,\mu,\chi)$,
so we shall focus on constructing a witness to the latter.
Denote $\Gamma:=E^\kappa_\mu$.

Fix a function $h:\kappa\rightarrow\mu$ such that, for every $i<\mu$, $H_i:=\{\alpha\in\Gamma\mid h(\alpha)=i\}$ is stationary.
By a club-guessing theorem due to Shelah \cite[\S2]{Sh:365} (cf.~\cite{soukup2010club}), we may fix a $C$-sequence $\vec C=\langle C_\alpha\mid\alpha<\kappa\rangle$
such that:
\begin{itemize}
\item for every $\alpha<\kappa$, $\otp(C_\alpha)=\cf(\alpha)$;
\item for every club $D\s\kappa$ and every $i<\mu$, there exists $\gamma\in H_i$ with $\sup(\nacc(C_\gamma)\cap D)=\gamma$.
\end{itemize}

Note that $\acc(C_\alpha)\cap\Gamma=\emptyset$ for all $\alpha<\kappa$.
Recalling Subsection~\ref{subsectionwalks},
we now let $\Tr,\tr,\lambda$ and $\rho_2$ be the characteristic functions of walking along $\vec C$.
In addition, we consider yet another function $\tr_h:[\kappa]^2\rightarrow{}^{<\omega}\mu$ which is defined via $\tr_h(\alpha,\beta):=h\circ\tr(\alpha,\beta)$.

Fix a sequence $\langle Z_\epsilon\mid \epsilon<\kappa\rangle$ of elements of $[\mu]^\mu$ such that,
for every $(\alpha,\beta)\in[\mu]^\mu$, $|Z_\alpha\cap Z_\beta|<\mu$.

\begin{defn}\label{xialphabeta}
For every ordinal $\xi<\mu$ and a pair $(\alpha,\beta)\in[\kappa]^2$, let
$$\xi^{\alpha,\beta}:=\min\{ n<\omega\mid \xi\in Z_{\Tr(\alpha,\beta)(n)}\text{ or }n=\rho_2(\alpha,\beta)+1\}.$$
\end{defn}

\begin{lemma} There exists a map $d_1:{}^{<\omega}\mu\rightarrow\omega\times\mu\times\mu\times\mu$,
such that, for every $(\tau,\xi,\phi)\in\mu\times\mu\times\mu$ and every sequence $\langle (u_i,v_i,\sigma_i)\mid i<\mu\rangle$, with
\begin{enumerate}
\item  $u_i$ and $v_i$ are nonempty elements of $[{}^{<\omega}\mu]^{<\chi}$;
\item $i\in\im(\varrho)$ for all $\varrho\in u_i$;
\item $\sigma_j{}^\smallfrown\langle j\rangle\sqsubseteq \sigma$ for all $\sigma\in v_j$,
\end{enumerate}
there exist $(i,j)\in[\mu]^2$
satisfying that $d_1(\varrho{}^\smallfrown\sigma)=(\ell(\varrho),\tau,\xi,\phi)$ for all $\varrho\in u_i$ and $\sigma\in v_j$.
\end{lemma}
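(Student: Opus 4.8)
The plan is to obtain $d_1$ directly from Fact~\ref{pl6}. The key observation is that $\mu=\chi^+$ is the successor of the infinite regular cardinal $\chi$, so Fact~\ref{pl6} applies with the role of its $\nu$ played by $\chi$; moreover, after the harmless renaming of the index variable, clauses (1)--(3) of the present statement are literally clauses (2)--(4) of Definition~\ref{defpl6} with $[{}^{<\omega}\mu]^{<\nu}$ read as $[{}^{<\omega}\mu]^{<\chi}$. In particular, the present statement involves no ``eventually regressive'' function $\varphi$, which is precisely the strengthening that Fact~\ref{pl6} provides over the raw $\pl_6(\chi^+,\chi)$. Thus Fact~\ref{pl6} hands us a single map $d:{}^{<\omega}\mu\rightarrow\omega\times\mu\times\mu\times\mu$ which, given any $\gamma^*<\mu$ and any sequence satisfying (1)--(3), produces a pair $(i,j)\in[\mu]^2$ with $d(\varrho{}^\smallfrown\sigma)=(\ell(\varrho),i,j,\gamma^*)$ for all $\varrho\in u_i$ and $\sigma\in v_j$.

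It remains only to repackage this so that a single prescribed parameter carries the three target coordinates $(\tau,\xi,\phi)$, and so that these appear verbatim in the output. To this end I would fix once and for all a bijection $e:\mu\rightarrow\mu\times\mu\times\mu$ (available since $\mu$ is infinite, so $|\mu\times\mu\times\mu|=\mu$), and then define $d_1$ by letting $d_1(\eta):=(n,e(\gamma))$ whenever $d(\eta)=(n,\alpha,\beta,\gamma)$; that is, $d_1$ discards the two middle coordinates of $d(\eta)$ and decodes the last one through $e$. This clearly maps ${}^{<\omega}\mu$ into $\omega\times\mu\times\mu\times\mu$.

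To verify that $d_1$ works, given $(\tau,\xi,\phi)\in\mu\times\mu\times\mu$ and a sequence $\langle(u_i,v_i,\sigma_i)\mid i<\mu\rangle$ as in (1)--(3), I would set $\gamma^*:=e^{-1}(\tau,\xi,\phi)$ and apply Fact~\ref{pl6} to this sequence and to $\gamma^*$; the resulting pair $(i,j)\in[\mu]^2$ then satisfies $d_1(\varrho{}^\smallfrown\sigma)=(\ell(\varrho),e(\gamma^*))=(\ell(\varrho),\tau,\xi,\phi)$ for all $\varrho\in u_i$ and $\sigma\in v_j$, as required. I do not expect any genuine obstacle here: the entire combinatorial content sits in Fact~\ref{pl6}, and the lemma is merely its convenient restatement with an extra parameter slot — the slot that the subsequent walks-on-ordinals construction of this subsection will fill with the prescribed targets.
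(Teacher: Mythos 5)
Your argument is essentially identical to the paper's: both apply Fact~\ref{pl6} with $\nu:=\chi$, fix a bijection between $\mu$ and $\mu\times\mu\times\mu$, and post-compose $d$ to decode the last coordinate while discarding the two middle ones. The only differences are cosmetic (your $e$ is the paper's $\pi$, and you spell out the verification that the paper compresses to ``Evidently, $d_1$ is as sought'').
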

\begin{proof} Let $d:{}^{<\omega}\mu\rightarrow\omega\times\mu\times\mu\times\mu$ be given by Fact~\ref{pl6} using $\nu:=\chi$.
Fix a bijection $\pi:\mu\leftrightarrow\mu\times\mu\times\mu$.
Then, define $d_1:{}^{<\omega}\mu\rightarrow\omega\times\mu\times\mu\times\mu$ by letting 
$d_1(\sigma):=(n,\tau,\xi,\phi)$ whenever $d(\sigma)=(n,i,j,\gamma)$ and $\pi(\gamma)=(\tau,\xi,\phi)$.
Evidently, $d_1$ is as sought.
\end{proof}
Let $d_1:{}^{<\omega}\mu\rightarrow\omega\times\mu\times\mu\times\mu$ be given by the preceding lemma.
For every nonzero $\epsilon<\kappa$, fix a surjection $\psi_\epsilon:\mu\rightarrow\epsilon$.
We are now ready to define our transformation.
\begin{defn}
Define $\mathbf t:[\kappa]^2\rightarrow[\kappa]^3$
by letting, for all $(\alpha,\beta)\in[\kappa]^2$, $\mathbf t(\alpha,\beta):=(\tau^*,\alpha^*,\beta^*)$ provided that,
for $(n,\tau,\xi,\phi):=d_1(\tr_h(\alpha,\beta))$, all of the following conditions are met:
\begin{itemize}
\item $\beta^*=\Tr(\alpha,\beta)(n)$ is $>\alpha$,
\item $\eta:=\psi_{\beta^*}(\phi)$ satisfies that $\eta+1<\alpha$,
\item $\alpha^*=\Tr(\eta+1,\alpha)(\xi^{\eta+1,\alpha})$, and
\item $\tau^*=\tau<\alpha^*$.
\end{itemize}
Otherwise, let $\mathbf t(\alpha,\beta):=(0,\alpha,\beta)$.
\end{defn}

To verify that $\mathbf t$ witnesses $\pl_1(\kappa,\mu,\chi)$,
suppose that we are given a family $\mathcal A\s[\kappa]^{<\chi}$  consisting of $\kappa$ many pairwise disjoint sets.
\begin{lemma} For every $i<\mu$, there exist an ordinal $\zeta_i<\kappa$ and a sequence $\vec{x^i}=\langle x^i_\gamma\mid \gamma\in \Gamma_i\rangle$ such that:
\begin{itemize}
\item $\Gamma_i$ is a stationary subset of $\kappa$;
\item for all $\gamma\in\Gamma_i$, $x_\gamma^i\in\mathcal A$ with $\min(x^i_\gamma)>\gamma$;
\item for all $\gamma\in\Gamma_i$ and $\beta\in x^i_\gamma$, $\lambda(\gamma,\beta)=\zeta_i$ and $i\in\im(\tr_h(\gamma,\beta))$.
\end{itemize}
\end{lemma}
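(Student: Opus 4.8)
The plan is to adapt, one index $i$ at a time, the Case~I argument for the set $S_\eta$, with the club-guessing $C$-sequence $\vec C$ fixed above playing the role there played by Shelah's guessing together with the ideal $J$. To begin, fix once and for all a sequence $\langle a_\delta\mid\delta<\kappa\rangle$ of nonempty members of $\mathcal A$ with $\min(a_\delta)>\delta$ for every $\delta<\kappa$; such a sequence exists since, for each $\delta$, pairwise disjointness forces at most $|\delta+1|<\kappa$ members of $\mathcal A$ to meet $\delta+1$, so $\kappa$-many members of $\mathcal A$ lie entirely above $\delta$. Now fix $i<\mu$. For each $\zeta<\kappa$, let
\[
\Gamma_i^\zeta:=\{\gamma<\kappa\mid \exists\gamma'\in H_i\cap(\gamma,\kappa)\ \forall\beta\in a_{\gamma'}\ [\lambda(\gamma,\beta)=\zeta\ \text{and}\ i\in\im(\tr_h(\gamma,\beta))]\},
\]
and put $T_i:=\{\gamma<\kappa\mid \exists\zeta<\gamma\ (\gamma\in\Gamma_i^\zeta)\}$. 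Once $T_i$ is shown to be stationary, Fodor's lemma yields a stationary $\Gamma_i\s T_i$ and an ordinal $\zeta_i<\kappa$ with $\Gamma_i\s\Gamma_i^{\zeta_i}$; then, for each $\gamma\in\Gamma_i$, letting $x^i_\gamma:=a_{\gamma'}$ for the least $\gamma'$ witnessing $\gamma\in\Gamma_i^{\zeta_i}$, we get $x^i_\gamma\in\mathcal A$ with $\min(x^i_\gamma)=\min(a_{\gamma'})>\gamma'>\gamma$, and the third bullet of the lemma holds by the very definition of $\Gamma_i^{\zeta_i}$. So everything reduces to the stationarity of $T_i$.

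To prove $T_i$ stationary, fix an arbitrary club $D\s\kappa$ and apply the club-guessing feature of $\vec C$ at the index $i$ to obtain $\gamma'\in H_i$ with $\sup(\nacc(C_{\gamma'})\cap D)=\gamma'$. Since $\gamma'\in\Gamma=E^\kappa_\mu$ and $\acc(C_\alpha)\cap\Gamma=\emptyset$ for all $\alpha$, Fact~\ref{fact1} gives $\lambda(\gamma',\beta)<\gamma'$ for each $\beta\in a_{\gamma'}$, and as $|a_{\gamma'}|<\chi<\mu=\cf(\gamma')$ the ordinal $\xi:=\sup\{\lambda(\gamma',\beta)\mid\beta\in a_{\gamma'}\}$ is below $\gamma'$. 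As $\nacc(C_{\gamma'})\cap D$ is cofinal in $\gamma'$ while $\xi<\gamma'$, fix $\gamma\in\nacc(C_{\gamma'})\cap D$ with $\zeta:=\sup(C_{\gamma'}\cap\gamma)>\xi$, so that $\xi<\zeta<\gamma<\gamma'<\beta$ for every $\beta\in a_{\gamma'}$. For such $\beta$: from $\lambda(\gamma',\beta)\le\xi<\gamma<\gamma'<\beta$ and Fact~\ref{fact2} we get $\tr(\gamma,\beta)=\tr(\gamma',\beta){}^\smallfrown\tr(\gamma,\gamma')$, so $\gamma'\in\im(\tr(\gamma,\beta))$ and hence $i=h(\gamma')\in\im(\tr_h(\gamma,\beta))$; and since $\gamma\in\nacc(C_{\gamma'})$, a direct computation of the walk from $\gamma$ to $\gamma'$ gives $\rho_2(\gamma,\gamma')=1$ and $\lambda(\gamma,\gamma')=\sup(C_{\gamma'}\cap\gamma)=\zeta$, so Lemma~\ref{fact3} yields $\lambda(\gamma,\beta)=\max\{\lambda(\gamma',\beta),\lambda(\gamma,\gamma')\}=\zeta$ (using $\lambda(\gamma',\beta)\le\xi<\zeta$). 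As $\beta\in a_{\gamma'}$ was arbitrary, this shows $\gamma\in\Gamma_i^{\zeta}$ with $\zeta<\gamma$, i.e. $\gamma\in T_i\cap D$. Hence $T_i$ is stationary, and the lemma follows as explained above.

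I expect the one delicate point to be the dovetailing just described: $\zeta_i$ must not be chosen in advance, but allowed to arise as $\sup(C_{\gamma'}\cap\gamma)$ for the club-guessing pair $(\gamma,\gamma')$, with a single fixed value extracted only at the end via Fodor's lemma. The joint use of Fact~\ref{fact2} and Lemma~\ref{fact3}---simultaneously routing the walk from $\gamma$ to $\beta$ through the point $\gamma'\in H_i$ and pinning $\lambda(\gamma,\beta)$ down \emph{exactly}---is the heart of the matter, and it hinges on the inequality $|a_{\gamma'}|<\chi<\cf(\gamma')$ that keeps $\xi$ below $\gamma'$.
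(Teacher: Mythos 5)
Your proof is correct and takes essentially the same route as the paper's: club-guessing at $H_i$ to pick $\delta\in H_i$ with $\nacc(C_\delta)\cap D$ cofinal, regressivity via Fact~\ref{fact1} and $|a_\delta|<\chi<\cf(\delta)=\mu$, a single walk-step from $\delta$ into $\nacc(C_\delta)$ to pin $\lambda(\gamma,\beta)=\zeta$ exactly via Fact~\ref{fact2} and Lemma~\ref{fact3}, and Fodor's lemma at the end to extract a constant $\zeta_i$. The only cosmetic difference is that you pre-fix the choice sequence $\langle a_\delta\rangle$ and make the $\Gamma_i^\zeta$/$T_i$ bookkeeping explicit, whereas the paper packages the same reduction as a single appeal to the pressing down lemma.
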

\begin{proof} Let $i<\mu$. By the pressing down lemma, it suffices to prove that, for every club $D\s\kappa$,
there exist $\gamma\in D$, $\zeta<\gamma$ and $x\in\mathcal A$ with $\min(x)>\gamma$ such that 
$\lambda(\gamma,\beta)=\zeta$ and $i\in\tr_h(\gamma,\beta)$ for all $\beta\in x$.
Thus, let $D$ be an arbitrary club in $\kappa$.

By the choice of $\vec C$, fix $\delta\in H_i$ such that $\sup(\nacc(C_\delta)\cap D)=\delta$. 
Then, fix any $x\in\mathcal A$ with $\min(x)>\delta$. As $\delta\in\Gamma$ and $|x|<\chi<\cf(\delta)$,
Fact~\ref{fact1} entails that we may find a large enough $\gamma\in \nacc(C_\delta)\cap D$ with $\zeta:=\sup(C_\delta\cap\gamma)$
being greater than $\sup_{\beta\in x}\lambda(\delta,\beta)$.
Now, for every $\beta\in x$, we have $\lambda(\delta,\beta)<\zeta<\gamma<\delta<\beta$,
so, by Fact~\ref{fact2}, $\tr(\gamma,\beta)=\tr(\delta,\beta){}^\smallfrown\tr(\gamma,\delta)$.
In particular, $i=h(\delta)\in\im(\tr_h(\gamma,\beta))$
Next, by Lemma~\ref{fact3}, $\lambda(\gamma,\beta)=\max\{\lambda(\delta,\beta),\lambda(\gamma,\delta)\}$.
As $\gamma\in C_\delta$, we have $\lambda(\gamma,\delta)=\sup(C_\delta\cap\gamma)>\zeta=\lambda(\delta,\beta)$, so that, altogether,
$\lambda(\gamma,\beta)=\zeta$.
\end{proof}

For each $i<\mu$, let $\zeta_i$ and $\vec{x^i}=\langle x^i_\gamma\mid \gamma\in \Gamma_i\rangle$ be given by the preceding lemma. 
For notational simplicity, we shall drop the superscript $i$, writing $\vec{x^i}=\langle x_\gamma\mid \gamma\in \Gamma_i\rangle$.\footnote{This is formally legitimate
provided that the stationary sets in $\langle \Gamma_i\mid i<\mu\rangle$ are pairwise disjoint. Now, as $\mu$ is regular,
for any sequence $\langle \Gamma_i\mid i<\mu\rangle$ of stationary subsets of $\mu^+$,
there exists a sequence $\langle \bar\Gamma_i\mid i<\mu\rangle$ of pairwise disjoint stationary sets such that $\bar\Gamma_i\s\Gamma_i$ for all $i<\mu$ (cf.~\cite{paper47}). 
So, we may as well assume that the original sequence consists of pairwise disjoint sets.}
Set $\zeta:=\sup_{i<\mu}\zeta_i$.

\begin{defn} 
For $\eta<\kappa$ and $\xi,\phi<\mu$, $S_{\eta,\xi,\phi}$ denotes the set of all $\epsilon\in\Gamma$ 
with the property that, for every $\varsigma<\kappa$, there exists
a sequence $\langle \beta_i\mid i<\mu\rangle\in\prod_{i<\mu}\Gamma_i\setminus\varsigma$ such that,
for all $i<\mu$ and $\beta\in x_{\beta_i}$:
\begin{enumerate}
\item[(i)]  $\tr(\epsilon,\beta)=\tr(\beta_i,\beta){}^\smallfrown\tr(\epsilon,\beta_i)$;
\item[(ii)] $\lambda(\epsilon,\beta)<\epsilon$;
\item[(iii)]  if $i=0$, then $\lambda(\epsilon,\beta)=\eta=\psi_\epsilon(\phi)$, and $\rho_2(\epsilon,\beta)=\xi^{\epsilon,\beta}$.
\end{enumerate}
\end{defn}
\begin{lemma} There exist $\eta<\kappa$ and $\xi,\phi<\mu$ for which $S_{\eta,\xi,\phi}$ is stationary.
\end{lemma}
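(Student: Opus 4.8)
I would prove this by a pressing-down argument entirely parallel to the corresponding lemma in Case~I (the one just above, establishing that $S_\eta$ is stationary), but now with the richer data $(\eta,\xi,\phi)$ that must be guessed. First I would set up the skeleton: it suffices to show that for every club $D\s\kappa$ there exist $\epsilon\in D$ and $\eta<\epsilon$, $\xi,\phi<\mu$ with $\epsilon\in S_{\eta,\xi,\phi}$; then the diagonal union / Fodor argument over $\kappa$ (using the pigeonhole on $\kappa$ to stabilize the triple $(\eta,\xi,\phi)$ along a cofinal set, and on $\epsilon$ itself) yields a single stationary $S_{\eta,\xi,\phi}$. Fix such a $D$.

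Next I would extract the club-guessing ordinal. For each $i<\mu$ let $\bar\Gamma_i\s\Gamma_i$ be a stationary set on which the regressive map $\gamma\mapsto\sup\{\lambda(\gamma,\beta)\mid\beta\in x_\gamma\}$ is constant (this uses $|x_\gamma|<\chi\le\cf(\gamma)$ together with Fact~\ref{fact1}, exactly as in Case~I); absorbing into $\zeta$ the constant values and recalling $\zeta=\sup_{i<\mu}\zeta_i$, we arrange all these suprema to lie below $\zeta$. Using the club-guessing feature of $\vec C$ (applied with $i=0$, say), pick $\gamma\in H_0$ with $\sup(\nacc(C_\gamma)\cap(D\setminus\zeta))=\gamma$. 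Then for a given threshold $\varsigma$, choose $\langle\beta^\varsigma_i\mid i<\mu\rangle\in\prod_{i<\mu}\bar\Gamma_i\setminus\max\{\gamma+1,\varsigma\}$; for each $i$, $\lambda(\gamma,\beta^\varsigma_i)<\gamma$ by Fact~\ref{fact1}, and since $\gamma\in\Gamma=E^\kappa_\mu$ is \emph{not} of cofinality $\mu$ we cannot yet uniformly bound these below $\gamma$ the way Case~I did via $\mu$-additivity of $J$ — here there is no ideal, but $\cf(\gamma)=\mu$ and the family has size $\mu$, so instead I would thin: use a surjection and regularity of $\mu$ against $\cf(\gamma)=\mu$ is the wrong inequality, so the correct move is simply to pick $\epsilon^\varsigma\in\nacc(C_\gamma)\cap D$ with $\sup(C_\gamma\cap\epsilon^\varsigma)$ above $\zeta$ \emph{and} above $\lambda(\gamma,\beta^\varsigma_i)$ for all $i<\mu$ simultaneously, which is possible because $\cf(\gamma)=\mu>$ nothing is needed — wait, we need $\sup_i\lambda(\gamma,\beta^\varsigma_i)<\gamma$, and since each term is $<\gamma$ and there are $\mu=\cf(\gamma)$ of them this need not hold. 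So here the genuine point is to first pass to a cofinal-in-$\mu$ subset $I^\varsigma$ of indices on which $\lambda(\gamma,\beta^\varsigma_i)$ is bounded below $\gamma$ (possible since $\cf(\gamma)=\mu$ and we just need one bounded cofinal-in-$\kappa$… no: bounded subset of size $\mu$), and this \textbf{is} the analogue of ``$I^\varsigma\in J^+$'' from Case~I. Then pick $\epsilon^\varsigma\in\nacc(C_\gamma)\cap D$ with $\sup(C_\gamma\cap\epsilon^\varsigma)$ exceeding both that bound and $\zeta$; pigeonhole on $\varsigma$ to fix $\epsilon$ with $\Sigma:=\{\varsigma\mid\epsilon^\varsigma=\epsilon\}$ cofinal; set $\eta:=\sup(C_\gamma\cap\epsilon)<\epsilon$.

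Then I would compute the walk decomposition exactly as in Case~I: for $\varsigma\in\Sigma$, $i\in I^\varsigma$, $\beta\in x_{\beta^\varsigma_i}$, the inequalities $\lambda(\beta^\varsigma_i,\beta)\le\zeta\le$ bound $<\eta<\epsilon<\gamma<\beta^\varsigma_i<\beta$ give, via Facts~\ref{fact1}, \ref{fact2} and Lemma~\ref{fact3}, that $\tr(\epsilon,\beta)=\tr(\beta^\varsigma_i,\beta){}^\smallfrown\tr(\gamma,\beta^\varsigma_i){}^\smallfrown\tr(\epsilon,\gamma)$, that $\lambda(\epsilon,\beta)=\eta<\epsilon$, and that $\rho_2(\epsilon,\beta)=\xi^{\epsilon,\beta}$ for a suitable choice of $\xi$ — and here is where the new parameters enter. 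I still have freedom to choose $\xi,\phi<\mu$: since $\epsilon\in\Gamma$ and $\psi_\epsilon:\mu\to\epsilon$ is a surjection, pick $\phi$ with $\psi_\epsilon(\phi)=\eta$; and since the length $\rho_2(\epsilon,\beta)$ of the walk stabilizes (it equals $\ell(\tr(\beta^\varsigma_i,\beta))+\ell(\tr(\gamma,\beta^\varsigma_i))+\ell(\tr(\epsilon,\gamma))$ and the last two summands are fixed, so one further thinning of $I^\varsigma$ arranges $\rho_2(\epsilon,\beta)$ constant), choose $\xi<\mu$ large enough — using $|Z_{\Tr(\epsilon,\beta)(n)}|=\mu$ and that we only care about $n\le\rho_2(\epsilon,\beta)$ — so that $\xi^{\epsilon,\beta}=\rho_2(\epsilon,\beta)$ for all the relevant $\beta$; more precisely, $\xi^{\epsilon,\beta}$ equals $\rho_2(\epsilon,\beta)+1$ unless $\xi$ appears in one of the finitely many $Z_{\Tr(\epsilon,\beta)(n)}$ with $n\le\rho_2$, so I must instead arrange $\xi\notin\bigcup_n Z_{\Tr(\epsilon,\beta)(n)}$ — or, matching Definition~\ref{xialphabeta}'s use downstream, simply track whichever value of $\xi$ makes Clause~(iii)'s equation hold and stabilize it by a final pigeonhole over the cofinally-many $\varsigma$. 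Either way, with $(\eta,\xi,\phi)$ now fixed, for $I:=I^\varsigma$ (further thinned) we verify Clauses (i)--(iii) of the definition of $S_{\eta,\xi,\phi}$ with witnessing sequence $\langle\beta^\varsigma_i\mid i<\mu\rangle$ (padding by arbitrary elements of $\Gamma_i$ for $i\notin I$, which is harmless since Clause~(iii) only constrains $i=0$, and we keep $0\in I$ throughout), so $\epsilon\in S_{\eta,\xi,\phi}$.

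\textbf{The main obstacle.} The delicate point, and the one that differs from Case~I, is the handling of the index set: Case~I had a $\mu$-additive ideal $J$ to absorb the thinnings and quantify ``$I\in J^+$'', whereas here the statement of $S_{\eta,\xi,\phi}$ quantifies over \emph{all} $i<\mu$ with no ideal, so I must be careful that every successive thinning (bounding $\lambda(\gamma,\beta^\varsigma_i)$, stabilizing $\rho_2$, stabilizing $\xi^{\epsilon,\beta}$) is absorbed by re-choosing the \emph{whole} sequence $\langle\beta_i\mid i<\mu\rangle$ rather than merely a sub-indexed one — this works precisely because at the end we only need, for each $i<\mu$ and each $\varsigma$, \emph{some} valid $\beta_i\in\Gamma_i\setminus\varsigma$, and the constraints (ii)--(iii) for $i\ne 0$ are just ``$\lambda(\epsilon,\beta)<\epsilon$'', which Fact~\ref{fact1} gives for free once $\min(x_{\beta_i})>\epsilon$. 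So the real content is organizing the order of choices so that the club-guessing is done \emph{before} fixing $\epsilon$, the parameters $\eta,\xi,\phi$ are pinned down \emph{after} $\epsilon$ but \emph{uniformly} in $\varsigma$ via the cofinal set $\Sigma$, and Clause~(iii) — being about $i=0$ only — is arranged along the single distinguished coordinate. The walk-calculus inputs (Facts~\ref{fact1}, \ref{fact2}, Lemma~\ref{fact3}) are then entirely routine, as they were in Case~I.
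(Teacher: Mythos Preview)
Your proposal has a genuine structural gap: by definition, $S_{\eta,\xi,\phi}$ consists only of ordinals $\epsilon\in\Gamma=E^\kappa_\mu$, but your construction produces $\epsilon\in\nacc(C_\gamma)\cap D$ with no control on $\cf(\epsilon)$. Since $\otp(C_\gamma)=\cf(\gamma)=\mu$, the nonaccumulation points of $C_\gamma$ carry no cofinality information, and there is no mechanism in the Case~II setup to force $\nacc(C_\gamma)\cap D\cap\Gamma$ to be cofinal in $\gamma$. Without $\epsilon\in\Gamma$, your padding argument for indices $i\notin I^\varsigma$ also breaks: Clause~(ii) demands $\lambda(\epsilon,\beta)<\epsilon$ for \emph{every} $i<\mu$, and this would follow from Fact~\ref{fact1} precisely when $\epsilon\notin\bigcup_\alpha\acc(C_\alpha)$, which in Case~II is guaranteed for $\epsilon\in\Gamma$ (since $\otp(C_\alpha)\le\mu$ forces $\acc(C_\alpha)\cap E^\kappa_\mu=\emptyset$) but not otherwise. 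Your handling of $\xi$ is also off: to obtain $\xi^{\epsilon,\beta}=\rho_2(\epsilon,\beta)$ one needs $\xi\in Z_\epsilon$ \emph{and} $\xi\notin Z_\tau$ for each $\tau\in\im(\tr(\epsilon,\beta))$; your suggestion ``$\xi\notin\bigcup_n Z_{\Tr(\epsilon,\beta)(n)}$'' yields $\rho_2(\epsilon,\beta)+1$, not $\rho_2(\epsilon,\beta)$.

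The paper's proof takes a completely different and much simpler route, with \emph{no} club-guessing and \emph{no} intermediate ordinal $\gamma$. It works directly on $\epsilon\in\Gamma\setminus(\zeta+1)$: for each $\varsigma>\epsilon$ set $\beta_i^\varsigma:=\min(\Gamma_i\setminus\varsigma)$, so that $\lambda(\beta_i^\varsigma,\beta)=\zeta_i<\epsilon$ (this is exactly what the preparatory lemma producing the $\Gamma_i$ and $\zeta_i$ was for) and $\lambda(\epsilon,\beta_i^\varsigma)<\epsilon$ by Fact~\ref{fact1}. Clauses~(i) and~(ii) are then immediate for \emph{all} $i<\mu$ via Fact~\ref{fact2} and Lemma~\ref{fact3}, with $\lambda(\epsilon,\beta)=\max\{\zeta_i,\lambda(\epsilon,\beta_i^\varsigma)\}=:f^\varsigma_\epsilon(i)$. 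For Clause~(iii) at $i=0$: pigeonhole on $\varsigma$ to stabilise $\eta_\epsilon:=f^\varsigma_\epsilon(0)$, take $\phi_\epsilon$ with $\psi_\epsilon(\phi_\epsilon)=\eta_\epsilon$, and use almost-disjointness of the $Z_\alpha$'s to find $\xi_\epsilon\in Z_\epsilon\setminus\bigcup\{Z_\tau\mid\tau\in\im(\tr(\epsilon,\beta)),\ \beta\in x_{\beta_0^\varsigma}\}$ working for cofinally many $\varsigma$. A final Fodor/pigeonhole on the stationary set $\Gamma\setminus(\zeta+1)$ stabilises $(\eta_\epsilon,\xi_\epsilon,\phi_\epsilon)$. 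The point you missed is that in Case~II the walk-splitting at $\beta_i$ has already been pre-packaged into the $\Gamma_i$, so there is no need to manufacture a further split at a club-guessing $\gamma$; the entire Case~I apparatus is simply absent here.
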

\begin{proof} For all $i<\mu$ and $\varsigma<\kappa$, denote $\beta_i^\varsigma:=\min(\Gamma_i\setminus\varsigma)$.

Let $\epsilon\in\Gamma\setminus(\zeta+1)$.
For every $\varsigma$ in the interval $(\epsilon,\kappa)$, define $f_\epsilon^\varsigma:\mu\rightarrow\epsilon$ via $f_\epsilon^\varsigma(i):=\max\{\zeta_i,\lambda(\epsilon,\beta_i^{\varsigma})\}$.
Now, find $\eta_\epsilon<\epsilon$ and $\phi_\epsilon,\xi_\epsilon<\mu$ for which 
$$\Sigma_\epsilon:=\left\{\varsigma\in(\epsilon,\kappa)\mid f^\varsigma_\epsilon(0)=\eta_\epsilon=\psi_\epsilon(\phi_\epsilon)\ \&\ \xi_\epsilon\in Z_\epsilon\setminus\bigcup\{Z_\tau\mid \tau\in\im(\tr(\epsilon,\beta)), \beta\in x_{\beta^\varsigma_{0}} \}\right\}$$
 is cofinal in $\kappa$.

Finally, find $\eta,\xi,\phi$ for which $S:=\{\epsilon\in\Gamma\setminus(\zeta+1)\mid (\eta,\xi,\phi)=(\eta_\epsilon,\xi_\epsilon,\phi_\epsilon)\}$ is stationary.
We claim that $S\s S_{\eta,\xi,\phi}$. Let $\epsilon\in S$ be arbitrary;
to see that $\epsilon\in S_{\eta,\xi,\phi}$, let $\varsigma<\kappa$ be arbitrary. 
By increasing $\varsigma$, we may assume that $\varsigma\in\Sigma_\epsilon$.
Let $i<\mu$ and $\beta\in x_{\beta^\varsigma_i}$ be arbitrary.
We will show that:

\begin{enumerate}
\item[(i')] $\tr(\epsilon,\beta)=\tr(\beta_i^\varsigma,\beta){}^\smallfrown\tr(\epsilon,\beta_i^\varsigma)$;
\item[(ii')] $\lambda(\epsilon,\beta)=f_\epsilon^\varsigma(i)$;
\item[(iii')] if $i=0$, then $\rho_2(\epsilon,\beta)=\xi^{\epsilon,\beta}$.
\end{enumerate}

As $\lambda(\beta_i^\varsigma,\beta)=\zeta_i<\epsilon<\beta_i^\varsigma<\beta$,
it  follows from Fact~\ref{fact2} that Clause~(i') is satisfied,
and it follows from Lemma~\ref{fact3} that 
$$\lambda(\epsilon,\beta)=\max\{\lambda(\beta_i^\varsigma,\beta),\lambda(\epsilon,\beta_i^\varsigma)\}=\max\{\zeta_i,\lambda(\epsilon,\beta_i^\varsigma)\}=f_\epsilon^\varsigma(i).$$
In addition, from $\varsigma\in\Sigma_\epsilon$, Clause~(iii') is satisfied.
\end{proof}

Let $\eta,\xi,\phi$ be given by the preceding lemma.
Let $D$ be a club in $\kappa$ such that, for all $\delta\in D$, there exists $M_\delta\prec\mathcal H_{\kappa^{+}}$ containing 
the parameter $p:=\{\Gamma,S_{\eta,\xi,\phi},\vec{x^0},\vec C,h,\mu\}$
and satisfying $M_\delta\cap\kappa=\delta$.
Finally, let $$S^*:=S_{\eta,\xi,\phi}\cap\bigcap\nolimits_{j<\mu}\acc^+(H_j\cap D).$$
\begin{lemma}
Let $(\tau^*,\alpha^*,\beta^*)\in\mu\circledast S^*\circledast S^*$. There exists $(a,b)\in[\mathcal A]^2$ such that $\mathbf t[a\times b]=\{(\tau^*,\alpha^*,\beta^*)\}$.
\end{lemma}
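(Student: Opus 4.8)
The plan is to follow the template of Lemma~\ref{wrappingup}, adapting the bookkeeping to the present transformation $\mathbf t$. Since $\beta^*\in S^*\s S_{\eta,\xi,\phi}$, its defining property supplies, for each bound $\varsigma<\kappa$ to be fixed below, a sequence $\langle\beta_i\mid i<\mu\rangle\in\prod_{i<\mu}\Gamma_i\setminus\varsigma$ for which clauses (i)--(iii) hold for all $i<\mu$ and $\beta\in x_{\beta_i}$; put $b_i:=x_{\beta_i}$. From clause~(i), $\beta_i\in\im(\tr(\beta^*,\beta))$, so $i=h(\beta_i)\in\im(\tr_h(\beta^*,\beta))$ for every $\beta\in b_i$, while the $i=0$ instance of clause~(iii) yields $\psi_{\beta^*}(\phi)=\eta$; and by Lemma~\ref{fact3} together with clause~(i) and the defining property of $\Gamma_i$, $\lambda(\beta^*,\beta)=\max\{\zeta_i,\lambda(\beta^*,\beta_i)\}$ for $\beta\in b_i$.

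Next I build the $a$-side via elementarity, exactly as in Lemma~\ref{wrappingup}. As $\alpha^*<\beta^*$ and $\beta^*\in\bigcap_{j<\mu}\acc^+(H_j\cap D)$, for each $j<\mu$ I pick $\delta_j\in (H_j\cap D)\cap(\alpha^*,\beta^*)$ (large) together with the corresponding $M_{\delta_j}\prec\mathcal H_{\kappa^{+}}$ satisfying $p\in M_{\delta_j}$ and $M_{\delta_j}\cap\kappa=\delta_j$. Since $\alpha^*,S_{\eta,\xi,\phi},\vec{x^0}\in M_{\delta_j}$ and $\varsigma_j:=\max\{\alpha^*,\lambda(\delta_j,\beta^*)\}+1<\delta_j$ (using Fact~\ref{fact1} and $\delta_j\in\Gamma$), elementarity provides inside $M_{\delta_j}$ a witnessing sequence for $\alpha^*\in S_{\eta,\xi,\phi}$ with bound $\varsigma_j$; let $\alpha_j\in M_{\delta_j}\cap\Gamma_0\setminus\varsigma_j$ be its $0$th term and set $a_j:=x_{\alpha_j}$. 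Then $a_j\in\mathcal A$, $\alpha^*<\alpha_j<\min(a_j)$, $\sup(a_j)<\delta_j$, and the $i=0$ instance of clause~(iii) applied to $\alpha^*$ gives $\lambda(\alpha^*,\alpha)=\eta$ and $\rho_2(\alpha^*,\alpha)=\xi^{\alpha^*,\alpha}$ for every $\alpha\in a_j$. Granting that $\langle\beta_i\rangle$ has been chosen so that $\sup_{i<\mu}\lambda(\beta^*,\beta_i)$ lies below every $\alpha\in\bigcup_{j<\mu}a_j$ (see the last paragraph), Fact~\ref{fact2} yields, for all $\alpha\in a_j$ and $\beta\in b_i$,
\[
\tr(\alpha,\beta)=\tr(\beta^*,\beta){}^\smallfrown\tr(\delta_j,\beta^*){}^\smallfrown\tr(\alpha,\delta_j),\qquad\tr(\eta+1,\alpha)=\tr(\alpha^*,\alpha){}^\smallfrown\tr(\eta+1,\alpha^*),
\]
the latter since $\lambda(\alpha^*,\alpha)=\eta<\eta+1<\alpha^*<\alpha$ (note $\alpha^*\in E^\kappa_\mu$ is a limit, so $\eta+1<\alpha^*$).

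Now put $u_i:=\{\tr_h(\beta^*,\beta)\mid\beta\in b_i\}$, $v_j:=\{\tr_h(\alpha,\beta^*)\mid\alpha\in a_j\}$ and $\sigma_j:=\tr_h(\delta_j,\beta^*)$. These are nonempty members of $[{}^{<\omega}\mu]^{<\chi}$; $i\in\im(\varrho)$ for $\varrho\in u_i$ by the first paragraph; and $\sigma_j{}^\smallfrown\langle j\rangle\sqsubseteq\sigma$ for $\sigma\in v_j$, since $\tr_h(\alpha,\beta^*)=\sigma_j{}^\smallfrown\tr_h(\alpha,\delta_j)$ and $\tr_h(\alpha,\delta_j)$ begins with $h(\delta_j)=j$. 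Feeding this into the map $d_1$ with parameter $(\tau,\xi,\phi):=(\tau^*,\xi,\phi)$ produces $(i,j)\in[\mu]^2$ with $d_1(\varrho{}^\smallfrown\sigma)=(\ell(\varrho),\tau^*,\xi,\phi)$ for all $\varrho\in u_i$, $\sigma\in v_j$. Set $a:=a_j$ and $b:=b_i$ (which form a pair in $[\mathcal A]^2$, as $\max(a)<\beta^*<\min(b)$). For $\alpha\in a$ and $\beta\in b$ we then have $\tr_h(\alpha,\beta)=\varrho{}^\smallfrown\sigma$ with $\varrho=\tr_h(\beta^*,\beta)\in u_i$ and $\sigma=\tr_h(\alpha,\beta^*)\in v_j$, whence $d_1(\tr_h(\alpha,\beta))=(\rho_2(\beta^*,\beta),\tau^*,\xi,\phi)$. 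Running the definition of $\mathbf t(\alpha,\beta)$ with this $(n,\tau,\xi,\phi)$: $\Tr(\alpha,\beta)(n)=\beta^*>\alpha$; $\psi_{\beta^*}(\phi)=\eta$ with $\eta+1<\alpha^*<\alpha$; $\Tr(\eta+1,\alpha)(\xi^{\eta+1,\alpha})=\alpha^*$, because along $\tr(\eta+1,\alpha)=\tr(\alpha^*,\alpha){}^\smallfrown\tr(\eta+1,\alpha^*)$ the equality $\rho_2(\alpha^*,\alpha)=\xi^{\alpha^*,\alpha}$ forces $\xi$ to first enter a $Z$-set precisely at $\alpha^*$; and $\tau^*=\tau<\alpha^*$. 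Hence $\mathbf t(\alpha,\beta)=(\tau^*,\alpha^*,\beta^*)$, so $\mathbf t[a\times b]=\{(\tau^*,\alpha^*,\beta^*)\}$.

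\emph{The main obstacle} is the parenthetical claim in the second paragraph: one must route the walk from every $\alpha$ on the $a$-side up through $\beta^*$, i.e.\ one needs $\lambda(\beta^*,\beta)<\alpha$ uniformly over $\beta\in\bigcup_i b_i$ and $\alpha\in\bigcup_j a_j$. Clause~(ii) of $S_{\eta,\xi,\phi}$ only delivers $\lambda(\beta^*,\beta_i)<\beta^*$ for each single $i$, and since here the clubs $C_\alpha$ need not guess well at $\beta^*$ there is no a~priori uniform bound; this is precisely the place where, in Lemma~\ref{wrappingup}, the intermediate point $\varepsilon$ (and the inequality $\lambda(\varepsilon,\beta^*)<\alpha$) does its work. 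I expect the correct order of business to be: first fix the $a$-side, extracting an ordinal $\rho<\beta^*$ above which all $\delta_j$'s (hence all relevant $\alpha$'s) sit, then re-invoke the defining property of $S_{\eta,\xi,\phi}$ at $\beta^*$ --- thinning through a further layer of submodels as on the $a$-side and exploiting $\beta^*\in\bigcap_{j<\mu}\acc^+(H_j\cap D)$ --- to secure $\langle\beta_i\rangle$ with $\lambda(\beta^*,\beta_i)<\rho$ for all $i<\mu$ simultaneously. Making all of these $\lambda$-inequalities hold at once, while keeping the $i$-content and $j$-content in the two families of walks, is the one genuinely delicate point of the proof; everything else is the routine walks-bookkeeping already present in Case~I.
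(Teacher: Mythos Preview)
You have correctly identified the gap, but your proposed repair is heading in the wrong direction, and the actual fix is much simpler than you suspect.

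Your two-pass idea---first fix the $a$-side, extract a single $\rho<\beta^*$ below all the $\alpha$'s, then re-invoke $S_{\eta,\xi,\phi}$ to get $\langle\beta_i\rangle$ with $\lambda(\beta^*,\beta_i)<\rho$---does not obviously work: the defining property of $S_{\eta,\xi,\phi}$ lets you push the $\beta_i$'s \emph{above} any prescribed $\varsigma$, but gives you no handle on bounding $\lambda(\beta^*,\beta_i)$ \emph{below} a prescribed $\rho<\beta^*$. Clause~(ii) only says $\lambda(\beta^*,\beta)<\beta^*$. No amount of submodel trickery will extract a uniform bound from this.

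The paper's solution exploits a feature of $d_1$ that you are not using: the pair $(i,j)$ it returns lies in $[\mu]^2$, so $i<j$. You therefore do \emph{not} need $\lambda(\beta^*,\beta)<\alpha$ for all $(i,j)$, only for the eventual pair with $i<j$. The trick is to interleave: first fix the sequence $\langle\beta_i\mid i<\mu\rangle$ once and for all, and record $f(i):=\sup\{\lambda(\beta^*,\beta)\mid\beta\in x_{\beta_i}\}<\beta^*$ for each $i$. Then, when choosing $\delta_j$, additionally require $\delta_j>\sup_{i<j}f(i)$ (possible since $\cf(\beta^*)=\mu$ and you are only supping over $j$ many values), and fold $\sup_{i<j}f(i)$ into $\varsigma_j$. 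Now for the specific $(i,j)$ with $i<j$ that $d_1$ returns, every $\alpha\in a_j$ satisfies $\alpha>\alpha_j\ge\varsigma_j>f(i)\ge\lambda(\beta^*,\beta)$ for every $\beta\in b_i$, and the walk splits as required. Everything else in your write-up is correct.
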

\begin{proof} As $\beta^*\in S^*\s S_{\eta,\xi,\phi}$, let us fix a sequence $\langle \beta_i\mid i<\mu\rangle\in\prod_{i<\mu}\Gamma_i\setminus(\beta^*+1)$ 
such that, for all $i<\mu$ and $\beta\in x_{\beta_i}$:
\begin{enumerate}
\item $\tr(\beta^*,\beta)=\tr(\beta_i,\beta){}^\smallfrown\tr(\beta^*,\beta_i)$;
\item $\lambda(\beta^*,\beta)<\beta^*$;
\item $\psi_{\beta^*}(\phi)=\eta$.
\end{enumerate}

For each $i<\mu$, $|x_{\beta_i}|<\chi<\cf(\beta^*)$, so we may define a function $f:\mu\rightarrow\beta^*$ via $f(i):=\sup\{\lambda(\beta^*,\beta)\mid \beta\in x_{\beta_i}\}$.
For all $j<\mu$, as $\beta^*\in\Gamma\cap\acc^+(H_j\cap D)$, we may pick a large enough $\delta_j\in H_j\cap D\cap\beta^*$ such that $\delta_j>\max\{\alpha^*,\sup_{i<j}f(i)\}$.
As $M_{\delta_j}$ contains $p$, we have that $S_{\eta,\xi,\phi}\in M_{\delta_j}$.
As $\delta_j\in\Gamma$, Fact~\ref{fact1} entails that $\varsigma_j:=\max\{\alpha^*,\sup_{i<j}f(i),\lambda(\delta_j,\beta^*)\}+1$ is smaller than $\delta_j$.
Since $\alpha^*\in M_{\delta_j}\cap S_{\eta,\xi,\phi}$, we may then find $\alpha_j\in M_{\delta_j}\cap \Gamma_0\setminus\varsigma_j$ such that, for all $\alpha\in x_{\alpha_j}$:
\begin{enumerate}
\item[(4)] $\lambda(\alpha^*,\alpha)=\eta$ and $\rho_2(\alpha^*,\alpha)=\xi^{\alpha^*,\alpha}$.
\end{enumerate}

Note that from $\alpha_j\in M_{\delta_j}$, it follows that $\sup(x_{\alpha_j})<\delta_j$. 
Write  $a_j:=x_{\alpha_j}$ and $b_i:=x_{\beta_i}$.
Fix arbitrary $(i,j)\in[\mu]^2$ and $(\alpha,\beta)\in a_j\times b_i$. Then:
$$\eta+1<\alpha^*\le\max\{\alpha^*,\lambda(\beta^*,\beta),\lambda(\delta_j,\beta^*)\}\le\varsigma_j\le\alpha_j<\alpha<\delta_j<\beta^*<\beta_i<\beta.$$
In particular, Fact~\ref{fact2} yields the following conclusions:
\begin{itemize}
\item[(a)] from $\lambda(\beta^*,\beta)<\alpha<\beta^*<\beta$, we have $\tr(\alpha,\beta)=\tr(\beta^*,\beta){}^\smallfrown\tr(\alpha,\beta^*)$;
\item[(b)] from $\lambda(\delta_j,\beta^*)<\alpha<\delta_j<\beta^*$, we have $\tr(\alpha,\beta^*)=\tr(\delta_j,\beta^*){}^\smallfrown\tr(\alpha,\delta_j)$.
\end{itemize}
Altogether,
$$\tr(\alpha,\beta)=\tr(\beta_i,\beta){}^\smallfrown\tr(\beta^*,\beta_i){}^\smallfrown\tr(\delta_j,\beta^*){}^\smallfrown \tr(\alpha,\delta_j).$$
 
For each $i<\mu$, set $u_i:=\{ \tr_h(\beta^*,\beta)\mid \beta\in b_i\}$.
As $\beta_i\in \Gamma_i$, Clause~(1) above implies that $i\in\im(\varrho)$ for all $\varrho\in u_i$.
For each $j<\mu$, set $v_j:=\{ \tr_h(\alpha,\beta^*)\mid\alpha\in a_j\}$
and $\sigma_j:=\tr_h(\delta_j,\beta^*)$.
As $\delta_j\in H_j$, we infer that  $\sigma_j{}^\smallfrown\langle j\rangle\sqsubseteq\sigma$ for all $\sigma\in v_j$.

Next, by the choice of $d_1$, fix $(i,j)\in[\mu]^2$ 
such that $d_1(\varrho{}^\smallfrown\sigma)=(\ell(\varrho),\tau^*,\xi,\phi)$ for all $\varrho\in u_i$ and $\sigma\in v_j$.
Set $a:=a_j$ and $b:=b_i$, so that $(a,b)\in[\mathcal A]^2$.

To see that $\mathbf t[a\times b]=\{(\tau^*,\alpha^*,\beta^*)\}$, fix arbitrary $\alpha\in a$ and $\beta\in b$.
Denote $\varrho:=\tr_h(\beta^*,\beta)$ and $\sigma:=\tr_h(\alpha,\beta^*)$, so that $\varrho\in u_i$ and $\sigma\in v_j$.
Then $d_1(\tr_h(\alpha,\beta))=(\ell(\varrho),\tau^*,\xi,\phi)$, so that
\begin{itemize}
\item $\Tr(\alpha,\beta)(\ell(\varrho))=\Tr(\alpha,\beta)(\rho_2(\beta^*,\beta))=\beta^*$;
\item $\eta=\psi_{\beta^*}(\phi)$ and $\eta+1<\alpha$;
\item $\tau^*<\alpha^*$.
\end{itemize}

Now, since $\lambda(\alpha^*,\alpha)=\eta<\eta+1<\alpha^*<\alpha$,
$\tr(\eta+1,\alpha)=\tr(\alpha^*,\alpha){}^\smallfrown\tr(\eta+1,\alpha^*)$.
So, since $\rho_2(\alpha^*,\alpha)=\xi^{\alpha^*,\alpha}$,
 $\rho_2(\alpha^*,\alpha)=\xi^{\eta+1,\alpha}$ and 
$\alpha^*=\Tr(\eta+1,\alpha)(\xi^{\eta+1,\alpha})$.
\end{proof}

\section{Clause~(1) of Theorem~C}\label{Squaresection}
In this section, we suppose that $\square(\kappa)$ holds.
Fix an arbitrary $\chi\in\reg(\kappa)$ with $\chi^{+}<\kappa$.
We shall construct a witness to $\pl_1(\kappa,\kappa,\chi)$.
Denote $\mu:=\chi^+$.

\begin{lemma}\label{squarelemma} There exists a $C$-sequence $\vec C=\langle C_\alpha\mid\alpha<\kappa\rangle$ satisfying the following:
\begin{enumerate}
\item $C_{\alpha+1}=\{0,\alpha\}$ for every $\alpha<\kappa$;
\item for every club $D\s\kappa$, there exists $\delta\in E^\kappa_{\neq\mu}$ with $\sup(\nacc(C_\delta)\cap D)=\delta$;
\item for every $\alpha\in\acc(\kappa)$ and $\bar\alpha\in\acc(C_\alpha)$, $C_{\bar\alpha}=C_\alpha\cap\bar\alpha$;
\item for every $\gamma<\kappa$, $\{ \delta\in E^\kappa_\chi\mid \min(C_\delta)=\gamma\}$ is stationary.
\end{enumerate}
\end{lemma}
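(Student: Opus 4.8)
The plan is to start from a $\square(\kappa)$-sequence, dispatch Clauses~(1) and~(3) by routine normalizations, and then work to impose Clauses~(2) and~(4). Given a $\square(\kappa)$-sequence, prepending $0$ to every $C_\alpha$ and stipulating $C_{\alpha+1}:=\{0,\alpha\}$ preserves both coherence and non-triviality, so we obtain a coherent, non-trivial $C$-sequence $\vec C=\langle C_\alpha\mid\alpha<\kappa\rangle$ for which Clause~(1) is built in and Clause~(3) is precisely coherence. I will use two standard consequences of non-triviality. First, $\vec C$ may be refined so that for every club $D\s\kappa$ the set $\{\delta<\kappa\mid \sup(\nacc(C_\delta)\cap D)=\delta\}$ is stationary; this is the usual ``a non-trivial coherent sequence carries club guessing'' argument (cf.~\cite{paper18,TodWalks}). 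Second, the partial threads of $\vec C$ are bounded below $\kappa$ (an unbounded one would trivialize $\vec C$), hence the \emph{coherence components} of $\vec C$ --- the classes of the equivalence relation on $\acc(\kappa)$ generated by $\bar\alpha\sim\alpha$ whenever $\bar\alpha\in\acc(C_\alpha)$ --- are each bounded, and there are $\kappa$ of them.

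For Clause~(2), the point is to route the guessing to a cofinality $\ne\mu$. As $\chi^+<\kappa$, the set $\reg(\kappa)\setminus\{\mu\}$ is nonempty; fixing $\theta$ in it (say $\theta:=\chi$, since $\chi<\mu$) I would refine the trivialization argument so that for every club $D$ some guesser lies in $E^\kappa_\theta\s E^\kappa_{\neq\mu}$ --- threading Shelah's club-guessing theorem at $E^\kappa_\theta$ (as invoked in Section~\ref{mainthm5}, cf.~\cite[\S2]{Sh:365}) through the argument, or equivalently first sparsifying $C_\delta$ for $\delta\in E^\kappa_\mu$ (and everything coherence-linked to it) so as to kill guessing there while preserving it elsewhere.

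For Clause~(4), fix a partition $\langle S_\gamma\mid\gamma<\kappa\rangle$ of a stationary subset of $E^\kappa_\chi$ into stationary sets, arranged (after a further normalization) so that $\bigcup_\gamma S_\gamma$ consists of ``free'' ordinals --- not accumulation points of any $C_\alpha$ --- and so that $\delta,\delta'\in\bigcup_\gamma S_\gamma$ with $\acc(C_\delta)\cap\acc(C_{\delta'})\neq\emptyset$ always lie in the same $S_\gamma$ (the overlap classes being bounded by non-triviality, there are enough of them to realize this while keeping each $S_\gamma$ stationary). Then define a component-constant map $g\colon\acc(\kappa)\to\kappa$ with $g\restriction S_\gamma\equiv\gamma$, $g(\alpha+1):=0$, and $g(\alpha)<\min(\acc(C_\alpha)\cup\{\alpha\})$ throughout, and replace each limit-level $C_\alpha$ by $(C_\alpha\setminus(g(\alpha)+1))\cup\{g(\alpha)\}$. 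A direct check shows that coherence and the successor levels are preserved, and --- since only a bounded-below-$\alpha$ initial segment of each $C_\alpha$ is touched --- the club-guessing feature of Clause~(2) survives; and $\min C_\delta=\gamma$ now holds for all $\delta\in S_\gamma$, giving Clause~(4).

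The main obstacle is the global bookkeeping that makes all of these normalizations coexist. The constraint $g(\alpha)<\min(\acc(C_\alpha)\cup\{\alpha\})$ means that realizing \emph{every} value $\gamma$ on a stationary set requires first arranging that for each $\gamma$ there are stationarily many $\delta\in E^\kappa_\chi$ whose club $C_\delta$ meets $\gamma$ in a finite set, and this must be done without spoiling non-triviality or the cofinality-$\neq\mu$ club guessing. The tightest case is $\reg(\kappa)\setminus\{\mu\}=\{\chi\}$ (e.g.\ $\kappa=\chi^{++}$ with $\chi=\omega$), in which the single cofinality $\chi$ must host both the Clause~(2) guessers and the Clause~(4) witnesses; separating these two stationary families inside $E^\kappa_\chi$, using that the requisite freedom is available exactly on its free part, is where the construction has to be organized with real care.
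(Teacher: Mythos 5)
Your plan diverges from the paper's proof, which is essentially a composition of two black boxes: it invokes \cite[Proposition~3.5]{paper24} with $S:=E^\kappa_{\neq\mu}$ to get a $\square(\kappa)$-sequence satisfying Clauses~(2) and~(3), then feeds $\Gamma:=E^\kappa_\chi$ and that sequence into the proof of \cite[Proposition~3.2]{paper18} to impose Clauses~(1) and~(4), finishing by observing that the second modification only alters each $C_\alpha$ on a bounded initial segment and hence preserves the guessing feature. You are instead trying to reconstruct these normalizations from scratch, which is a legitimately different (and much longer) route, but your outline has a gap at exactly the place where those cited propositions do the heavy lifting.

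The gap is the assertion that ``the coherence components of $\vec C$ \ldots are each bounded'' because ``an unbounded [partial thread] would trivialize $\vec C$.'' Non-triviality of a $\square(\kappa)$-sequence says there is no club $D$ with $D\cap\alpha=C_\alpha$ for every $\alpha\in\acc(D)$; equivalently, every \emph{chain} in the end-extension tree on $\{C_\alpha\mid\alpha<\kappa\}$ is bounded. But the equivalence classes you define are connected \emph{components} of that tree, i.e.\ unions of arbitrarily many pairwise-overlapping chains, and a union of bounded chains can well be unbounded: nothing prevents a single $\bar\alpha$ from lying in $\acc(C_\beta)$ for unboundedly many $\beta$, in which case $\bar\alpha$'s component is cofinal in $\kappa$. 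Your Clause~(4) construction needs a component-constant $g$ with $g(\alpha)<\min(\acc(C_\alpha)\cup\{\alpha\})$ throughout each component precisely because, as you can check, shaving $C_\alpha$ down to $(C_\alpha\setminus(g(\alpha)+1))\cup\{g(\alpha)\}$ at a non-free $\bar\alpha\in\acc(C_\alpha)$ forces $g(\bar\alpha)=g(\alpha)$ for coherence to survive; if a component is cofinal, the component-constant value is pinned below the infimum of an unbounded family of constraints and you cannot realize every $\gamma<\kappa$ as required by Clause~(4). So either you need to prove a boundedness normalization lemma first (itself a nontrivial piece of the argument that \cite[Proposition~3.2]{paper18} packages), or you need a different mechanism for Clause~(4) that does not reduce to a component-constant shave. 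You do flag that ``the global bookkeeping'' is the main obstacle, but the specific claim you lean on to control it does not follow from non-triviality.

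Two smaller remarks. First, the Clause~(2) sketch --- ``threading Shelah's club guessing at $E^\kappa_\chi$ through the argument, or equivalently sparsifying $C_\delta$ for $\delta\in E^\kappa_\mu$'' --- is plausible but needs care, since a naive Shelah-style guessing sequence on $E^\kappa_\chi$ is not coherent; the whole point of \cite[Proposition~3.5]{paper24} is to produce a guessing $C$-sequence that is \emph{simultaneously} a $\square(\kappa)$-sequence, and that is a genuine intertwining, not a retrofit. Second, prepending $0$ to every $C_\alpha$ to enforce Clause~(1) forces $\min(C_\delta)=0$ for every limit $\delta$, and then all of Clause~(4) rests on the later shave; this is fine as a plan, but it means Clause~(1) is not really ``dispatched'' at the outset --- it has to be re-verified after the shave (which only touches limit levels, so it does hold, but the bookkeeping is doing more than the word ``routine'' suggests).
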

\begin{proof} As $\square(\kappa)$ holds, we may appeal to \cite[Proposition~3.5]{paper24} with $S:=E^\kappa_{\neq\mu}$,
and obtain a $C$-sequence $\vec C$ satisfying Clauses (2) and (3). In particular, $\vec C$ is a $\square(\kappa)$-sequence.
Now, by feeding $\Gamma:=E^\kappa_\chi$ and $\vec C$ to the proof of \cite[Proposition~3.2]{paper18},
we obtain a $C$-sequence $\langle \bar C_\alpha\mid\alpha<\kappa\rangle$ satisfying Clauses (1), (3) and (4).
An inspection of the said proof makes clear that $\sup(\bar C_\alpha\symdiff C_\alpha)<\alpha$
for every $\alpha\in\acc(\kappa)$, so that Clause~(2) is valid for $\langle \bar C_\alpha\mid\alpha<\kappa\rangle$, as well.
\end{proof}
Let $\vec C$ be given by the preceding lemma.
Recalling Subsection~\ref{subsectionwalks}, we now let $\Tr,\tr,\lambda$ and $\rho_2$ be the characteristic functions of walking along $\vec C$,
and let $\eta_{\alpha,\beta}$ be the notation established in Definition~\ref{etanotation}.

Fix a bijection $\pi:\kappa\leftrightarrow\kappa\times\kappa$.
Define a function $g:\kappa\rightarrow\kappa\times\kappa$ via $g(\alpha):=\pi(\min(C_\alpha))$.
Define a function $h:\kappa\rightarrow\mu$ by letting $h(\alpha):=\min(C_\alpha)$ for all $\alpha<\kappa$ with $\min(C_\alpha)<\mu$,
and $h(\alpha):=0$, otherwise.
Then, define a function $\tr_h:[\kappa]^2\rightarrow{}^{<\omega}\mu$ via $\tr_h(\alpha,\beta):=h\circ\tr(\alpha,\beta)$.
Also, for each $(\eta,\tau)\in\kappa\times\kappa$,
denote $G_{\eta,\tau}:=\{\delta<\kappa\mid g(\delta)=(\eta,\tau)\}$,
and for each $i<\mu$, denote $H_i:=h^{-1}\{i\}$.

\begin{lemma}\label{squarelast} For every $(\delta,\beta)\in[\kappa]^2$, $C_\delta=C_{\last{\delta}{\beta}}\cap\delta$.
In particular:
\begin{itemize}
\item $h(\delta)=h(\last{\delta}{\beta})$;
\item for every $\epsilon<\delta$, $\lambda(\epsilon,\delta)=\lambda(\epsilon,\last{\delta}{\beta})$.
\end{itemize}
\end{lemma}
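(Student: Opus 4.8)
The statement is $C_\delta = C_{\eth_{\delta,\beta}} \cap \delta$, and the plan is to read off everything from Lemma~\ref{last} together with the coherence property of $\vec C$, namely Clause~(3) of Lemma~\ref{squarelemma}. First I would dispose of the trivial case $\eth_{\delta,\beta}=\delta$, where the identity is immediate (as $C_\delta\subseteq\delta$). So assume $\eth_{\delta,\beta}\neq\delta$. By Lemma~\ref{last}(2), $\delta\in\acc(C_{\eth_{\delta,\beta}})$; in particular $\eth_{\delta,\beta}\in\acc(\kappa)$ and $\delta\in\acc(C_{\eth_{\delta,\beta}})$, so Clause~(3) of Lemma~\ref{squarelemma}, applied with $\alpha:=\eth_{\delta,\beta}$ and $\bar\alpha:=\delta$, yields precisely $C_\delta = C_{\eth_{\delta,\beta}}\cap\delta$, as desired.

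For the two bulleted consequences: since $C_\delta = C_{\eth_{\delta,\beta}}\cap\delta$ and $\delta\in\acc(C_{\eth_{\delta,\beta}})$, the two clubs have the same minimum (as $\min(C_{\eth_{\delta,\beta}})<\delta$), so $\min(C_\delta)=\min(C_{\eth_{\delta,\beta}})$, and since $h$ is defined from $\alpha\mapsto\min(C_\alpha)$, we get $h(\delta)=h(\eth_{\delta,\beta})$. For the second bullet, fix $\epsilon<\delta$. Here I would invoke Lemma~\ref{last}(1), which tells us $\lambda(\eth_{\delta,\beta},\beta)<\delta$; since $\epsilon<\delta$ we need a bit more care, but in fact the walk from $\beta$ down to $\epsilon$ that passes through $\eth_{\delta,\beta}$ and then through $\delta$ is controlled: by Lemma~\ref{last}(3), $\tr(\eth_{\delta,\beta},\beta)\sqsubseteq\tr(\delta,\beta)$, and combining with the standard concatenation facts (Fact~\ref{fact2}) the walk from $\delta$ to $\epsilon$ agrees with that from $\eth_{\delta,\beta}$ to $\epsilon$ below $\delta$. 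More directly, since $\tr(\delta,\beta)$ and $\tr(\eth_{\delta,\beta},\beta)$ start the same and the first step past $\eth_{\delta,\beta}$ in the walk to $\epsilon$ lands inside $C_{\eth_{\delta,\beta}}$, and $C_\delta=C_{\eth_{\delta,\beta}}\cap\delta$ means the recursion defining $\Tr(\epsilon,\delta)$ and $\Tr(\epsilon,\eth_{\delta,\beta})$ agree until reaching $\epsilon$; hence the max defining $\lambda$ is the same, giving $\lambda(\epsilon,\delta)=\lambda(\epsilon,\eth_{\delta,\beta})$.

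The cleanest route to the second bullet, and the one I would actually write, is: by Lemma~\ref{last} applied to the pair $(\epsilon',\beta)$ where I arrange the relevant trace-prefix relationships, or simply directly: the function $\Tr(\cdot,\delta)$ only uses the single set $C_\delta$ at its first step and thereafter walks inside ordinals below $\min(C_\delta)$, none of which see $\eth_{\delta,\beta}$; since $C_\delta=C_{\eth_{\delta,\beta}}\cap\delta$, replacing the ``root'' $\delta$ by $\eth_{\delta,\beta}$ changes the walk to $\epsilon$ only by prepending the single ordinal $\eth_{\delta,\beta}$, whose contribution $\sup(C_{\eth_{\delta,\beta}}\cap\epsilon)=\sup(C_\delta\cap\epsilon)$ is already counted. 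Hence $\lambda(\epsilon,\delta)=\lambda(\epsilon,\eth_{\delta,\beta})$ by the formula for $\lambda$.

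**Main obstacle.** The genuinely nontrivial point is the first one — establishing $C_\delta=C_{\eth_{\delta,\beta}}\cap\delta$ — but this is exactly what Lemma~\ref{last}(2) plus the coherence Clause~(3) of Lemma~\ref{squarelemma} deliver, so it is short. The bulleted consequences are bookkeeping about how $h$ and $\lambda$ depend only on $\langle C_\alpha\rangle$ restricted below $\delta$; the only mild subtlety is making sure the walk $\Tr(\epsilon,\eth_{\delta,\beta})$ really does factor through $\delta$ when $\epsilon<\delta$, which follows since $\delta\in\acc(C_{\eth_{\delta,\beta}})$ forces $\min(C_{\eth_{\delta,\beta}}\setminus\epsilon)\le\delta$ and hence the second step of the walk lands at or below $\delta$, reducing to the walk rooted at a point $\le\delta$ whose $C$-set is a tail-restriction of $C_\delta$ by coherence.
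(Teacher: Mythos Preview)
Your proof is correct and matches the paper's one-line argument: in the nontrivial case Lemma~\ref{last}(2) gives $\delta\in\acc(C_{\eth_{\delta,\beta}})$, and then coherence (Lemma~\ref{squarelemma}(3)) yields $C_\delta=C_{\eth_{\delta,\beta}}\cap\delta$, from which both bullets are immediate. One small expository slip in your discussion of the second bullet: the walk $\Tr(\epsilon,\eth_{\delta,\beta})$ does not \emph{prepend} $\eth_{\delta,\beta}$ to $\Tr(\epsilon,\delta)$ but rather \emph{replaces} $\delta$ by $\eth_{\delta,\beta}$ at step~$0$ and agrees from step~$1$ onward (since $\min(C_{\eth_{\delta,\beta}}\setminus\epsilon)=\min(C_\delta\setminus\epsilon)<\delta$), and the intermediate ordinals lie in $[\epsilon,\delta)$, not below $\min(C_\delta)$.
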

\begin{proof} By Lemma~\ref{squarelemma}(3) together with Lemma~\ref{last}(2).
\end{proof}

Exactly as in Subsection~\ref{ssectioncase1},
we appeal to Lemma~\ref{lemma42} to fix a map $d_0:{}^{<\omega}\mu\rightarrow\omega$, its corresponding $\mu$-additive proper ideal $J$,
define a coloring $c:[\kappa]^2\rightarrow\kappa\times\kappa$ via 
$$c(\alpha,\beta):=g(\Tr(\alpha,\beta)(d_0(\tr_h(\alpha,\beta)))),$$
and define the sets $S_\eta$ and the transformation $\mathbf{t}$ in the very same way.

\begin{defn}  For $\eta<\kappa$, $S_{\eta}$ denotes the set of all $\epsilon<\kappa$ 
with the property that, for every $\varsigma<\kappa$, there exist $I\in J^+$ 
and a sequence $\langle \beta_i\mid i\in I\rangle\in\prod_{i\in I}H_i\setminus\varsigma$, such that,
for all $i\in I$ and $\beta\in x_{\beta_i}$:
\begin{enumerate}
\item[(i)]  $i\in\im(\tr_h(\epsilon,\beta))$;
\item[(ii)] $\lambda(\epsilon,\beta)=\eta$;
\item[(iii)]  $\rho_2(\epsilon,\beta)=\eta_{\epsilon,\beta}$.
\end{enumerate}
\end{defn}
\begin{defn}
Define $\mathbf t:[\kappa]^2\rightarrow[\kappa]^3$
by letting, for all $(\alpha,\beta)\in[\kappa]^2$, $\mathbf t(\alpha,\beta):=(\tau,\alpha^*,\beta^*)$ provided that the following conditions are met:
\begin{itemize}
\item $(\eta,\tau):=c(\alpha,\beta)$ and $\max\{\eta+1,\tau\}<\alpha$,
\item $\beta^*=\Tr(\alpha,\beta)(\eta_{\alpha,\beta})$ is $>\alpha$, and
\item $\alpha^*=\Tr(\eta+1,\alpha)(\eta_{\eta+1,\alpha})$.
\end{itemize}
Otherwise, let $\mathbf t(\alpha,\beta):=(0,\alpha,\beta)$.
\end{defn}

To verify that $\mathbf t$ witnesses $\pl_1(\kappa,\kappa,\chi)$,
suppose that we are given a family $\mathcal A\s[\kappa]^{<\chi}$ consisting of $\kappa$ many pairwise disjoint sets.
Fix a sequence  $\vec x=\langle x_\delta\mid \delta<\kappa\rangle$ such that,
for all $\delta<\kappa$, $x_\delta\in\mathcal A$ with $\min(x_\delta)>\delta$.

\begin{lemma}  There exists $\eta<\kappa$ for which $S_\eta$ is stationary.
\end{lemma}
\begin{proof} It suffices to prove that, for every club $D\s\kappa$,
there exist $\epsilon\in D$ and $\eta<\epsilon$ for which $\epsilon\in S_\eta$.
Thus, let $D$ be an arbitrary club in $\kappa$.

Define a function $f:E^\kappa_\chi\rightarrow\kappa$ via
$$f(\delta):=\sup\{\lambda(\last{\delta}{\beta},\beta)\mid \beta\in x_\delta\}.$$

As $|x_\delta|<\chi=\cf(\delta)$, Lemma~\ref{last}(1) entails that $f$ is regressive.
So, for all $i<\mu$, let us pick a stationary subset $\bar H_i\subseteq H_i$ such that $f\restriction \bar H_i$ is constant.
Set $\zeta:=\sup(f[\bigcup_{i<\mu}\bar H_i])$.
Now, by Lemma~\ref{squarelemma}(2), let us pick a nonzero $\gamma\in E^\kappa_{\neq\mu}$  with $\sup(\nacc(C_\gamma)\cap (D\setminus\zeta))=\gamma$.

Let $\varsigma<\kappa$. Fix a sequence $\langle \beta^\varsigma_i\mid i<\mu\rangle\in\prod_{i<\mu}\bar H_i\setminus\max\{\gamma+1,\varsigma\}$.
For every $i<\mu$, let
$$\zeta^\varsigma_i:=\begin{cases}
0,&\text{if }\gamma\in\acc(C_{\beta^\varsigma_i});\\
\sup(C_{\beta^\varsigma_i}\cap\gamma),&\text{if }\gamma\in\nacc(C_{\beta^\varsigma_i});\\
\lambda(\last{\gamma}{\beta_i^\varsigma},\beta_i^\varsigma),&\text{otherwise}.
\end{cases}$$
Note that, by Lemma~\ref{last}(1), $\zeta^\varsigma_i<\gamma$.

As $\cf(\gamma)\neq\mu$ and as $J$ is a $\mu$-additive proper ideal on $\mu$, we may now fix $I^\varsigma\in J^+$ along with some ordinal $\xi^\varsigma<\gamma$ such that $\max\{\zeta,\zeta_i^\varsigma\}\le\xi^\varsigma$ for all $i\in I^\varsigma$.
Then, pick a large enough $\epsilon^\varsigma\in\nacc(C_\gamma)\cap D$ such that $\sup(C_\gamma\cap\epsilon^\varsigma)>\xi^\varsigma$.

Fix $\epsilon\in\nacc(C_\gamma)\cap D$ for which $\Sigma:=\{\varsigma<\kappa\mid \epsilon^\varsigma=\epsilon\}$
is cofinal in $\kappa$. Denote $\eta:=\sup(C_\gamma\cap\epsilon)$, so that $\eta<\epsilon$.
We have $\epsilon\in D$. To see that $\epsilon\in S_\eta$, let $\varsigma<\kappa$ be arbitrary. 
By increasing $\varsigma$, we may assume that $\varsigma\in\Sigma$.
Let $i\in I^\varsigma$ and $\beta\in x_{\beta^\varsigma_i}$ be arbitrary. We must show that:
\begin{enumerate}
\item[(i)] $i\in\im(\tr_h(\epsilon,\beta))$;
\item[(ii)] $\lambda(\epsilon,\beta)=\eta$;
\item[(iii)]  $\rho_2(\epsilon,\beta)=\eta_{\epsilon,\beta}$.
\end{enumerate}

We have:
$$\lambda(\last{\beta_i^\varsigma}{\beta},\beta)\le f(\beta_i^\varsigma)\le\zeta\le\xi^\varsigma<\eta<\epsilon<\gamma<\beta_i^\varsigma<\beta.$$
It thus follows from Fact~\ref{fact2} that $\tr(\epsilon,\beta)=\tr(\last{\beta_i^\varsigma}{\beta},\beta){}^\smallfrown\tr(\epsilon,\last{\beta_i^\varsigma}{\beta})$.
So, since $\beta_i^\varsigma\in H_i$, Lemma~\ref{squarelast} implies that $i\in\im(\tr_h(\epsilon,\beta))$. 

\begin{claim} $\lambda(\epsilon,\beta)=\eta$ and $\rho_2(\epsilon,\beta)=\eta_{\epsilon,\beta}$.
\end{claim}
\begin{proof} 
By Lemma~\ref{fact3}, $\lambda(\epsilon,\beta)=\max\{\lambda(\last{\beta_i^\varsigma}{\beta},\beta),\lambda(\epsilon,\last{\beta_i^\varsigma}{\beta})\}$.
Now, there are three cases to consider:

$\br$ If $\gamma\in\acc(C_{\beta_i^\varsigma})$, then $C_{\beta_i^\varsigma}\cap\gamma=C_\gamma$,
and since $\epsilon\in C_\gamma$,
$\tr(\epsilon,\beta)=\tr(\last{\beta_i^\varsigma}{\beta},\beta){}^\smallfrown\langle \last{\beta_i^\varsigma}{\beta}\rangle$,
and $\lambda(\epsilon,\last{\beta_i^\varsigma}{\beta})=\sup(C_\gamma\cap\epsilon)=\eta>\zeta\ge\lambda(\last{\beta_i^\varsigma}{\beta},\beta)$,
so the conclusion follows.

$\br$ If $\gamma\in\nacc(C_{\beta_i^\varsigma})$, then since $\epsilon\in C_\gamma$,
$\tr(\epsilon,\beta)=\tr(\last{\beta_i^\varsigma}{\beta},\beta){}^\smallfrown\langle \last{\beta_i^\varsigma}{\beta},\gamma\rangle$,
so that $\lambda(\epsilon,\beta)=\max\{\lambda(\last{\beta_i^\varsigma}{\beta},\beta),\sup(C_{\last{\beta_i^\varsigma}{\beta}}\cap\epsilon),\sup(C_\gamma\cap\epsilon)\}=
\max\{\lambda(\last{\beta_i^\varsigma}{\beta},\beta),\zeta^\varsigma_i,\eta\}$,
and the conclusion follows.

$\br$ Otherwise, $\last{\gamma}{\beta_i^\varsigma}\neq\beta_i^\varsigma$.
Then $\lambda(\last{\gamma}{\beta_i^\varsigma},\beta_i^\varsigma)=\zeta^\varsigma_i\le\xi^\varsigma<\epsilon<\gamma\le\last{\gamma}{\beta_i^\varsigma}<\beta^\varsigma_i$,
and so, by Fact~\ref{fact2},
$\tr(\epsilon,\beta_i^\varsigma)=\tr(\last{\gamma}{\beta_i^\varsigma},\beta_i^\varsigma){}^\smallfrown\tr(\epsilon,\last{\gamma}{\beta_i^\varsigma})$.
Thus,  by Lemma~\ref{fact3},
$$\lambda(\epsilon,\beta_i^\varsigma)=\max\{\lambda(\last{\gamma}{\beta_i^\varsigma},\beta_i^\varsigma),\lambda(\epsilon,\last{\gamma}{\beta_i^\varsigma})\}=\max\{\zeta_i^\varsigma,\lambda(\epsilon,\last{\gamma}{\beta_i^\varsigma})\}.$$

By Lemma~\ref{squarelast}, $\lambda(\epsilon,\last{\beta_i^\varsigma}{\beta})=\lambda(\epsilon,\beta_i^\varsigma)$.
As $\epsilon\in C_\gamma=C_{\last{\gamma}{\beta_i^\varsigma}}\cap\gamma$, we get that $\lambda(\epsilon,\last{\gamma}{\beta_i^\varsigma})=\sup(C_\gamma\cap\epsilon)=\eta$.
Altogether, $\lambda(\epsilon,\beta)=\max\{\lambda(\last{\beta_i^\varsigma}{\beta},\beta),\zeta_i^\varsigma,\eta\}$.
But, $\eta>\xi^\varsigma\ge\max\{\zeta,\zeta_i^\varsigma\}\ge\{\lambda(\last{\beta_i^\varsigma}{\beta},\beta),\zeta_i^\varsigma\}$,
and the conclusion follows.

\end{proof}
This completes the proof.
\end{proof}

Let $\eta$ be given by the preceding lemma.
Let $D$ be a club in $\kappa$ such that, for all $\delta\in D$, there exists $M_\delta\prec\mathcal H_{\kappa^{+}}$ containing 
the parameter $p:=\{S_\eta,\vec x,\vec C,h\}$
and satisfying $M_\delta\cap\kappa=\delta$. 
Consider the club $$E:=\diagonal_{\tau<\kappa}\acc^+(G_{\eta,\tau}\cap\bigcap\nolimits_{j<\mu}\acc^+(H_j\cap D)).$$
Finally, let $S^*:=\{\epsilon\in S_{\eta} \mid \sup(E\cap\epsilon\setminus C_\epsilon)=\epsilon\}.$
\begin{lemma} $S^*$ is stationary.
\end{lemma}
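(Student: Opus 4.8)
The plan is to first observe that the precise definition of the club $E$ is irrelevant here: I will show that for an arbitrary club $E\s\kappa$, the set $T:=\{\epsilon<\kappa\mid\sup(E\cap\epsilon\setminus C_\epsilon)<\epsilon\}$ is nonstationary. Since $\sup(E\cap\epsilon\setminus C_\epsilon)\le\epsilon$ always holds, the complement of $T$ is exactly $\{\epsilon<\kappa\mid\sup(E\cap\epsilon\setminus C_\epsilon)=\epsilon\}$, so that $S^*=S_\eta\cap(\kappa\setminus T)$; as $S_\eta$ is stationary by the preceding lemma, the nonstationarity of $T$ will immediately give that $S^*$ is stationary.

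To see that $T$ is nonstationary, suppose otherwise and aim for a contradiction by producing a thread through $\vec C$. Since $\acc(E)$ is a club, $T\cap\acc(E)$ is stationary, and the map $\epsilon\mapsto\sup(E\cap\epsilon\setminus C_\epsilon)$ is regressive on it (each such $\epsilon$ is a nonzero limit ordinal lying in $T$). Fodor's lemma then furnishes a stationary $T'\s T\cap\acc(E)$ and an ordinal $\gamma^*$ with $\sup(E\cap\epsilon\setminus C_\epsilon)=\gamma^*$, i.e.\ $E\cap\epsilon\setminus(\gamma^*+1)\s C_\epsilon$, for every $\epsilon\in T'$. Put $T'':=T'\setminus(\gamma^*+1)$. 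The key claim is that $\langle C_\epsilon\mid\epsilon\in T''\rangle$ is perfectly coherent: given $\epsilon_0<\epsilon_1$ in $T''$, we have $\epsilon_0\in\acc(E)\s E$ and $\epsilon_0>\gamma^*$, so $\epsilon_0\in E\cap\epsilon_1\setminus(\gamma^*+1)\s C_{\epsilon_1}$; and since $E\cap\epsilon_0\setminus(\gamma^*+1)$ is cofinal in $\epsilon_0$ and contained in $C_{\epsilon_1}$, the set $C_{\epsilon_1}\cap\epsilon_0$ is cofinal in $\epsilon_0$ too; hence $\epsilon_0\in\acc(C_{\epsilon_1})$, and Lemma~\ref{squarelemma}(3) gives $C_{\epsilon_0}=C_{\epsilon_1}\cap\epsilon_0$.

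Finally, I would set $D:=\bigcup_{\epsilon\in T''}C_\epsilon$ and run the standard verification: using that $T''$ is cofinal in $\kappa$ and that $C_{\epsilon_0}=C_{\epsilon_1}\cap\epsilon_0$ whenever $\epsilon_0<\epsilon_1$ are in $T''$, one shows that $D$ is a club in $\kappa$ and that, for every $\alpha\in\acc(D)$ and every $\epsilon\in T''$ above $\alpha$, one has $D\cap\alpha=C_\epsilon\cap\alpha$ and $\alpha\in C_\epsilon$, whence $\alpha\in\acc(C_\epsilon)$ and $C_\alpha=C_\epsilon\cap\alpha=D\cap\alpha$ by Lemma~\ref{squarelemma}(3). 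Thus $D$ threads $\vec C$, which is impossible since $\vec C$ is a $\square(\kappa)$-sequence --- and, for the refinement $\langle\bar C_\alpha\rangle$ actually produced in the proof of Lemma~\ref{squarelemma}, since a thread for it would, as $\sup(\bar C_\alpha\symdiff C_\alpha)<\alpha$, be an almost-thread, hence a thread after thinning, for the underlying $\square(\kappa)$-sequence. I expect the only real work to be in this last paragraph --- checking that $D$ is closed and identifying $D\cap\alpha$ with $C_\epsilon\cap\alpha$ on $\acc(D)$ --- while the reduction to $T$ and the coherence computation are short.
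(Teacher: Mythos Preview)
Your proof is correct and follows essentially the same approach as the paper: both reduce to showing that $T=\{\epsilon<\kappa\mid\sup(E\cap\epsilon\setminus C_\epsilon)<\epsilon\}$ is nonstationary, using that a coherent $C$-sequence with no thread (a $\square(\kappa)$-sequence) is amenable. The only difference is presentational: the paper dispatches this in one line by citing \cite[Lemma~1.23]{paper29}, whereas you unpack the argument directly via Fodor and the standard thread construction---your version is self-contained, and your care in justifying that the refined sequence from Lemma~\ref{squarelemma} is still thread-free (via the $\sup(\bar C_\alpha\symdiff C_\alpha)<\alpha$ clause) is a detail the paper leaves implicit.
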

\begin{proof} As $\vec C$ is a $\square(\kappa)$-sequence, \cite[Lemma~1.23]{paper29} implies that $\vec C$ is \emph{amenable}
in the sense of \cite[Definition~1.3]{paper29}, so that
$\{\epsilon\in\kappa\mid \sup(E\cap\epsilon\setminus C_\epsilon)<\epsilon\}$ is nonstationary.
\end{proof}
\begin{lemma}
Let $(\tau^*,\alpha^*,\beta^*)\in\kappa\circledast S^*\circledast S^*$. There exists $(a,b)\in[\mathcal A]^2$ such that $\mathbf t[a\times b]=\{(\tau^*,\alpha^*,\beta^*)\}$.
\end{lemma}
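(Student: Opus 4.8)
The plan is to transcribe the proof of Lemma~\ref{wrappingup} from Subsection~\ref{ssectioncase1}, with $\eta$, the club $D$, the elementary submodels $M_\delta$, the club $E$, and $S^*$ now being the ones built above. The single structural difference is this: in Lemma~\ref{wrappingup}, membership in the nonreflecting stationary set $\Gamma$ was used — through Fact~\ref{fact1} — to keep the walk parameters $\lambda(\cdot,\cdot)$ strictly below the ordinals at hand; here there is no $\Gamma$, and this service is rendered instead by the operator $\last{\cdot}{\cdot}$ together with the coherence of $\vec C$. The bridge is Lemma~\ref{squarelast}: passing from a point $\delta$ to $\last{\delta}{\beta}$ preserves its $g$-value, its $h$-value, and the $\lambda$-value of any walk into it, while Lemma~\ref{last}(1) forces $\lambda(\last{\delta}{\beta},\beta)<\delta$. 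So, wherever Lemma~\ref{wrappingup} walks ``through $\delta$'', I would walk ``through $\last{\delta}{\beta}$'' — the very move already used in the proof that $S_\eta$ is stationary in this section.

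Concretely, from $\beta^*\in S^*\s S_\eta$ I would first extract $I\in J^+$ and $\langle\beta_i\mid i\in I\rangle\in\prod_{i\in I}H_i\setminus(\beta^*+1)$ witnessing clauses (i)--(iii) of the definition of $S_\eta$ with $\epsilon=\beta^*$. The only step genuinely absent from Subsection~\ref{ssectioncase1} is locating the pivot. Since $\beta^*\in S^*$ we have $\sup(E\cap\beta^*\setminus C_{\beta^*})=\beta^*$, so fix $\varepsilon\in E\cap\beta^*\setminus C_{\beta^*}$ large enough that $\sup(C_{\beta^*}\cap\varepsilon)>\alpha^*$; as $\varepsilon\notin C_{\beta^*}$, also $\sup(C_{\beta^*}\cap\varepsilon)<\varepsilon$. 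Unwinding $\varepsilon\in E=\diagonal_{\tau<\kappa}\acc^+(G_{\eta,\tau}\cap\bigcap_{j<\mu}\acc^+(H_j\cap D))$ at $\tau=\tau^*$, the set $G_{\eta,\tau^*}\cap\bigcap_{j<\mu}\acc^+(H_j\cap D)$ is cofinal in $\varepsilon$, so pick $\bar\varepsilon$ in it with $\sup(C_{\beta^*}\cap\varepsilon)<\bar\varepsilon<\varepsilon$, whence $\bar\varepsilon\notin C_{\beta^*}$. Put $\varepsilon^\circ:=\last{\bar\varepsilon}{\beta^*}$. Then $\bar\varepsilon\notin C_{\beta^*}$ yields $\bar\varepsilon\le\varepsilon^\circ<\beta^*$, and Lemma~\ref{squarelast} together with Lemma~\ref{last}(1) gives $g(\varepsilon^\circ)=g(\bar\varepsilon)=(\eta,\tau^*)$, $h(\varepsilon^\circ)=h(\bar\varepsilon)$, $\lambda(\xi,\varepsilon^\circ)=\lambda(\xi,\bar\varepsilon)$ for $\xi<\bar\varepsilon$, and $\lambda(\varepsilon^\circ,\beta^*)<\bar\varepsilon$. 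This $\varepsilon^\circ$ is the analogue of the pivot $\varepsilon$ in Lemma~\ref{wrappingup}, now carrying the prescribed colour $(\eta,\tau^*)$.

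From here the argument is a line-by-line translation. For each $j<\mu$, using $\bar\varepsilon\in\acc^+(H_j\cap D)$ and the $M_{\delta_j}$, choose $\delta_j\in H_j\cap D\cap\bar\varepsilon$ above $\max\{\alpha^*,\lambda(\varepsilon^\circ,\beta^*)\}$ (replacing $\delta_j$ by $\last{\delta_j}{\bar\varepsilon}$, still with $h$-value $j$ by Lemma~\ref{squarelast} and with the relevant $\lambda$ below $\delta_j$ by Lemma~\ref{last}(1)), then extract from $\alpha^*\in S_\eta$ a seed $\alpha_j\in M_{\delta_j}\cap\bigcup_{i<\mu}H_i$ above $\varsigma_j:=\max\{\alpha^*,\lambda(\varepsilon^\circ,\beta^*),\lambda(\delta_j,\varepsilon^\circ)\}+1$ with $\lambda(\alpha^*,\alpha)=\eta$ and $\rho_2(\alpha^*,\alpha)=\eta_{\alpha^*,\alpha}$ for all $\alpha\in x_{\alpha_j}$. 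Setting $a_j:=x_{\alpha_j}$, $b_i:=x_{\beta_i}$, $u_i:=\{\tr_h(\varepsilon^\circ,\beta)\mid\beta\in b_i\}$, $v_j:=\{\tr_h(\alpha,\varepsilon^\circ)\mid\alpha\in a_j\}$ and $\sigma_j:=\tr_h(\delta_j,\varepsilon^\circ)$, one checks clauses (2)--(4) of Definition~\ref{defpl6} for $\langle(u_i,v_i,\sigma_i)\rangle$ — clause (3) via $i\in\im(\tr_h(\beta^*,\beta))$ and $\tr(\varepsilon^\circ,\beta)=\tr(\beta^*,\beta){}^\smallfrown\tr(\varepsilon^\circ,\beta^*)$, clause (4) via $\delta_j\in H_j$ — and feeds them to the oracle $d_0$ of Lemma~\ref{lemma42} to get $(i,j)\in[I]^2$ with $d_0(\varrho{}^\smallfrown\sigma)=\ell(\varrho)$ for all $\varrho\in u_i$, $\sigma\in v_j$. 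Put $a:=a_j$, $b:=b_i$. For arbitrary $\alpha\in a$, $\beta\in b$, the chain $\eta<\eta+1<\alpha^*<\varsigma_j\le\alpha_j<\alpha<\delta_j<\bar\varepsilon\le\varepsilon^\circ<\beta^*<\beta_i<\beta$ and Fact~\ref{fact2} give $\tr(\alpha,\beta)=\tr(\beta^*,\beta){}^\smallfrown\tr(\varepsilon^\circ,\beta^*){}^\smallfrown\tr(\delta_j,\varepsilon^\circ){}^\smallfrown\tr(\alpha,\delta_j)$, hence $d_0(\tr_h(\alpha,\beta))=\ell(\tr(\varepsilon^\circ,\beta))=\rho_2(\varepsilon^\circ,\beta)$ and $c(\alpha,\beta)=g(\Tr(\alpha,\beta)(\rho_2(\varepsilon^\circ,\beta)))=g(\varepsilon^\circ)=(\eta,\tau^*)$; moreover $\eta_{\alpha,\beta}=\eta_{\beta^*,\beta}=\rho_2(\beta^*,\beta)$ by clause (iii), so $\beta^*=\Tr(\alpha,\beta)(\eta_{\alpha,\beta})>\alpha$; and $\lambda(\alpha^*,\alpha)=\eta<\eta+1<\alpha^*<\alpha$ with $\rho_2(\alpha^*,\alpha)=\eta_{\alpha^*,\alpha}$ gives $\alpha^*=\Tr(\eta+1,\alpha)(\eta_{\eta+1,\alpha})$. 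Thus $\mathbf t(\alpha,\beta)=(\tau^*,\alpha^*,\beta^*)$ for all $(\alpha,\beta)\in a\times b$, which is what we need.

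I expect the main obstacle to be pure bookkeeping: confirming that each use of $\last{\cdot}{\cdot}$ is legitimate — that the replacement point stays strictly below the target of its walk (and that the degenerate case $\last{\delta}{\beta}=\beta$ does no harm, since there $h$ and the needed $\lambda$ are by Lemma~\ref{squarelast} already those of $\beta$) and still carries the correct $g$-, $h$- and small-index $\lambda$-data — and then threading these substitutions coherently through the four-fold walk-decomposition above, exactly as is done in the proof that $S_\eta$ is stationary here. No idea beyond Lemma~\ref{wrappingup}, Lemma~\ref{last} and Lemma~\ref{squarelast} should be needed.
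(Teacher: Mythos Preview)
Your proposal is correct and follows essentially the same route as the paper's proof: pick the pivot in $G_{\eta,\tau^*}\cap\bigcap_{j<\mu}\acc^+(H_j\cap D)$ outside $C_{\beta^*}$, pass to its $\last{\cdot}{\beta^*}$, and then use $\last{\delta_j}{\cdot}$ rather than $\delta_j$ itself in the walk decomposition, invoking Lemma~\ref{squarelast} and Lemma~\ref{last}(1) exactly as you describe. The paper compresses your two-step choice $\varepsilon\in E$, $\bar\varepsilon\in G$ into a single pick $\gamma\in G\cap\beta^*\setminus C_{\beta^*}$ (observing directly that this set is cofinal in $\beta^*$), and is more careful to keep both names $\delta_j$ and $\last{\delta_j}{\gamma}$ separate---the former for $M_{\delta_j}$, the latter for $\varsigma_j$ and the four-fold decomposition---whereas your parenthetical ``replacing $\delta_j$ by $\last{\delta_j}{\bar\varepsilon}$'' blurs which symbol is meant where; but this is bookkeeping, not a gap.
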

\begin{proof} As $\beta^*\in S^*\s S_{\eta}$, let us pick $I\in J^+$ and a sequence $\langle \beta_i\mid i\in I\rangle\in\prod_{i\in I}H_i\setminus(\beta^*+1)$ such that, for all $i\in I$ and $\beta\in x_{\beta_i}$:
\begin{enumerate}
\item $i\in\tr_h(\beta^*,\beta)$;
\item $\lambda(\beta^*,\beta)=\eta$;
\item  $\rho_2(\beta^*,\beta)=\eta_{\beta^*,\beta}$.
\end{enumerate}

Denote $G:=G_{\eta,\tau^*}\cap\bigcap\nolimits_{j<\mu}\acc^+(H_j\cap D)$.
From $\beta^*\in S^*$ and as $C_{\beta^*}$ is closed, it follows that $\sup(G\cap\beta^*\setminus C_{\beta^*})=\beta^*$. 
Thus, we pick a large enough $\gamma\in G\cap \beta^*\setminus C_{\beta^*}$ such that $\sup(C_{\beta^*}\cap\gamma)>\alpha^*$. 
In particular, for $\varepsilon:=\last{\gamma}{\beta^*}$,  $\lambda(\varepsilon,\beta^*)>\alpha^*>\eta$.

For each $j<\mu$, as $\gamma\in G\s \acc^+(H_j\cap D)$,
Lemma~\ref{last}(1) entails that we may pick a large enough $\delta_j\in H_j\cap D\cap\gamma$ such that $\delta_j>\lambda(\varepsilon,\beta^*)$.
As $M_{\delta_j}$ contains $p$, we have that $S_{\eta}\in M_{\delta_j}$.
By Lemma~\ref{last}(1), $\varsigma_j:=\max\{\alpha^*,\lambda(\varepsilon,\beta^*),\lambda(\last{\delta_j}{\gamma},\gamma)\}+1$ is smaller than $\delta_j$.\footnote{By Convention~\ref{conv28},
if $\last{\delta_j}{\gamma}=\gamma$, then $\lambda(\last{\delta_j}{\gamma},\gamma)=0$.}
Since $\alpha^*\in M_{\delta_j}\cap S_{\eta}$, we may then find $\alpha_j\in M_{\delta_j}\cap(\bigcup_{i<\mu}H_i)\setminus\varsigma_j$ such that, for all $\alpha\in x_{\alpha_j}$:
\begin{enumerate}
\item[(2')] $\lambda(\alpha^*,\alpha)=\eta$;
\item[(3')] $\rho_2(\alpha^*,\alpha)=\eta_{\alpha^*,\alpha}$.
\end{enumerate}

Note that from $\alpha_j\in M_{\delta_j}$, it follows that $\sup(x_{\alpha_j})<\delta_j$. 
Write  $a_j:=x_{\alpha_j}$ and $b_i:=x_{\beta_i}$.
Let $(i,j,\alpha,\beta)\in I\times\mu\times a_j\times b_i$ be arbitrary. Then:
$$\eta<\eta+1<\alpha^*<\varsigma_j\le\alpha_j<\alpha<\delta_j<\gamma\le\varepsilon<\beta^*<\beta_i<\beta.$$
In particular, Fact~\ref{fact2} yields the following conclusions:
\begin{itemize}
\item[(a)] from $\lambda(\beta^*,\beta)=\eta<\alpha<\beta^*<\beta$, we have $\tr(\alpha,\beta)=\tr(\beta^*,\beta){}^\smallfrown\tr(\alpha,\beta^*)$;
\item[(b)] from $\lambda(\varepsilon,\beta^*)<\varsigma_j<\alpha<\beta^*$, we have $\tr(\alpha,\beta^*)=\tr(\varepsilon,\beta^*){}^\smallfrown\tr(\alpha,\varepsilon)$;
\item[(c)] from $\lambda(\last{\delta_j}{\gamma},\varepsilon)=\lambda(\last{\delta_j}{\gamma},\gamma)<\varsigma_j<\alpha<\delta_j\le\last{\delta_j}{\gamma}\le\gamma\le\varepsilon$,
we have $\tr(\alpha,\varepsilon)=\tr(\last{\delta_j}{\gamma},\varepsilon){}^\smallfrown \tr(\alpha,\last{\delta_j}{\gamma})$.
\end{itemize}
So that, altogether,
$$\tr(\alpha,\beta)=\tr(\beta^*,\beta){}^\smallfrown\tr(\varepsilon,\beta^*){}^\smallfrown\tr(\last{\delta_j}{\gamma},\varepsilon){}^\smallfrown \tr(\alpha,\last{\delta_j}{\gamma}).$$

In addition, from $\lambda(\alpha^*,\alpha)=\eta<\eta+1<\alpha^*<\alpha$, we infer that 
\begin{itemize}
\item[(d)] $\tr(\eta+1,\alpha)=\tr(\alpha^*,\alpha){}^\smallfrown\tr(\eta+1,\alpha^*)$.
\end{itemize}

For each $i\in I$, denote $u_i:=\{ \tr_h(\varepsilon,\beta)\mid \beta\in b_i\}$.
By Clause~(1) above, for all $\varrho\in u_i$, $i\in\im(\tr_h(\beta^*,\beta))\s\im(\varrho)$.

For each $j<\mu$, denote $v_j:=\{ \tr_h(\alpha,\varepsilon)\mid\alpha\in a_j\}$.
By Clause~(c) above, for all $\sigma\in v_j$, $\tr_h(\last{\delta_j}{\gamma},\varepsilon){}^\smallfrown\langle j\rangle\sqsubseteq\sigma$.

Next, by the choice of $d_0$, fix $(i,j)\in[I]^2$ 
such that $d_0(\varrho{}^\smallfrown\sigma)=\ell(\varrho)$ for all $\varrho\in u_i$ and $\sigma\in v_j$.
Set $a:=x_j$ and $b:=x_i$. The rest of the proof is now identical to that of Lemma~\ref{wrappingup}.
\end{proof}

\section{Clause~(3) of Theorem~C}\label{lastclause}
In this section, we suppose that $\kappa$ is inaccessible, $\chi\in\reg(\kappa)$,
and $E^\kappa_{\ge\chi}$ admits a stationary set that does not reflect at inaccessibles.
Let $\mu:=\chi^+$. We shall prove that $\pl_1(\kappa,\mu,\chi)$ holds. 
Note that by the result of Section~\ref{mainthm5},
we may assume that every stationary subset of $E^\kappa_{\ge\chi}$ reflects.

\begin{lemma}  There exist $\sigma^1,{\sigma^0}\in\reg(\kappa)$ with $\mu<\sigma^1<{\sigma^0}$
and stationary subsets $S^1,S^0$ of $\kappa$ consisting of singular cardinals such that
\begin{itemize}
\item $S^1\s E^\kappa_{\sigma^1}$, and $S^1$ does not reflect at inaccessibles;
\item $S^0\s E^\kappa_{\sigma^0}$, and $S^0$ does not reflect at inaccessibles.
\end{itemize}
\end{lemma}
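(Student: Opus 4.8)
The plan is to extract $S^1,S^0$ by first descending inside the given set and then pumping the cofinality up with the trace operation. Fix a stationary $R_0\s E^\kappa_{\ge\chi}$ with $\Tr(R_0)$ containing no inaccessible, and recall the standing assumption of this section that every stationary subset of $E^\kappa_{\ge\chi}$ reflects. I would begin by recording three elementary facts for ordinals $\delta<\kappa$. (I) For regular $\rho<\delta$, the set $E^\delta_\rho$ is stationary in $\delta$ if and only if $\cf(\delta)>\rho$ --- for the nontrivial direction one intersects an arbitrary club of $\delta$ with one of order type $\cf(\delta)$ and reads off the point sitting at a position of cofinality $\rho$; conversely, a club of order type $\cf(\delta)\le\rho$ whose non-accumulation points are successor ordinals avoids $E^\delta_\rho$. (II) If $\delta$ is a singular cardinal, then the inaccessible cardinals below $\delta$ are nonstationary in $\delta$: the closure of a cofinal set of successor ordinals chosen above $\cf(\delta)$ is a club all of whose points are either successor ordinals or of cofinality $\le\cf(\delta)$ while being $>\cf(\delta)$, hence never inaccessible. (III) For every stationary $S\s E^\kappa_{\ge\chi}$, $\Tr(S)$ is stationary: otherwise pick a club $D$ with $D\cap\Tr(S)=\emptyset$ and consider the stationary set $S^*:=S\cap\acc^+(D)\s E^\kappa_{\ge\chi}$; for $\delta\in E^\kappa_{>\omega}$, either $\sup(D\cap\delta)<\delta$, in which case $S^*\cap\delta\s\acc^+(D)\cap\delta$ is bounded in $\delta$, or $\sup(D\cap\delta)=\delta$, in which case $\delta\in D$ (as $D$ is closed) and hence $S\cap\delta$ is nonstationary in $\delta$; either way $\delta\notin\Tr(S^*)$, so $\Tr(S^*)=\emptyset$, contradicting the standing assumption.

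Next, I would descend to a stationary set of singular cardinals. Since $\kappa$ is a limit cardinal, the limit cardinals below $\kappa$ form a club, so $R_0$ meets it in a stationary set. Moreover $R_0\cap\{\text{inaccessibles}\}$ is nonstationary: were it stationary it would reflect, by the standing assumption, to some $\delta$; then $\delta$ would be a limit of inaccessibles, hence a limit cardinal, hence --- by (II), which rules out its being singular --- inaccessible itself, and then $R_0\cap\delta$ would be stationary in $\delta$, placing an inaccessible into $\Tr(R_0)$, contrary to the choice of $R_0$. So $T:=R_0\cap\{\text{singular cardinals}\}$ is stationary, lies in $E^\kappa_{\ge\chi}$, and satisfies $\Tr(T)\s\Tr(R_0)$, hence does not reflect at inaccessibles. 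Since $\cf$ is regressive on $T$, Fodor gives a regular $\sigma_1\ge\chi$ with $U_1:=T\cap E^\kappa_{\sigma_1}$ stationary.

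The final step is the iteration. Suppose $U_n\s E^\kappa_{\sigma_n}$ is stationary, consists of singular cardinals, and does not reflect at inaccessibles. By (III), $\Tr(U_n)$ is stationary; by (I), every $\delta\in\Tr(U_n)$ has $\cf(\delta)>\sigma_n$; and every such $\delta$, being a limit of the cardinals in $U_n\cap\delta$, is a limit cardinal. One checks by induction on $n$ that $\Tr(U_n)$ has no inaccessible: for $n=1$ this holds because $U_1\s R_0$; for the step, if some inaccessible $\delta$ had $U_{n+1}\cap\delta$ stationary in $\delta$, then $\Tr(U_n)\cap\delta$ would be stationary in $\delta$, which, as $\delta$ is regular, forces $U_n\cap\delta$ to be stationary in $\delta$, i.e.\ $\delta\in\Tr(U_n)$, against the induction hypothesis. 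Consequently $\cf$ is regressive on $\Tr(U_n)$, and Fodor yields a regular $\sigma_{n+1}>\sigma_n$ together with a stationary $U_{n+1}:=\Tr(U_n)\cap E^\kappa_{\sigma_{n+1}}$; since $U_{n+1}\s\Tr(U_n)$, its members are limit cardinals that are not inaccessible, hence singular cardinals, and it does not reflect at inaccessibles. Since $\sigma_1\ge\chi$, the strictly increasing regular cardinals below $\kappa$ satisfy $\sigma_2\ge\chi^+=\mu$, whence $\mu<\sigma_3<\sigma_4$; putting $\sigma^1:=\sigma_3$, $S^1:=U_3$, $\sigma^0:=\sigma_4$, $S^0:=U_4$ finishes the proof. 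The main obstacle is precisely this cofinality-boosting mechanism: one must verify both that a trace point necessarily has strictly larger cofinality (facts (I) and (III)) and that tracing preserves ``consists of singular cardinals'' and ``does not reflect at inaccessibles'' --- the latter being exactly where the inductive descent back to $R_0$ is needed --- while fact (II) is what eliminates the degenerate inaccessible alternative at the descent step.
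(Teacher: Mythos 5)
Your proof is correct and follows essentially the same strategy as the paper's: iterate the trace operation (its stationarity coming from the standing assumption that every stationary subset of $E^\kappa_{\ge\chi}$ reflects) interleaved with Fodor's lemma to fix cofinalities, observing that tracing strictly raises cofinality while preserving both nonreflection at inaccessibles and consisting-of-singular-cardinals. The only difference is bookkeeping: the paper intersects $T$ with the club $\card(\kappa)$ up front so that $\Tr(T)$ immediately consists of limit cardinals and saves one Fodor application, whereas you first restrict to singular cardinals (via your fact (II) and a reflection argument) and then run the iteration one step longer.
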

\begin{proof} Fix a stationary subset $T\s E^{\kappa}_{\ge\chi}$ that does not reflect at inaccessibles.
Since $\card(\kappa)$ is a club in the inaccessible $\kappa$, we may assume that $T\s\card(\kappa)$,
so that  $\Tr(T)$ is a stationary set consisting of singular cardinals.
By Fodor's lemma, fix a cardinal $\nu\in\reg(\kappa)\setminus\mu$ for which
$R:=\Tr(T)\cap E^\kappa_{\nu}$ is stationary.
As $\Tr(R) \s \Tr(T)$, we can repeat the process to find $\sigma^1\in \reg(\kappa)\setminus(\nu+1)$ such that $\Tr(R) \cap E^\kappa_{\sigma^1}$ is stationary.
Now $S^1:=\Tr(R)\cap E^\kappa_{\sigma^1}\setminus\{\sigma^1\}$ is a stationary set consisting of singular cardinals.
Repeating the process for the last time, we find ${\sigma^0} \in \reg(\kappa)\setminus(\sigma^1+1)$ such that $S^0 := \Tr(S) \cap E^\kappa_{\sigma^0}\setminus\{\sigma^0\}$ is stationary.
Then ${\sigma^0}>\sigma^1>\nu\ge\mu$ and $\Tr(S^0)\s\Tr(S^1)\s\Tr(T)$, so $\sigma^1$, ${\sigma^0}$, $S^1$, and $S^0$ are as sought.
\end{proof}

Let $\sigma^1$, ${\sigma^0}$, $S^1$, and $S^0$ be given by the preceding claim.
Note that since $S^1$ consists of singular cardinals, $\min(S^1)>\sigma_1$.
By \cite[Theorem~2.1.1]{MR3321938}, we fix a sequence $\vec e=\langle e_\delta\mid\delta\in S^1\rangle$ such that
  \begin{itemize}
    \item for all $\delta\in S^1$, $e_\delta$ is a club in $\delta$ of order type $\sigma^1$;
    \item for all $\delta\in S^1$, $\langle \cf(\gamma)\mid \gamma\in\nacc(e_\delta)\rangle$
      is strictly increasing, converging to $\delta$;
    \item for every club $D\s\kappa$, there exists $\delta\in S^1$ with $e_\delta\s D$.
  \end{itemize}

\begin{lemma}\label{swallow} There exists a $C$-sequence
  $\vec C=\langle C_\alpha\mid\alpha<\kappa\rangle$ such that, for all $\alpha<\kappa$:
  \begin{enumerate}
\item $|C_\alpha|=\cf(\alpha)$;
\item if $\acc(C_\alpha)\cap S^1\neq\emptyset$, then $\min(C_\alpha)\ge\cf(\alpha)>\sigma^1$;
\item for every $\delta\in(\acc(C_\alpha)\cup\{\alpha\})\cap S^1$, $\sup(e_\delta\setminus C_\alpha)<\delta$.
  \end{enumerate}
\end{lemma}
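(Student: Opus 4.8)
The plan is to build $\vec C$ by defining each $C_\alpha$ separately (no earlier entry of $\vec C$ is needed), splitting on the value of $\tau:=\cf(\alpha)$. If $\tau\le\sigma^1$ (in particular, if $\alpha$ is a successor ordinal), I would let $C_\alpha:=e_\alpha$ when $\alpha\in S^1$ and otherwise let $C_\alpha$ be an arbitrary club in $\alpha$ of order type $\tau$. Here Clauses~(2) and~(3) come for free from the observation that any club $C$ of order type $\le\sigma^1$ has $\acc(C)\cap E^\kappa_{\sigma^1}=\emptyset$: each $\gamma\in\acc(C)$ satisfies $\otp(C\cap\gamma)<\sigma^1$, hence $\cf(\gamma)=\cf(\otp(C\cap\gamma))<\sigma^1$. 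So $\acc(C_\alpha)\cap S^1=\emptyset$ unless $\alpha\in S^1$, in which case $C_\alpha=e_\alpha$ makes the sole relevant instance of Clause~(3), namely $\delta=\alpha$, trivial; Clause~(1) is clear.

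Now suppose $\tau>\sigma^1$. If $S^1\cap\alpha$ is nonstationary in $\alpha$ — which is automatic when $\alpha$ is inaccessible, as then $\alpha\notin\Tr(S^1)$ — I would take $C_\alpha$ to be a club in $\alpha$ of order type $\tau$ disjoint from $S^1$; being closed, $\acc(C_\alpha)\subseteq C_\alpha$, so Clauses~(2),(3) are vacuous. So assume $S^1\cap\alpha$ is stationary in $\alpha$. Since $S^1\subseteq\card(\kappa)$, this forces $\card(\alpha)$ to be club in $\alpha$, so $\alpha$ is a limit cardinal; and because $\alpha\in\Tr(S^1)$ while $S^1$ does not reflect at inaccessibles, $\alpha$ is not regular. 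Hence $\alpha$ is a singular cardinal and $\tau<\alpha$. This is the only use of the nonreflection-at-inaccessibles hypothesis, and it is precisely what guarantees $\sigma^1=|e_\delta|<\tau=\cf(\alpha)$ in the situation where absorption is actually needed.

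For such a singular $\alpha$ I would build an $\subseteq$-increasing $\omega$-chain of clubs in $\alpha$: pick a club $C^0$ in $\alpha$ of cardinality $\tau$ with $\min(C^0)\ge\tau$, and recursively set $C^{n+1}:=\cl(C^n\cup\bigcup\{e_\delta\setminus\tau\mid\delta\in\acc(C^n)\cap S^1\})$. Each $C^n$ is a club in $\alpha$ with $\min(C^n)\ge\tau$, and since $|e_\delta|=\sigma^1<\tau$ with $\tau$ regular and $|\acc(C^n)\cap S^1|\le|C^n|$, induction yields $|C^n|=\tau$. Then I would set $C_\alpha:=\cl(\bigcup_{n<\omega}C^n)$, so $|C_\alpha|=\tau=\cf(\alpha)$ and $\min(C_\alpha)\ge\tau$ (everything adjoined lies in $[\tau,\alpha)$), which is Clause~(1). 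The crux is the claim that $\acc(C_\alpha)\cap S^1\subseteq\bigcup_{n<\omega}\acc(C^n)$: if $\delta\in\acc(C_\alpha)\cap S^1$ satisfied $\sup(C^n\cap\delta)<\delta$ for all $n$, then $\langle\sup(C^n\cap\delta)\mid n<\omega\rangle$ would witness $\cf(\delta)\le\omega$, contradicting $\cf(\delta)=\sigma^1$. So $\delta\in\acc(C^n)$ for some $n$, whence $e_\delta\setminus\tau\subseteq C^{n+1}\subseteq C_\alpha$ and $\sup(e_\delta\setminus C_\alpha)\le\tau<\delta$, giving Clause~(3). And since such $\delta$ exist only when $\acc(C_\alpha)\cap S^1\neq\emptyset$, in which case $\min(C_\alpha)\ge\tau=\cf(\alpha)>\sigma^1$, Clause~(2) follows as well.

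The hard part, I expect, is not any single computation but getting the case analysis right: recognizing that the clubs $e_\delta$ only ever need to be swallowed at singular $\alpha$ (so that the cardinality bookkeeping of the $\omega$-chain closes), and that a mere $\omega$ rounds of closing off already capture every $\delta\in\acc(C_\alpha)\cap S^1$, which hinges on $\cf(\delta)=\sigma^1$ being uncountable. Everything else — that each $C^n$ and $C_\alpha$ is a club, the cardinality induction, and the $\min$ bound — is routine manipulation of clubs.
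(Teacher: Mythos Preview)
Your proposal is correct and follows essentially the same club-swallowing approach as the paper: both start from a carefully chosen initial club at each $\alpha$ and iteratively adjoin $e_\delta\setminus\cf(\alpha)$ for $\delta\in\acc(C^n_\alpha)\cap S^1$, taking the closure of the union of the $\omega$-chain and using $\cf(\delta)=\sigma^1>\omega$ to argue that every relevant $\delta$ already appears in some $\acc(C^n_\alpha)$. The only difference is presentational: the paper runs the recursion uniformly over all $\alpha$ (with the case analysis absorbed into the definition of $C^0_\alpha$), whereas you make the case split explicit and only invoke the $\omega$-chain in the one case---$\alpha$ a singular cardinal with $S^1\cap\alpha$ stationary---where the swallowing is genuinely nontrivial.
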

\begin{proof} 
This is a standard club-swallowing trick, but we do not know of a reference in which the above precise properties are exposed.

By recursion on $n<\omega$, we shall define a $C$-sequence $\vec{C^n}=\langle C_\alpha^n\mid \alpha<\kappa\rangle$, as follows.
We commence with the case $n=0$:

$\br$ Let $C_0^0:=\emptyset$ and $C_{\alpha+1}^0:=\{\alpha\}$ for all $\alpha<\kappa$.

$\br$ For each $\alpha\in\acc(\kappa)\setminus(\reg(\kappa)\cup S^1)$, let $C^0_\alpha$ be a club in $\alpha$
with $\otp(C^0_\alpha)=\cf(\alpha)=\min(C^0_\alpha)$.

$\br$ For each $\alpha\in S^1$, let $C_\alpha^0:=e_\alpha\setminus\cf(\alpha)$.

$\br$ For each $\alpha\in\reg(\kappa)$, since $S^1$ consists of singular cardinals and does not reflect at inaccessibles,
we may let $C_\alpha^0$ be a club in $\alpha$ with $\acc(C_\alpha)\cap S^1=\emptyset$.

Next, suppose that $n<\omega$ is such that $\vec{C^n}$ has already been defined
to satisfy requirements (1) and (2) of the lemma.
Define a $C$-sequence $\vec C^{n+1}=\langle C_\alpha^{n+1}\mid \alpha<\kappa\rangle$ by letting,
for each $\alpha<\kappa$, $C_\alpha^{n+1}$ be the closure in $\alpha$ of the set
$$C_\alpha^n\cup\bigcup\{ e_\delta\setminus\cf(\alpha)\mid \delta\in\acc(C_\alpha^n)\cap S^1\}.$$

To see that Clauses (1) and (2) remain valid also for $\vec C^{n+1}$, let $\alpha<\kappa$ be arbitrary.
If $C^n_\alpha=C^{n+1}_\alpha$, then we are done, 
so assume $C^n_\alpha\neq C^{n+1}_\alpha$.
In particular, $\acc(C^n_\alpha)\cap S^1\neq\emptyset$,
so that, by the inductive hypothesis, $|C^n_\alpha|=\cf(\alpha)>\sigma^1=\cf(\delta)$ for all $\delta\in \acc(C^n_\alpha)\cap S^1$.
In effect, $|C^{n+1}_\alpha|=\cf(\alpha)$.

Finally, for each $\alpha<\kappa$, let $C_\alpha$ be the closure in $\alpha$ of $\bigcup_{n<\omega}C_\alpha^n$.
As $S^1\s E^\kappa_{\sigma^1}\s E^\kappa_{>\omega}$, the above construction ensures that Clause~(3) holds, as well.
\end{proof}

Let $\vec C$ be given by the preceding lemma.
Recalling Subsection~\ref{subsectionwalks}, we now let $\Tr,\tr,\lambda$ and $\rho_2$ be the characteristic functions of walking along $\vec C$,
and let $\eta_{\alpha,\beta}$ be the notation established in Definition~\ref{etanotation}.

\begin{defn} For every  $(\delta,\beta)\in S^1\circledast\kappa$, let $\Lambda(\delta,\beta)$ denote the least $\gamma\in\nacc(e_\delta)$ such that all of the following hold:
\begin{itemize}
\item $\gamma>\lambda(\last{\delta}{\beta},\beta)$;
\item $\cf(\gamma)>\cf(\last{\delta}{\beta})$;
\item $e_\delta\setminus\sup(e_\delta\cap\gamma)\s C_{\last{\delta}{\beta}}$.
\end{itemize}
\end{defn}

\begin{lemma}\label{nacclemma} 
Let $(\delta,\beta)\in S^1\circledast\kappa$.
Then $\Lambda(\delta,\beta)$ is well-defined, and:
\begin{enumerate}
\item $\nacc(e_\delta)\setminus\Lambda(\delta,\beta)\s\nacc(C_{\last{\delta}{\beta}})$;
\item for every $\varepsilon\in \nacc(C_{\last{\delta}{\beta}})\cap[\Lambda(\delta,\beta),\delta)$, $\sup(e_\delta\cap\varepsilon)\le\lambda(\varepsilon,\beta)<\varepsilon$;
\item for every $\varepsilon\in \nacc(C_{\last{\delta}{\beta}})\cap[\Lambda(\delta,\beta),\delta)$, $\min(\im(\tr(\varepsilon,\beta))=\last{\delta}{\beta}$;
\item $\cf({\last{\delta}{\beta}})\ge\sigma^1$.
\end{enumerate}
\end{lemma}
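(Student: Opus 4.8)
The plan is to abbreviate $\bar\delta:=\last{\delta}{\beta}$ and to lean on Lemma~\ref{last}, which gives $\lambda(\bar\delta,\beta)<\delta$ and, when $\bar\delta\neq\delta$, $\delta\in\acc(C_{\bar\delta})$. In the latter case $\delta\in\acc(C_{\bar\delta})\cap S^1$, so Lemma~\ref{swallow}(2) yields $\sigma^1<\cf(\bar\delta)\le\min(C_{\bar\delta})<\delta$ (the last inequality since $\delta$ is a limit point of $C_{\bar\delta}$); in the former case $\cf(\bar\delta)=\cf(\delta)=\sigma^1<\delta$ as $\delta\in S^1\s E^\kappa_{\sigma^1}$ is a singular cardinal. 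Either way this yields Clause~(4) together with the auxiliary bound $\cf(\bar\delta)<\delta$. For well-definedness of $\Lambda(\delta,\beta)$ I would verify that each of its three defining requirements holds on a tail of $\nacc(e_\delta)$: the first since $\lambda(\bar\delta,\beta)<\delta=\sup(e_\delta)$; the second since $\langle\cf(\gamma)\mid\gamma\in\nacc(e_\delta)\rangle$ is strictly increasing with supremum $\delta>\cf(\bar\delta)$; and the third since $\delta\in(\acc(C_{\bar\delta})\cup\{\bar\delta\})\cap S^1$, so Lemma~\ref{swallow}(3) gives $\sup(e_\delta\setminus C_{\bar\delta})<\delta$, whence $e_\delta\setminus\sup(e_\delta\cap\gamma)\s C_{\bar\delta}$ for every $\gamma\in\nacc(e_\delta)$ whose immediate $e_\delta$-predecessor exceeds that sup. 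The least common member of the three tails is $\Lambda(\delta,\beta)$.

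For Clause~(1), fix $\varepsilon\in\nacc(e_\delta)\setminus\Lambda(\delta,\beta)$. Applying the third defining clause of $\Lambda$ with $\gamma:=\Lambda(\delta,\beta)$ puts $\varepsilon$ into $C_{\bar\delta}$; applying the second clause together with monotonicity of $\gamma\mapsto\cf(\gamma)$ along $\nacc(e_\delta)$ gives $\cf(\varepsilon)\ge\cf(\Lambda(\delta,\beta))>\cf(\bar\delta)=|C_{\bar\delta}|$, the last equality by Lemma~\ref{swallow}(1). Were $\varepsilon\in\acc(C_{\bar\delta})$, then $C_{\bar\delta}\cap\varepsilon$ would be cofinal in $\varepsilon$, forcing $\cf(\varepsilon)\le|C_{\bar\delta}|<\cf(\varepsilon)$; so $\varepsilon\in\nacc(C_{\bar\delta})$.

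For Clauses~(2) and~(3), fix $\varepsilon\in\nacc(C_{\bar\delta})\cap[\Lambda(\delta,\beta),\delta)$. Since $\varepsilon\in C_{\bar\delta}$ and $\varepsilon<\bar\delta$, the walk from $\bar\delta$ reaches $\varepsilon$ in one step: $\tr(\varepsilon,\bar\delta)=\langle\bar\delta\rangle$ and $\lambda(\varepsilon,\bar\delta)=\sup(C_{\bar\delta}\cap\varepsilon)<\varepsilon$. As $\lambda(\bar\delta,\beta)<\Lambda(\delta,\beta)\le\varepsilon<\bar\delta\le\beta$, Fact~\ref{fact2} (or direct inspection if $\bar\delta=\beta$, reading $\tr(\bar\delta,\beta)$ as the empty sequence) gives $\tr(\varepsilon,\beta)=\tr(\bar\delta,\beta){}^\smallfrown\langle\bar\delta\rangle$. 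Since every entry of $\tr(\bar\delta,\beta)$ exceeds $\bar\delta$, the least element of $\im(\tr(\varepsilon,\beta))$ is $\bar\delta$, which is Clause~(3); and Lemma~\ref{fact3} gives $\lambda(\varepsilon,\beta)=\max\{\lambda(\bar\delta,\beta),\sup(C_{\bar\delta}\cap\varepsilon)\}$, which is $<\varepsilon$. Finally, the third defining clause of $\Lambda$ shows that every member of $e_\delta$ lying in $[\sup(e_\delta\cap\Lambda(\delta,\beta)),\varepsilon)$ belongs to $C_{\bar\delta}$, while $\sup(e_\delta\cap\Lambda(\delta,\beta))$ is either $0$ or itself such a member; hence $\sup(e_\delta\cap\varepsilon)\le\sup(C_{\bar\delta}\cap\varepsilon)\le\lambda(\varepsilon,\beta)$, giving Clause~(2). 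The step I expect to be most delicate is this last one: pinning down the one-step decomposition $\tr(\varepsilon,\beta)=\tr(\bar\delta,\beta){}^\smallfrown\langle\bar\delta\rangle$ uniformly, including the degenerate case $\bar\delta=\beta$, and then comparing $\sup(e_\delta\cap\varepsilon)$ with $\sup(C_{\bar\delta}\cap\varepsilon)$ through the ``swallowing'' clause, which requires tracking exactly where the $e_\delta$-predecessor of $\Lambda(\delta,\beta)$ sits and disposing of the corner case $\Lambda(\delta,\beta)=\min(e_\delta)$.
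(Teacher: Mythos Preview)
Your proof is correct and follows essentially the same approach as the paper: both rely on Lemma~\ref{last} together with the swallowing properties of Lemma~\ref{swallow} to establish well-definedness and Clause~(4), then use the cofinality comparison $\cf(\varepsilon)>|C_{\bar\delta}|$ for Clause~(1), and finally invoke Fact~\ref{fact2} and Lemma~\ref{fact3} with the inclusion $e_\delta\setminus\sup(e_\delta\cap\Lambda(\delta,\beta))\subseteq C_{\bar\delta}$ for Clauses~(2) and~(3). If anything, you are slightly more explicit than the paper about the degenerate case $\bar\delta=\beta$ and the corner case $\Lambda(\delta,\beta)=\min(e_\delta)$.
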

\begin{proof} Since $\langle \cf(\gamma)\mid \gamma\in\nacc(e_\delta)\rangle$ is strictly increasing and converging to $\delta$,
the first part of the following claim implies that $\Lambda(\delta,\beta)$ is well-defined.

\begin{claim} $\max\{\lambda(\last{\delta}{\beta},\beta),\cf({\last{\delta}{\beta}}),\sup(e_\delta\setminus C_{\last{\delta}{\beta}})\}<\delta$
and $\cf({\last{\delta}{\beta}})\ge\sigma^1$.
\end{claim}
\begin{proof} 
 By Lemma~\ref{last}(1), $\lambda(\last{\delta}{\beta},\beta)<\delta$. 
Now, there are two cases to consider:

$\br$ If $\last{\delta}{\beta}=\delta$,
then from $\delta\in S^1\s E^\kappa_{\sigma^1}$ and $\min(S^1)>\sigma^1$, we infer that $\cf(\delta)=\sigma^1<\delta$.
Now, by Lemma~\ref{swallow}(3), $\sup(e_\delta\setminus C_\delta)<\delta$. 

$\br$ If $\last{\delta}{\beta}\neq\delta$, then set $\alpha:=\last{\delta}{\beta}$.
By Lemma~\ref{last}(2), $\delta\in\acc(C_{\alpha})$.
So, by Lemma~\ref{swallow}(2), $\delta>\min(C_\alpha)\ge\cf(\alpha)>\sigma^1$.
In addition, by Lemma~\ref{swallow}(3), $\sup(e_\delta\setminus C_\alpha)<\delta$.
\end{proof}

For every $\varepsilon\in\nacc(e_\delta)$ above $\sup(e_\delta\setminus C_{\last{\beta}{\delta}})$ 
and of cofinality greater than $\cf(\last{\beta}{\delta})=|C_{\last{\beta}{\delta}}|$,
we have $\varepsilon\in\nacc(C_{\last{\beta}{\delta}})$, so that Clause~(1) holds.

Now, let $\varepsilon\in \nacc(C_{\last{\delta}{\beta}})\cap[\Lambda(\delta,\beta),\delta)$ be arbitrary.
We have $$\lambda(\last{\delta}{\beta},\beta)<\Lambda(\delta,\beta)\le\varepsilon<\delta\le\last{\delta}{\beta}\le\beta,$$
so, by Fact~\ref{fact2}, $\tr(\varepsilon,\beta)=\tr(\last{\delta}{\beta},\beta){}^\smallfrown\tr(\varepsilon,\last{\delta}{\beta})$
and Clause~(3) holds.
By Lemma~\ref{fact3}, $\lambda(\varepsilon,\beta)=\max\{\lambda(\last{\delta}{\beta},\beta),\sup(C_{\last{\delta}{\beta}}\cap\varepsilon)\}$.
Since $e_\delta\setminus\sup(e_\delta\cap\Lambda(\delta,\beta))\s C_{\last{\delta}{\beta}}$,
we infer that $\sup(C_{\last{\delta}{\beta}}\cap\varepsilon)\ge\sup(e_\delta\cap\varepsilon)$,
and hence Clause~(2) holds as well.
\end{proof}

Define a collection $\mathcal I\s\mathcal P(\kappa)$ via $A\in\mathcal I$ iff
there exists a club $D\s\kappa$ such that for every $\delta\in S^1\cap\acc(D)$, $\sup(\nacc(e_\delta)\cap D\cap A)<\delta$.
It is clear that $\mathcal I$ is a $\sigma^1$-complete ideal over $\kappa$, extending $\ns_{\kappa}$. 
By the choice of $\vec e$, $\mathcal I$ is moreover proper. The next lemma is the only part of the proof that makes use of $S^0$ and $\sigma^0$.

\begin{lemma}  $\mathcal I$ is not weakly $\mu$-saturated, i.e.,
there is a partition $\kappa=\biguplus_{i<\mu}H_i$ such that $H_i\in\mathcal I^+$ for every $i<\mu$.
\end{lemma}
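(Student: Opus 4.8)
The plan is to build the required partition out of a club-guessing sequence carried by $S^0$, exploiting that the order type $\sigma^0$ of its clubs is larger than $\mu$; this is the only step where $S^0$ and $\sigma^0$ are used.

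It suffices to produce one function $f\colon\kappa\to\mu$ with $f^{-1}\{i\}\in\mathcal I^+$ for every $i<\mu$, and then put $H_i:=f^{-1}\{i\}$. Since $\kappa$ is inaccessible, $(\sigma^0)^+<\kappa$, and since $S^0\s\Tr(S^1)$, the trace $S^1\cap\zeta$ is stationary in $\zeta$ for every $\zeta\in S^0$; hence, by a standard (Shelah-type) club-guessing argument localized to $S^1$, there is a sequence $\vec g=\langle g_\zeta\mid\zeta\in S^0\rangle$ with each $g_\zeta$ a club in $\zeta$ of order type $\sigma^0$, $\nacc(g_\zeta)\s S^1$, and with the guessing feature that every club $D\s\kappa$ contains $\nacc(g_\zeta)$ for some $\zeta\in S^0$. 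The construction of $\vec g$ is carried out recursively, and at the step treating a given $\zeta$ we also absorb into $g_\zeta$, for each $\gamma\in\nacc(g_\zeta)$, a cofinal-in-$\gamma$ subset of $\nacc(e_\gamma)$ --- this does not push $\otp(g_\zeta)$ past $\sigma^0$ since $|e_\gamma|=\sigma^1<\sigma^0=\cf(\zeta)$. Fixing a partition $\sigma^0=\biguplus_{i<\mu}B_i$ into $\mu$ pieces each cofinal in $\sigma^0$, we define $f$ along the recursion so that, writing $\nacc(g_\zeta)=\{\gamma^\zeta_\xi\mid\xi<\sigma^0\}$ in increasing order, for every $i<\mu$ there are cofinally-in-$\sigma^0$ many $\xi\in B_i$ for which $f$ takes the value $i$ on a cofinal-in-$\gamma^\zeta_\xi$ subset of the piece of $\nacc(e_{\gamma^\zeta_\xi})$ absorbed into $g_\zeta$; on ordinals never reached we set $f\equiv 0$.

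Granting such an $f$, fix $i<\mu$ and a club $D\s\kappa$; replacing $D$ by $\acc(D)$ we may assume $D=\acc(D)$. Pick $\zeta\in S^0$ with $\nacc(g_\zeta)\s D$, and then, using the previous paragraph, pick $\xi\in B_i$ large with $\delta:=\gamma^\zeta_\xi$ such that $f$ is $i$ on a cofinal-in-$\delta$ part of the absorbed piece of $\nacc(e_\delta)$. Since that piece lies in $g_\zeta\s D$ and $\delta\in\nacc(g_\zeta)\s S^1\cap D=S^1\cap\acc(D)$, we get $\sup(\nacc(e_\delta)\cap D\cap f^{-1}\{i\})=\delta$, so $f^{-1}\{i\}\notin\mathcal I$, as needed.

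The heart of the matter --- and the expected main obstacle --- is the coherent definition of $f$ promised in the second paragraph: a single ordinal may be absorbed into $g_\zeta$ for many $\zeta\in S^0$ and lie in $\nacc(e_\gamma)$ for many $\gamma\in S^1$, so the recursion must be engineered, using the room provided by $\sigma^0>\sigma^1>\mu$ and the reflection $S^0\s\Tr(S^1)$, to keep enough uncoloured cofinal stretches of the relevant $\nacc(e_\gamma)$'s available at the $B_i$-indexed slots of each $g_\zeta$ --- for instance by controlling how often a given $\gamma$ is reused and partitioning $\nacc(e_\gamma)$ (which has size $\sigma^1$) into correspondingly many cofinal pieces. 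An alternative is to bypass $\vec g$ and improve $\vec e$ itself so that its nacc points admit a single $\mu$-colouring that is cofinally surjective along $e_\delta$ for club-many $\delta\in S^1$; the combinatorial core is the same.
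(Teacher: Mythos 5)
Your proposal takes a genuinely different route from the paper, and it has a real gap that you yourself flag. The paper's argument is essentially a reduction to a black-box theorem of Shelah: for each $\delta\in S^1$ one takes the local ideal $I_\delta:=\{A\s e_\delta\mid\sup(\nacc(e_\delta)\cap A)<\delta\}$, observes that $I_\delta$ is $\sigma^1$-complete and, crucially, $\sigma^0$-indecomposable, recognizes the ambient ideal $\mathcal I$ as $\id_p(\bar C,\bar I)$ in the sense of \cite[Definition~3.0]{Sh:365} for $\bar C=\langle e_\delta\mid\delta\in S^1\rangle$ and $\bar I=\langle I_\delta\mid\delta\in S^1\rangle$, and then invokes Case~$(\beta)(a)$ of \cite[Claim~3.3]{Sh:365}; given the nonreflecting (at inaccessibles) stationary $S^0\s E^\kappa_{\sigma^0}$, that theorem produces a partition of $\kappa$ into $\sigma^0$ many $\mathcal I$-positive sets, and $\sigma^0>\mu$ finishes. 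No new club-guessing structure is built.

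You instead attempt a hands-on construction, which amounts to reproving (a special case of) Shelah's theorem, and you stop exactly where the work lies. You never actually produce the colouring $f$: you describe the properties a recursion ``must be engineered'' to satisfy, note the obstruction (a single ordinal is claimed by many $\zeta\in S^0$ and many $\gamma\in S^1$), and then defer its resolution to an unspecified bookkeeping scheme. That is the entire combinatorial content of the lemma, and it is not done. There is also a smaller internal inconsistency in the setup: once cofinal-in-$\gamma$ subsets of $\nacc(e_\gamma)$ are absorbed into $g_\zeta$ for $\gamma\in\nacc(g_\zeta)$, those $\gamma$ become accumulation points of the enriched $g_\zeta$, so the enumeration $\nacc(g_\zeta)=\{\gamma^\zeta_\xi\mid\xi<\sigma^0\}$ no longer lists the points you intend; you would have to keep the base club and the enriched club separate. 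Finally, the auxiliary sequence $\vec g$ you posit --- clubs of order type $\sigma^0$ through $\zeta\in S^0$ with $\nacc(g_\zeta)\s S^1$ and the strong guessing property $g_\zeta\s D$ --- is itself nontrivial and not ``standard'' off the shelf; it is of comparable difficulty to the Hoffman-type theorem already cited for $\vec e$ and needs justification, with $S^0\s\Tr(S^1)$ doing the reflection work. In short, the skeleton of a direct argument is plausible and is in the spirit of the machinery the paper quotes, but as written the proposal has an acknowledged hole at precisely the one step that is hard, whereas the paper closes that hole by citation.
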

\begin{proof} For each $\delta\in S^1$, let
$I_\delta:=\{ A\s e_\delta\mid \sup(\nacc(e_\delta)\cap A)<\delta\}$, and note:
\begin{itemize}
\item As $\cf(\delta)=\sigma^1$, 
$I_\delta$ is a $\sigma^1$-complete ideal over $e_\delta$;
\item As $\sigma^0$ is a regular cardinal greater than $\cf(\delta)$, 
for every $\s$-increasing sequence $\langle A_j\mid j<\sigma^0\rangle$ of sets from $I_\delta$,
the union $\bigcup_{j<\sigma^0}A_j$ is in $I_\delta$, as well. That is, the ideal $I_\delta$ is $\sigma^0$-indecomposable.
\end{itemize}

Trivially, $\sup_{\delta\in S^1}|e_\delta|^+<\kappa$. Setting $\bar C:=\langle e_\delta\mid\delta\in S^1\rangle$ 
and $\bar I:=\langle I_\delta\mid \delta\in S^1\rangle$,
and recalling \cite[Definition~3.0]{Sh:365}, it is evident that the ideal
$\id_p(\bar C,\bar I)$ is equal to our proper ideal $\mathcal I$.
As $S^0$ is a stationary subset of $E^\kappa_{\sigma^0}$ that does not reflect at
inaccessibles, Case $(\beta)(a)$ of \cite[Claim~3.3]{Sh:365} entails the existence
of a partition of $\kappa$ into $\sigma^0$ many $\mathcal I$-positive sets.
In particular, since $\sigma^0>\mu$, $\mathcal I$ is not weakly $\mu$-saturated.
\end{proof}

By the preceding lemma, fix a surjection $h:\kappa\rightarrow\mu$ such that
  $H_i:=h^{-1}\{i\}$ is in $\mathcal I^+$ for all $i<\mu$. 
Then, define a function $\tr_h:[\kappa]^2\rightarrow{}^{<\omega}\mu$ via $\tr_h(\alpha,\beta):=h\circ\tr(\alpha,\beta)$.

Let $d:{}^{<\omega}\mu\rightarrow\omega\times\mu\times\mu\times\mu$
be the function given by Fact~\ref{pl6} using $\nu:=\chi$.
We are now ready to define our transformation.
\begin{defn}
Define $\mathbf t:[\kappa]^2\rightarrow[\kappa]^3$
by letting, for all $(\alpha,\beta)\in[\kappa]^2$, $\mathbf t(\alpha,\beta):=(\tau^*,\alpha^*,\beta^*)$ provided that,
for $(n,i,j,\tau):=d(\tr_h(\alpha,\beta))$, all of the following conditions are met:
\begin{itemize}
\item $\beta^*=\Tr(\alpha,\beta)(n)$ is $>\alpha$,
\item $\eta:=\lambda(\beta^*,\beta)$ satisfies that $\eta+1<\alpha$,
\item $\alpha^*=\Tr(\eta+1,\alpha)(\eta_{\eta+1,\alpha})$, and
\item $\tau^*=\tau<\alpha^*$.
\end{itemize}
Otherwise, let $\mathbf t(\alpha,\beta):=(0,\alpha,\beta)$.
\end{defn}

To verify that $\mathbf t$ witnesses $\pl_1(\kappa,\mu,\chi)$,
suppose that we are given a family $\mathcal A\s[\kappa]^{<\chi}$ consisting of $\kappa$ many pairwise disjoint sets.

\begin{lemma} For every $i<\mu$, there exist an ordinal $\zeta_i<\kappa$
and a sequence $\langle x_\gamma\mid \gamma\in\bar H_i\rangle$ such that:
\begin{itemize}
\item $\bar H_i$ is a stationary subset of $H_i$;
\item for all $\gamma\in\bar H_i$, $x_\gamma\in\mathcal A$ with $\min(x_\gamma)>\gamma$;
\item for all $\gamma\in\bar H_i$ and $\beta\in x_\gamma$, $\lambda(\gamma,\beta)\le\zeta_i$.
\end{itemize}
\end{lemma}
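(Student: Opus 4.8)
Fix $i<\mu$. The plan is to isolate a stationary set $T\s H_i$ of ``good'' ordinals, each carrying a suitable member of $\mathcal A$, and then to read off $\bar H_i$ and $\zeta_i$ from $T$ by a single application of Fodor's lemma. Call $\gamma$ \emph{good} if $\gamma\in H_i\cap E^\kappa_{\ge\chi}$ and there are $\delta\in S^1$ and $a\in\mathcal A$ such that $\gamma\in\nacc(e_\delta)$, $\min(a)>\delta$, and $\gamma>\sup\{\Lambda(\delta,\beta)\mid\beta\in a\}$; for each good $\gamma$ I would fix such a witnessing pair $(\delta_\gamma,x_\gamma)$, and let $T$ be the set of good ordinals. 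The third requirement is meaningful because $\Lambda(\delta,\beta)<\delta$ for each $\beta$ (part of Lemma~\ref{nacclemma}) and $|a|<\chi<\sigma^1=\cf(\delta)$, so that $\sup\{\Lambda(\delta,\beta)\mid\beta\in a\}<\delta$.

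Two routine observations feed into this. Since $\kappa$ is inaccessible and $\mathcal A$ consists of $\kappa$ many pairwise disjoint sets, for every ordinal below $\kappa$ there are $\kappa$ many (in particular, at least one) $a\in\mathcal A$ whose minimum exceeds it. And since $\mathcal I\supseteq\ns_\kappa$, unwinding the definition of $\mathcal I$ shows that $H_i\in\mathcal I^+$ is equivalent to: for every club $D\s\kappa$ there is $\delta\in S^1\cap\acc(D)$ with $\sup(\nacc(e_\delta)\cap D\cap H_i)=\delta$.

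The crux is to verify that $T$ is stationary and that $\sup\{\lambda(\gamma,\beta)\mid\beta\in x_\gamma\}<\gamma$ for every $\gamma\in T$. For stationarity, given a club $D$, I would use the equivalence above to fix $\delta\in S^1\cap\acc(D)$ with $\nacc(e_\delta)\cap D\cap H_i$ cofinal in $\delta$, fix any $a\in\mathcal A$ with $\min(a)>\delta$, and set $\zeta:=\sup\{\Lambda(\delta,\beta)\mid\beta\in a\}<\delta$; since the cofinalities of the members of $\nacc(e_\delta)$ are strictly increasing and converging to $\delta$ (a property of $\vec e$), among the cofinally-many $\gamma\in\nacc(e_\delta)\cap D\cap H_i$ I can choose one with $\gamma>\zeta$ and $\cf(\gamma)\ge\chi$, and this $\gamma$ lies in $T\cap D$, witnessed by $(\delta,a)$. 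For the regressivity estimate, fix $\gamma\in T$, write $\delta:=\delta_\gamma$, and let $\beta\in x_\gamma$; then $(\delta,\beta)\in S^1\circledast\kappa$ and $\gamma>\Lambda(\delta,\beta)$, so Lemma~\ref{nacclemma}(1) places $\gamma$ in $\nacc(C_{\last{\delta}{\beta}})$ and then Lemma~\ref{nacclemma}(2) yields $\lambda(\gamma,\beta)<\gamma$; as $|x_\gamma|<\chi\le\cf(\gamma)$, the supremum of these values over $\beta\in x_\gamma$ stays below $\gamma$.

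Finally, letting $f(\gamma):=\sup\{\lambda(\gamma,\beta)\mid\beta\in x_\gamma\}$ for $\gamma\in T$, the function $f$ is regressive on the stationary set $T$, so Fodor's lemma yields a stationary $\bar H_i\s T$ and an ordinal $\zeta_i<\kappa$ with $f$ constantly equal to $\zeta_i$ on $\bar H_i$; then $\bar H_i$, the restricted sequence $\langle x_\gamma\mid\gamma\in\bar H_i\rangle$, and $\zeta_i$ satisfy all three demands, since $\bar H_i\s H_i$ is stationary, $x_\gamma\in\mathcal A$ with $\min(x_\gamma)>\delta_\gamma>\gamma$, and $\lambda(\gamma,\beta)\le f(\gamma)=\zeta_i$. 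I expect the main obstacle to be exactly the regressivity: for a general pair $\gamma<\beta$ one cannot hope for $\lambda(\gamma,\beta)<\gamma$, as this fails precisely when the walk $\tr(\gamma,\beta)$ meets some $\tau$ with $\sup(C_\tau\cap\gamma)=\gamma$, and for the present $\vec C$ the set of such $\gamma$ need not be nonstationary. Confining $\gamma$ to $\nacc(e_\delta)$ above $\sup\{\Lambda(\delta,\beta)\mid\beta\in x_\gamma\}$ is the regime where Lemma~\ref{nacclemma} rescues us, and the delicate point is to arrange the parameters in the right order — $\delta$ first, then the $\mathcal A$-member, then $\gamma$ past the relevant supremum of $\Lambda$-values — so that everything fits below $\delta$ using $|a|<\chi<\cf(\delta)$ and $H_i\in\mathcal I^+$.
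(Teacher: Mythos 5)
Your proposal is correct and follows essentially the same route as the paper's proof: fix $\delta\in S^1$ guessing the club via $H_i\in\mathcal I^+$, pick $x\in\mathcal A$ above $\delta$, take $\gamma\in\nacc(e_\delta)\cap D\cap H_i$ past $\sup_{\beta\in x}\Lambda(\delta,\beta)$ (and of cofinality at least $\chi$ so that the supremum of $|x|$-many $\lambda$-values stays below $\gamma$, which the paper leaves implicit in "large enough $\varepsilon$"), and then press down. Packaging the argument as a set $T$ of "good" ordinals rather than invoking the pressing-down lemma at the outset is purely organizational.
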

\begin{proof} Let $i<\mu$.
By the pressing down lemma, it suffices to prove that for every club $D\s\kappa$,
there exist $\gamma\in D\cap H_i$, $\zeta<\gamma$ and $x\in\mathcal A$ with $\min(x)>\gamma$ 
such that  $\lambda(\gamma,\beta)\le\zeta$ for all $\beta\in x$.
Thus, let $D$ be an arbitrary club in $\kappa$.

Since $H_i$ is in $\mathcal I^+$, we may fix $\delta\in S^1$ such that $\sup(\nacc(e_\delta)\cap D\cap H_i)=\delta$. 
Fix any $x\in\mathcal A$ with $\min(x)>\delta$. As $\cf(\delta)=\sigma^1>|x|$,
we may fix a large enough $\varepsilon\in \nacc(e_\delta)\cap D\cap H_i$ above $\sup_{\beta\in x}\Lambda(\delta,\beta)$.
Then, by Clauses (1) and (2) of Lemma~\ref{nacclemma}, $\sup_{\beta\in x}\lambda(\varepsilon,\beta)<\varepsilon$.
So $\gamma:=\varepsilon$ and $\zeta:=\sup_{\beta\in x}\lambda(\gamma,\beta)$ are as sought.
\end{proof}

For each $i<\mu$, let $\zeta_i$ and $\vec{x^i}=\langle x_\gamma\mid \gamma\in\bar H_i\rangle$ be given by the preceding lemma. 
Set $\zeta:=\sup_{i<\mu}\zeta_i$.

\begin{defn} For $\eta<\kappa$, $S_{\eta}$ denotes the set of all $\epsilon<\kappa$ 
with the property that, for every $\varsigma<\kappa$, there exists a sequence $\langle \beta_i\mid i<\mu\rangle\in\prod_{i<\mu}\bar H_i\setminus\varsigma$, such that,
for all $i<\mu$ and $\beta\in x_{\beta_i}$:
\begin{enumerate}
\item[(i)]  $i\in\im(\tr_h(\epsilon,\beta))$;
\item[(ii)] $\lambda(\epsilon,\beta)=\eta$;
\item[(iii)]  $\rho_2(\epsilon,\beta)=\eta_{\epsilon,\beta}$.
\end{enumerate}
\end{defn}

\begin{lemma}  There exists $\eta<\kappa$ for which $S_\eta$ is stationary.
\end{lemma}
\begin{proof} Let $D$ be an arbitrary club in $\kappa$;
we shall find $\epsilon\in D$ and $\eta<\epsilon$ for which $\epsilon\in S_{\eta}$.
By the choice of $\vec e$, the set $\Gamma:=\{\gamma\in S^1\mid \zeta<\gamma\ \&\ e_\gamma\s D\}$ is stationary.
Now, fix $\delta\in S^1$ such that $e_\delta\s\acc^+(\Gamma)$.

Let $\varsigma<\kappa$. Fix any sequence $\langle \beta^\varsigma_i\mid i<\mu\rangle\in\prod_{i<\mu}\bar H_i\setminus\max\{\delta+1,\varsigma\}$.
We shall find an ordinal $\epsilon^\varsigma\in D\cap\delta$, as follows.

As $\cf(\delta)=\sigma^1>\mu$, let us fix a large enough $\varepsilon^\varsigma\in \nacc(e_\delta)$
above $\max\{\zeta,\allowbreak\sup_{i<\mu}\Lambda(\delta,\beta_i^\varsigma)\}$.
As $\langle \cf(\varepsilon)\mid \varepsilon\in\nacc(e_\delta)\rangle$ is strictly increasing and converging to $\delta$,
we may also require that $\cf(\varepsilon^\varsigma)>\mu$.
By Lemma~\ref{nacclemma}(2), $\Lambda^\varsigma:=\max\{\zeta,\sup_{i<\mu}\lambda(\varepsilon^\varsigma,\beta_i^\varsigma)\}$ is smaller than $\varepsilon^\varsigma$.
As $\varepsilon^\varsigma\in\nacc(e_\delta)\s\acc^+(\Gamma)$, let us pick $\gamma^\varsigma\in\Gamma$ with 
$\Lambda^\varsigma<\gamma^\varsigma<\varepsilon^\varsigma$.
Now, fix a large enough $\epsilon^\varsigma\in\nacc(e_{\gamma^\varsigma})\s D\cap\delta$
to satisfy 
$\sup(e_{\gamma^\varsigma}\cap\epsilon^\varsigma)>\max\{\Lambda^\varsigma,\Lambda(\gamma^\varsigma,\varepsilon^\varsigma)\}$.
Denote $\alpha^\varsigma:=\last{\gamma^\varsigma}{\varepsilon^\varsigma}$.

By the pigeonhole principle, let us fix $\epsilon\in D\cap\delta$,
and $\eta\le\epsilon$ for which
$$\Sigma:=\{\varsigma<\kappa\mid \epsilon^\varsigma=\epsilon\ \&\ \sup(C_{\alpha^\varsigma}\cap\epsilon^\varsigma)=\eta\}$$
is cofinal in $\kappa$.
We already know that $\epsilon\in D$; we shall later show that $\eta<\epsilon$.

To see that $\epsilon\in S_{\eta}$, let $\varsigma<\kappa$ be arbitrary. 
By increasing $\varsigma$, we may assume that $\varsigma\in\Sigma$.
Let $i<\mu$ and $\beta\in x_{\beta^\varsigma_i}$ be arbitrary. We shall show that:
\begin{enumerate}
\item[(i')] $\tr(\epsilon,\beta)=\tr(\beta_i^\varsigma,\beta){}^\smallfrown\tr(\epsilon,\beta_i^\varsigma)$;
\item[(ii')] $\lambda(\epsilon,\beta)=\eta$;
\item[(iii')]  $\rho_2(\epsilon,\beta)=\eta_{\epsilon,\beta}$.
\end{enumerate}

We have:
$$\max\{\lambda(\beta_i^\varsigma,\beta),\lambda(\varepsilon^\varsigma,\beta_i^\varsigma)\}\le\max\{\Lambda^\varsigma,\Lambda(\gamma^\varsigma,\varepsilon^\varsigma)\}<\epsilon<\gamma^\varsigma<\varepsilon^\varsigma<\delta<\beta_i^\varsigma<\beta.$$
It thus follows from Fact~\ref{fact2} that Clause~(i') is satisfied,
so that  $i\in\im(\tr_h(\epsilon,\beta))$.
It also follows from Fact~\ref{fact2} that $\tr(\epsilon,\beta_i^\varsigma)=\tr(\varepsilon^\varsigma,\beta_i^\varsigma){}^\smallfrown\tr(\epsilon,\varepsilon^\varsigma)$.
In addition, by Clauses (1) and (3) of Lemma~\ref{nacclemma},
$\tr(\epsilon,\varepsilon^\varsigma)=\tr(\alpha^\varsigma,\varepsilon^\varsigma){}^\smallfrown\tr(\epsilon,\alpha^\varsigma)$.
Thus, altogether:
$$\tr(\epsilon,\beta)=\tr(\beta_i^\varsigma,\beta){}^\smallfrown\tr(\varepsilon^\varsigma,\beta_i^\varsigma){}^\smallfrown\tr(\alpha^\varsigma,\varepsilon^\varsigma){}^\smallfrown\tr(\epsilon,\alpha^\varsigma).$$
As $\epsilon$ is an element of $\nacc(e_{\gamma^\varsigma})$ above $\Lambda(\gamma^\varsigma,\varepsilon^\varsigma)\ge\sup(e_{\gamma^\varsigma}\setminus C_{\alpha^\varsigma})$,
we infer from Lemma~\ref{nacclemma}(1) that $\epsilon\in\nacc(C_{\alpha^\varsigma})$ and hence $\lambda(\epsilon,\alpha^\varsigma)=\sup(C_{\alpha^\varsigma}\cap\epsilon)$.
As $\epsilon=\epsilon^\varsigma$, it follows from Lemma~\ref{nacclemma}(2) that
$$\begin{array}{lll}\max\{\lambda(\beta_i^\varsigma,\beta),\lambda(\varepsilon^\varsigma,\beta_i^\varsigma),\lambda(\alpha^\varsigma,\varepsilon^\varsigma)\}&\le&\max\{\Lambda^\varsigma,\Lambda(\gamma^\varsigma,\varepsilon^\varsigma)\}\\
&<&\sup(e_{\gamma^\varsigma}\cap\epsilon)\\
&\le&\sup(C_{\alpha^\varsigma}\cap\epsilon)\\
&=&\eta.\end{array}$$
Altogether, $\lambda(\epsilon,\beta)=\sup(C_{\alpha^\varsigma}\cap\epsilon)=\eta$ and $\rho_2(\epsilon,\beta)=\eta_{\epsilon,\beta}$.
In addition, since $\eta=\sup(C_{\alpha^\varsigma}\cap\epsilon)$ and $\epsilon\in\nacc(C_{\alpha^\varsigma})$,
we infer that $\eta<\epsilon$, as promised.
\end{proof}

Let $\eta$ be given by the preceding lemma.
Let $D$ be a club in $\kappa$ such that, for all $\delta\in D$, there exists $M_\delta\prec\mathcal H_{\kappa^{+}}$ containing 
the parameter $p:=\{S_\eta,\langle\vec{x^i}\mid i<\mu\rangle,\vec C,h\}$
and satisfying $M_\delta\cap\kappa=\delta$.
For every $j<\mu$, since $H_j$ is in $\mathcal I^+$, 
the set $\Delta_j:=\{\delta\in S^1\mid \sup(\nacc(e_\delta)\cap D\cap H_j)=\delta\}$ is stationary.
Finally, let $$S^*:=S_\eta\cap\bigcap\nolimits_{j<\mu}\acc^+(\Delta_j).$$

\begin{lemma}\label{lemmawithill}
Let $(\tau^*,\alpha^*,\beta^*)\in\mu\circledast S^*\circledast S^*$. There exists $(a,b)\in[\mathcal A]^2$ such that $\mathbf t[a\times b]=\{(\tau^*,\alpha^*,\beta^*)\}$.
\end{lemma}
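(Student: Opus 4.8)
The plan is to reverse‑engineer the construction witnessing that $S_\eta$ is stationary, and route the resulting walks through the oscillation oracle $d\colon{}^{<\omega}\mu\to\omega\times\mu\times\mu\times\mu$ of Fact~\ref{pl6}. Since $\beta^*\in S^*\s S_\eta$, the first step is to apply the defining property of $S_\eta$ with threshold $\varsigma:=\beta^*+1$, producing a sequence $\langle\beta_i\mid i<\mu\rangle\in\prod_{i<\mu}\bar H_i\setminus(\beta^*+1)$ such that for all $i<\mu$ and $\beta\in x_{\beta_i}$ one has $i\in\im(\tr_h(\beta^*,\beta))$, $\lambda(\beta^*,\beta)=\eta$, and $\rho_2(\beta^*,\beta)=\eta_{\beta^*,\beta}$. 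Writing $b_i:=x_{\beta_i}$, these are the ``right halves'' of our walks; the first clause already secures clause~(3) of Definition~\ref{defpl6} for $u_i:=\{\tr_h(\beta^*,\beta)\mid\beta\in b_i\}$. Note $\eta\le\alpha^*<\beta^*$.

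Next, for each $j<\mu$ I would manufacture a matching ``left half'' lying far below $\beta^*$. Using $\beta^*\in\acc^+(\Delta_j)$, choose $\delta_j\in\Delta_j\cap\beta^*$ with $\delta_j>\alpha^*$; then $(\delta_j,\beta^*)\in S^1\circledast\kappa$, so $\Lambda(\delta_j,\beta^*)<\delta_j$, and $\nacc(e_{\delta_j})\cap D\cap H_j$ is cofinal in $\delta_j$. Since $\langle\cf(\varepsilon)\mid\varepsilon\in\nacc(e_{\delta_j})\rangle$ is strictly increasing, this set meets the limit ordinals cofinally in $\delta_j$, so I would pick a \emph{limit} ordinal $\varepsilon_j$ in it with $\varepsilon_j>\max\{\mu,\alpha^*,\Lambda(\delta_j,\beta^*)\}$. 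By Clauses (1) and (2) of Lemma~\ref{nacclemma}, $\varepsilon_j\in\nacc(C_{\last{\delta_j}{\beta^*}})$ and $\lambda(\varepsilon_j,\beta^*)<\varepsilon_j$, so $\varsigma_j:=\max\{\alpha^*,\lambda(\varepsilon_j,\beta^*)\}+1<\varepsilon_j$ as $\varepsilon_j$ is a limit. As $\varepsilon_j\in D$, fix $M_{\varepsilon_j}\prec\mathcal H_{\kappa^+}$ with $M_{\varepsilon_j}\cap\kappa=\varepsilon_j$ and $p\in M_{\varepsilon_j}$; since $\eta\le\alpha^*<\varepsilon_j$ and $\varsigma_j<\varepsilon_j$, all parameters of the definition of $S_\eta$ (in particular $\eta$, $\alpha^*$, $\varsigma_j$) belong to $M_{\varepsilon_j}$, and elementarity applied to ``$\alpha^*\in S_\eta$'' yields, reflected into $M_{\varepsilon_j}$, an $\alpha_j\in\bar H_0$ with $\varsigma_j\le\alpha_j$, $\sup(x_{\alpha_j})<\varepsilon_j$, and $\lambda(\alpha^*,\alpha)=\eta$, $\rho_2(\alpha^*,\alpha)=\eta_{\alpha^*,\alpha}$ for all $\alpha\in x_{\alpha_j}$. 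Put $a_j:=x_{\alpha_j}$.

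With these choices, for $(i,j)\in[\mu]^2$, $\alpha\in a_j$, $\beta\in b_i$ one has the chain
$$\eta\le\alpha^*<\varsigma_j\le\alpha_j<\alpha<\varepsilon_j<\delta_j<\beta^*<\beta_i<\beta,$$
and two applications of Fact~\ref{fact2} factor $\tr_h(\alpha,\beta)$ as $\tr_h(\beta^*,\beta){}^\smallfrown\tr_h(\varepsilon_j,\beta^*){}^\smallfrown\tr_h(\alpha,\varepsilon_j)$. Setting $v_j:=\{\tr_h(\alpha,\beta^*)\mid\alpha\in a_j\}$ and $\sigma_j:=\tr_h(\varepsilon_j,\beta^*)$, the membership $\varepsilon_j\in H_j$ gives clause~(4) of Definition~\ref{defpl6} and the bounds $|u_i|,|v_j|<\chi$ give clause~(2); so Fact~\ref{pl6}, invoked with $\gamma^*:=\tau^*$ (legitimate since $\tau^*<\mu$), returns $(i,j)\in[\mu]^2$ with $d(\varrho{}^\smallfrown\sigma)=(\ell(\varrho),i,j,\tau^*)$ for all $\varrho\in u_i$, $\sigma\in v_j$. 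I then set $a:=a_j$, $b:=b_i$ — a legitimate element of $[\mathcal A]^2$ — and for $\alpha\in a$, $\beta\in b$ verify that the four clauses defining $\mathbf t(\alpha,\beta)$ fire: taking $\varrho:=\tr_h(\beta^*,\beta)$, $\sigma:=\tr_h(\alpha,\beta^*)$ one gets $n:=\ell(\varrho)=\rho_2(\beta^*,\beta)$ and $\Tr(\alpha,\beta)(n)=\beta^*>\alpha$; $\lambda(\beta^*,\beta)=\eta$ with $\eta+1\le\varsigma_j\le\alpha_j<\alpha$; the factorization $\tr(\eta+1,\alpha)=\tr(\alpha^*,\alpha){}^\smallfrown\tr(\eta+1,\alpha^*)$ — valid as $\lambda(\alpha^*,\alpha)=\eta<\eta+1<\alpha^*<\alpha$ — together with $\rho_2(\alpha^*,\alpha)=\eta_{\alpha^*,\alpha}$ forces $\Tr(\eta+1,\alpha)(\eta_{\eta+1,\alpha})=\alpha^*$; and $\tau^*<\alpha^*$. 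Hence $\mathbf t(\alpha,\beta)=(\tau^*,\alpha^*,\beta^*)$, as required.

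The step needing most care is securing \emph{room for the elementary‑submodel argument}: $\varepsilon_j$ must be a limit ordinal so that $\varsigma_j<\varepsilon_j$, which is exactly what lets $M_{\varepsilon_j}$ produce $\alpha_j\in[\varsigma_j,\varepsilon_j)$ with $\sup(x_{\alpha_j})<\varepsilon_j$; the availability of such a limit $\varepsilon_j$ hinges on the strict monotonicity of cofinalities along $\nacc(e_{\delta_j})$ built into $\vec e$. A minor point is the inequality $\eta+1<\alpha^*$, which holds for $\alpha^*\in S^*$ because members of $\acc^+(\Delta_j)$ are limit ordinals and $S^*$ sits above $\eta$, exactly as at the analogous step of Lemma~\ref{wrappingup}. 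Everything else — the displayed chain and the routine appeals to Fact~\ref{fact2} — parallels the proof of Lemma~\ref{wrappingup}.
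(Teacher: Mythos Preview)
Your proof is correct and follows essentially the same route as the paper's. The only cosmetic differences are that you take $\alpha_j\in\bar H_0$ while the paper takes $\alpha_j\in\bar H_j$ (either works, since clause~(i) of the $S_\eta$ definition is not used for the $\alpha$-side), and your $\varsigma_j$ omits the term $\Lambda(\delta_j,\beta^*)$ that the paper includes --- but this is harmless since $\Lambda(\delta_j,\beta^*)\le\sup(e_{\delta_j}\cap\varepsilon_j)\le\lambda(\varepsilon_j,\beta^*)$ by Lemma~\ref{nacclemma}(2), so the two maxima coincide. Your explicit care that $\varepsilon_j$ be a limit ordinal is in fact automatic (since $\cf(\varepsilon_j)>\cf(\Lambda(\delta_j,\beta^*))>\cf(\last{\delta_j}{\beta^*})\ge\sigma^1$ by the strict monotonicity of cofinalities along $\nacc(e_{\delta_j})$ and Lemma~\ref{nacclemma}(4)), but there is no harm in stating it.
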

\begin{proof} As $\beta^*\in S^*\s S_\eta$, let us fix a sequence $\langle \beta_i\mid i<\mu\rangle\in\prod_{i<\mu}\bar H_i\setminus(\beta^*+1)$ 
such that, for all $i<\mu$ and $\beta\in x_{\beta_i}$:
\begin{enumerate}
\item $i\in\im(\tr_h(\beta^*,\beta))$;
\item $\lambda(\beta^*,\beta)=\eta$;
\item $\rho_2(\beta^*,\beta)=\eta_{\beta^*,\beta}$.
\end{enumerate}

For all $j<\mu$, as $\beta^*\in\acc^+(\Delta_j)$, we may pick $\delta_j\in \Delta_j\cap \beta^*$ above $\alpha^*$,
so that $\delta_j>\alpha^*>\eta$.
Now, pick $\varepsilon_j\in\nacc(e_{\delta_j})\cap D\cap H_j$ above $\max\{\alpha^*,\Lambda(\delta_j,\beta^*)\}$.
As $M_{\varepsilon_j}$ contains $p$, we have that $S_\eta\in M_{\varepsilon_j}$.
Now, by Clauses (1) and (2) of Lemma~\ref{nacclemma},
$\varsigma_j:=\max\{\alpha^*,\Lambda(\delta_j,\beta^*),\lambda(\varepsilon_j,\beta^*)\}+1$ is smaller than $\varepsilon_j$.
Since $\alpha^*\in M_{\varepsilon_j}\cap S_\eta$, we may then find $\alpha_j\in M_{\varepsilon_j}\cap\bar H_j\setminus\varsigma_j$ such that, for all $\alpha\in x_{\alpha_j}$:
\begin{enumerate}
\item[(2')] $\lambda(\alpha^*,\alpha)=\eta$;
\item[(3')] $\rho_2(\alpha^*,\alpha)=\eta_{\alpha^*,\alpha}$.
\end{enumerate}

Note that from $\alpha_j\in M_{\varepsilon_j}$, it follows that $\sup(x_{\alpha_j})<\varepsilon_j$. 
Write  $a_j:=x_{\alpha_j}$ and $b_i:=x_{\beta_i}$.
Fix arbitrary $(i,j)\in[\mu]^2$ and $(\alpha,\beta)\in a_j\times b_i$. Then:
$$\eta+1<\alpha^*\le\max\{\alpha^*,\lambda(\beta^*,\beta),\lambda(\varepsilon_j,\beta^*)\}<\varsigma_j\le\alpha_j<\alpha<\varepsilon_j<\beta^*<\beta_i<\beta.$$

So, by Fact~\ref{fact2}:
$$\tr(\alpha,\beta)=\tr(\beta^*,\beta){}^\smallfrown\tr(\varepsilon_j,\beta^*){}^\smallfrown \tr(\alpha,\varepsilon_j).$$

For each $i<\mu$, set $u_i:=\{ \tr_h(\beta^*,\beta)\mid \beta\in b_i\}$.
By Clause~(1) above, $i\in\im(\varrho)$ for all $\varrho\in u_i$.
For each $j<\mu$, set $v_j:=\{ \tr_h(\alpha,\beta^*)\mid\alpha\in a_j\}$
and $\sigma_j:=\tr_h(\varepsilon_j,\beta^*)$.
As $\varepsilon_j\in H_j$, we infer that  $\sigma_j{}^\smallfrown\langle j\rangle\sqsubseteq\sigma$ for all $\sigma\in v_j$.

Finally, by the choice of $d$, fix $(i,j)\in[\mu]^2$ 
such that $d(\varrho{}^\smallfrown\sigma)=(\ell(\varrho),i,j,\tau^*)$ for all $\varrho\in u_i$ and $\sigma\in v_j$.
Set $a:=a_j$ and $b:=b_i$, so that $(a,b)\in[\mathcal A]^2$.

To see that $\mathbf t[a\times b]=\{(\tau^*,\alpha^*,\beta^*)\}$, fix arbitrary $\alpha\in a$ and $\beta\in b$.
Denote $\varrho:=\tr_h(\beta^*,\beta)$ and $\sigma:=\tr_h(\alpha,\beta^*)$, so that $\varrho\in u_i$ and $\sigma\in v_j$.
Denote  $(n,i',j',\tau):=d(\tr_h(\alpha,\beta))$.
Then:
\begin{itemize}
\item $\Tr(\alpha,\beta)(n)=\Tr(\alpha,\beta)(\rho_2(\beta^*,\beta))=\beta^*$;
\item $\tau=\tau^*$;
\item $\eta=\lambda(\beta^*,\beta)$ and $\eta+1<\alpha$;
\item $\tau^*<\mu<\alpha^*$.
\end{itemize}

Now, since $\lambda(\alpha^*,\alpha)=\eta<\eta+1<\alpha^*<\alpha$,
$\tr(\eta+1,\alpha)=\tr(\alpha^*,\alpha){}^\smallfrown\tr(\eta+1,\alpha^*)$.
So, since $\rho_2(\alpha^*,\alpha)=\eta_{\alpha^*,\alpha}$,
 $\rho_2(\alpha^*,\alpha)=\eta_{\eta+1,\alpha}$ and 
$\alpha^*=\Tr(\eta+1,\alpha)(\eta_{\eta+1,\alpha})$.
\end{proof}

\section{Acknowledgements}
The first author is partially supported by the European Research Council (grant agreement ERC-2018-StG 802756) and by the Israel Science Foundation (grant agreement 2066/18).
The second author is supported by the Foreign Postdoctoral Fellowship Program of the Israel Academy of Sciences and Humanities and by the Israel Science Foundation (grant agreement 2066/18).

The results of this paper were presented by the first author in a talk at the Oberseminar Mengenlehre at Universit\"at M\"unster, January 2020. 
He thanks the hosts for the warm hospitality and the participants of the seminar for their feedback.

The authors are grateful to the anonymous referee for a thorough reading of the paper, and for providing a thoughtful list of corrections.


\begin{thebibliography}{Rin14b}

\bibitem[BR19]{paper29}
Ari~Meir Brodsky and Assaf Rinot.
\newblock Distributive {A}ronszajn trees.
\newblock {\em Fund. Math.}, 245(3):217--291, 2019.

\bibitem[Eis13a]{MR3087058}
Todd Eisworth.
\newblock Getting more colors {I}.
\newblock {\em J. Symbolic Logic}, 78(1):1--16, 2013.

\bibitem[Eis13b]{MR3087059}
Todd Eisworth.
\newblock Getting more colors {II}.
\newblock {\em J. Symbolic Logic}, 78(1):17--38, 2013.

\bibitem[FR17]{paper27}
David {Fernandez-Breton} and Assaf Rinot.
\newblock Strong failures of higher analogs of {H}indman's theorem.
\newblock {\em Trans. Amer. Math. Soc.}, 369(12):8939--8966, 2017.

\bibitem[Hin74]{MR0349574}
Neil Hindman.
\newblock Finite sums from sequences within cells of a partition of {$N$}.
\newblock {\em J. Combinatorial Theory Ser. A}, 17:1--11, 1974.

\bibitem[HLS17]{MR3696151}
Neil Hindman, Imre Leader, and Dona Strauss.
\newblock Pairwise sums in colourings of the reals.
\newblock {\em Abh. Math. Semin. Univ. Hambg.}, 87(2):275--287, 2017.

\bibitem[Hof13]{MR3321938}
Douglas~J. Hoffman.
\newblock {\em A {C}oloring {T}heorem for {I}naccessible {C}ardinals}.
\newblock ProQuest LLC, Ann Arbor, MI, 2013.
\newblock Thesis (Ph.D.)--Ohio University.

\bibitem[IR21]{paper47}
Tanmay Inamdar and Assaf Rinot.
\newblock Was {U}lam right?
\newblock \verb"http://assafrinot.com/paper/47", 2021.
\newblock In preparation.

\bibitem[LHR21]{paper35}
Chris Lambie-Hanson and Assaf Rinot.
\newblock Knaster and friends {II}: {T}he {C}-sequence number.
\newblock {\em J. Math. Log.}, 21(1):2150002, 54, 2021.

\bibitem[Ram30]{ramsey}
F.P. Ramsey.
\newblock On a problem of formal logic.
\newblock {\em Proc. London Math. Soc.}, pages 264--286, 1930.

\bibitem[Rin12]{paper13}
Assaf Rinot.
\newblock Transforming rectangles into squares, with applications to strong
  colorings.
\newblock {\em Adv. Math.}, 231(2):1085--1099, 2012.

\bibitem[Rin14a]{paper18}
Assaf Rinot.
\newblock Chain conditions of products, and weakly compact cardinals.
\newblock {\em Bull. Symb. Log.}, 20(3):293--314, 2014.

\bibitem[Rin14b]{paper15}
Assaf Rinot.
\newblock Complicated colorings.
\newblock {\em Math. Res. Lett.}, 21(6):1367--1388, 2014.

\bibitem[Rin17]{paper24}
Assaf Rinot.
\newblock Higher {S}ouslin trees and the {GCH}, revisited.
\newblock {\em Adv. Math.}, 311(C):510--531, 2017.

\bibitem[RT13]{paper14}
Assaf Rinot and Stevo Todorcevic.
\newblock Rectangular square-bracket operation for successor of regular
  cardinals.
\newblock {\em Fund. Math.}, 220(2):119--128, 2013.

\bibitem[RZ21]{paper45}
Assaf Rinot and Jing Zhang.
\newblock Strongest transformations.
\newblock \verb"http://assafrinot.com/paper/45", 2021.
\newblock In preparation.

\bibitem[She88]{shelah_productivity}
Saharon Shelah.
\newblock Successors of singulars, cofinalities of reduced products of
  cardinals and productivity of chain conditions.
\newblock {\em Israel J. Math.}, 62(2):213--256, 1988.

\bibitem[She94]{Sh:365}
Saharon Shelah.
\newblock There are {J}onsson algebras in many inaccessible cardinals.
\newblock In {\em Cardinal Arithmetic}, volume~29 of {\em Oxford Logic Guides}.
  Oxford University Press, 1994.

\bibitem[She97]{Sh:572}
Saharon Shelah.
\newblock Colouring and non-productivity of $\aleph_2$-cc.
\newblock {\em Annals of Pure and Applied Logic}, 84:153--174, 1997.


\bibitem[Sie33]{MR1556708}
Waclaw Sierpi{\'n}ski.
\newblock Sur un probl\`eme de la th\'eorie des relations.
\newblock {\em Ann. Scuola Norm. Sup. Pisa Cl. Sci. (2)}, 2(3):285--287, 1933.

\bibitem[SS10]{soukup2010club}
Daniel Soukup and Lajos Soukup.
\newblock Club guessing for dummies.
\newblock {\em arXiv preprint arXiv:1003.4670}, 2010.

\bibitem[Tod87]{TodActa}
Stevo Todor{\v{c}}evi{\'c}.
\newblock Partitioning pairs of countable ordinals.
\newblock {\em Acta Math.}, 159(3-4):261--294, 1987.

\bibitem[Tod07]{TodWalks}
Stevo Todorcevic.
\newblock {\em Walks on ordinals and their characteristics}, volume 263 of {\em
  Progress in Mathematics}.
\newblock Birkh\"auser Verlag, Basel, 2007.

\end{thebibliography}
\end{document}